\newtheorem{theorem}{Theorem}[section]
\newtheorem{corollary}[theorem]{Corollary}
\newtheorem{definition}[theorem]{Definition}
\newtheorem{lemma}[theorem]{Lemma}
\newtheorem{proposition}[theorem]{Proposition}
\newtheorem{remark}[theorem]{Remark}
\newenvironment{proof}[1][Proof]{\noindent \emph{#1.} }{\hfill \ 
\rule{0.5em}{0.5em}}
\makeatletter\@addtoreset{equation}{section}\makeatother
\makeatletter\@addtoreset{figure}{section}\makeatother
\makeatletter\@addtoreset{table}{section}\makeatother
\begin{document}

\title{Block circulant and Toeplitz structures in the linearized  
Hartree-Fock equation on finite lattices: tensor approach}

\author{V. Khoromskaia \thanks{Max-Planck-Institute for
        Mathematics in the Sciences, Inselstr.~22-26, D-04103 Leipzig,
        Germany  ({\tt vekh@mis.mpg.de}); 
        Max Planck Institute for Dynamics of Complex Systems, Magdeburg, Germany.} \and
        B. N. Khoromskij \thanks{Max-Planck-Institute for
        Mathematics in the Sciences, Inselstr.~22-26, D-04103 Leipzig ({\tt bokh@mis.mpg.de}); 
        Max Planck Institute for Dynamics of Complex Systems, Magdeburg.}
        }
 
\date{}

\maketitle

\begin{abstract}
This paper introduces and analyses the new grid-based tensor approach
to approximate solution of the elliptic eigenvalue problem
for the 3D lattice-structured systems.
We consider the linearized Hartree-Fock equation over a spatial 
$L_1\times L_2\times L_3$ lattice for both periodic and non-periodic problem setting, 
discretized in the localized Gaussian-type orbitals basis.
In the periodic case, the Galerkin system matrix obeys a
three-level block-circulant structure that allows the FFT-based diagonalization, while
for the finite extended systems in a box (Dirichlet boundary conditions)
we arrive at the perturbed block-Toeplitz representation providing fast
matrix-vector multiplication and low storage size.
The proposed  grid-based tensor techniques manifest the twofold benefits:
(a) the entries of the Fock matrix are computed by  1D operations using
low-rank tensors represented on a 3D grid, (b) in the periodic case the low-rank tensor
structure in the diagonal blocks of the Fock matrix in the Fourier space
reduces the conventional 3D FFT to the product of 1D FFTs.
Lattice type systems in a box with Dirichlet boundary conditions
are treated numerically by our previous tensor solver for single
molecules, which makes possible calculations on rather 
large $L_1\times L_2\times L_3$ lattices due to reduced numerical cost for 3D problems.
The numerical simulations for  both box-type and periodic $L\times 1\times 1$
lattice chain in a 3D rectangular ``tube'' with $L$ up to several hundred
confirm the theoretical complexity bounds for the block-structured 
eigenvalue solvers in the limit of large $L$.
\end{abstract}

\noindent\emph{AMS Subject Classification:}\textit{ } 65F30, 65F50, 65N35, 65F10

\noindent\emph{Key words:}  Tensor structured numerical methods for PDEs, 
3D grid-based tensor approximation, Hartree-Fock equation, linearized Fock operator, 
periodic systems, lattice sum of potentials, 
block circulant/Toeplitz matrix, fast Fourier transform.

\section{Introduction}\label{sec:introduct}

 Efficient numerical simulation of lattice systems for both periodic and non-periodic
settings in application to crystalline, metallic, polymer-type compounds, and nano-structures
is one of the challenging tasks in computational quantum chemistry.  
 The reformulation of the nonlinear Hartree-Fock equation for periodic
molecular systems based on the Bloch theory \cite{Bloch:1925}
has been addressed in the literature for more than forty years ago,
and nowadays there are several implementations mostly relying on the analytic 
treatment of arising integral operators \cite{CRYSTAL:2000,CRYSCOR:12,GAUSS:09}.
Mathematical analysis of spectral problems for PDEs with the
periodic-type coefficients was an attractive topic in the recent decade, see
\cite{CancesDeLe:08,OrtnEhrl:13,LuOrtnerVK:2013} and the references therein. 
However, the systematic  optimization of the 
basic numerical algorithms in the {\it ab initio} Hartree-Fock calculations for 
large lattice structured compounds with perturbed periodicity are largely unexplored.
%still needs development. %are largely unexplored. 

%\cb
The real space methods for single molecules based on 
the locally adaptive grids and multiresolution techniques have been discussed
in \cite{HaFaYaBeyl:04,SaadRev:10,Frediani:13,CanEhrMad:2012}. 
The grid-based tensor-structured approach
for solving the Hartree-Fock nonlinear spectral problem approximated
in the basis set of localized Gaussian-type-orbitals (GTO) has been developed
and proved to be efficient for moderate size molecular systems
 \cite{VKH_solver:13,KhorSurv:10,VeBoKh:Ewald:14}.

This paper presents the grid-based tensor approach  to the solution of elliptic eigenvalue problem 
for the 3D lattice-structured systems in a bounding box. 
We focus on the basic application to the linearized Hartree-Fock equation for 
extended systems composed of atoms or molecules located at nodes
of a $L_1\times L_2\times L_3$ finite lattice, 
for both open boundary conditions and periodic supercell.
The latter is useful because the structure of the respective Galerkin matrix 
(i.e., the Fock matrix) in the presence of defects can be treated as a small (local) 
perturbation to an ideally periodic system.
We consider the 3D model eigenvalue problem for the Fock operator confined to the core
Hamiltonian part, composed of the 3D Laplacian and the nuclear potential operator
describing the Coulomb interaction of electrons and nuclei, 
which requires a sum of the total electrostatic potential of nuclei in the considered
extended system. This is the typical example of a nontrivial elliptic eigenvalue problem
arising in the numerical modeling of electronic structure in large almost periodic 
molecular systems. We observed in numerical experiments that there is an irreducible 
difference  between the spectral data for periodic and non-periodic settings.

Computation of 3D lattice sums of a large number of long-distance Coulomb interaction 
potentials is one of the severe difficulties in the Hartree-Fock calculations for 
lattice-structured periodic or box-restricted systems. 
Traditionally this problem was treated by the so-called Ewald-type summation 
techniques \cite{Ewald:27,DYP:93} combined with the fast multipole expansion or/and FFT methods 
\cite{RochGreen:87,SundLos:17}, which
scale as $O(L^3 \log L)$, $L=\max\{L_1,L_2,L_3\}$, for both periodic and box-type lattice sums.
In this paper we apply the new, recently introduced method for summation of long-range 
potentials on lattices \cite{VeBoKh:Ewald:14,VeKhorEwTuck_NLLA:15}  
by using the assembled rank-structured
tensor decomposition. This approach reduces the cost of  summation for 
$L \times L \times L$ lattices to linear scaling in $L$, i.e. $O(L)$.

In the presented approach the Fock matrix is calculated directly by grid-based tensor 
numerical operations in the basis set of localized 
Gaussian-type-orbitals\footnote{GTO basis can be viewed as the special reduced basis 
constructed on the base of physical insight.} (GTO) first
specified by $m_0$ elements in the unit cell and then finitely replicated on 3D  
extended lattice structure in a box \cite{VKH_solver:13,VeKhorTromsoe:15}.
For numerical integration by using low-rank tensor formats all basis
functions are represented on the fine rectangular grid covering the whole computational box,
where we introduce either the Dirichlet or periodic boundary conditions. 
%The latter case is usually called the periodic supercell.

We show that in the case of finite lattices in a box 
(the Dirichlet boundary conditions) the core Hamiltonian exhibits
the $C(2d+1)$-diagonal block sparsity, see Lemma 2.1. In particular both the discrete 
Laplacian and the mass matrix reveal the block-Toeplitz structure. 
The nuclear potential operator can be constructed, in general, as for the large single molecule
in a box or by replication from the central unit cell to the whole lattice.
In the latter case we arrive at the block-Toeplitz structure which allows
the fast FFT-based matrix vector product.

For periodic boundary conditions (periodic supercell)
we do not impose explicitly the periodicity-like features of the 
solution by means of the approximation ansatz that is the common approach in the Bloch formalism.
Instead, the periodic properties of the considered system appear implicitly through the 
Toeplitz or circulant block structures in the Fock matrix. 
In case of periodic supercell the Fock matrix is proved to inherit 
the $d$-level symmetric block circulant form, that allows its  
diagonalization in the Fourier basis \cite{KaiSay_book:99,Davis} at the expense
$O(m_0^2 L^d \log L)$, $d=1,2,3$, see Lemma 3.4.
In the case of $d$-dimensional lattice, the weak overlap between lattice 
translated basis functions leads to banded block sparsity thus reducing the storage 
cost. 
%to $O(m_0^2 L)$.
% while the  traditional  FFT-based diagonalization procedure amounts to $O(m_0^2 L^d \log L)$ operations. 
% Further saving in storage can be achieved by employing the block sparsity in the Fock matrix 
% assuming the weak overlap between lattice translated basis functions. 
Furthermore, we introduce the low-rank tensor structure to the diagonal blocks of 
the Fock matrix represented in the Fourier space which allows to reduce the numerical
cost to handle the block-circulant Galerkin matrix to linear 
scaling in $L$, $O(m_0^2 L \log L)$, see Theorem 3.3.

% Imposing the low-rank tensor structure into the diagonal blocks of 
% the Fock matrix represented in the Fourier space, and using  
% its inherent block-circulant structure it becomes possible to reduce the numerical 
% costs to linear scaling in $L$, $O(m_0^2 L \log L)$. 
% We present numerical tests in the case of a rectangular 
% 3D ``tube'' of size $L\times 1\times 1$  with $L$ up to several hundred.

% In the new approach one can potentially benefit from the 
% additional flexibility that allows to treat  
% slightly perturbed periodic systems in the algebraic framework
% within the discretization scheme. 
% Such situations may arise, for example, in the case of finite
% extended systems in a box (open boundary conditions) considered in this  paper,  
% or for slightly perturbed periodic compounds, say for nearly-periodic systems with 
% vacancies or impurities \cite{VeKhorEwTuck_NLLA:15}. 

The presented numerical scheme can be further investigated in  
the framework of the reduced Hartree-Fock model \cite{CancesDeLe:08},
where the similar block-structure in the Coulomb term of the Fock matrix can be observed.
The Wannier-type basis functions constructed by 
the lattice translation of the localized molecular orbitals precomputed 
on the reference unit cell, can be also adapted to this algebraic framework.

The arising block-structured matrix representing the discretized 
core Hamiltonian, as well as some auxiliary function-related tensors arising, 
can be considered for further optimization by imposing the low-rank tensor formats, 
and in particular, 
the quantics-TT (QTT) tensor approximation \cite{KhQuant:09} of long vectors and large
matrices, 
which especially benefits in the limiting case  of large $L\times L \times L$
perturbed periodic systems. 
%We apply efficient $O(\log n)$ quantics method for 
In the QTT approach the algebraic operations on the 
3D $n\times n\times n$ representation Cartesian grid can be implemented with 
logarithmic cost $O(\log n)$.
Literature surveys on tensor algebra and rank-structured tensor methods for multi-dimensional
PDEs can be found in \cite{Kolda,KhorSurv:10,VeKhorTromsoe:15,CichOseletc:2016}, 
see also 
\cite{HaKhSaTy:08,GavlKh_Caley:11,LinLinetc_SelInv:11,DoKhSavOs_mEIG:13,BeKhKh_BSE:15,BeDoKh2_BSE2:16} 
and \cite{VeKhorTromsoe:15,RahOsel:16,RahOsel:16_2}
concerning the low-rank decompositions in eigenvalue and electronic structure calculations.
The present paper represents the revised and essentially extended version of the previous preprint 
\cite{VeKhorCorePeriod:14}.

Notice that in the recent years the analysis of eigenvalue problem solvers for  
large structured matrices has been widely discussed in the linear algebra community 
\cite{BuByMe:92,BeMeXu:97,BuFa:15,BeFaYa:15}.
Tensor structured approximation of elliptic equations with quasi--periodic coefficients
has been considered in \cite{BokhSRep:15,BokhSRep2:16}.

The rest of the paper is organized as follows. 
Section \ref{sec:core_H} includes the main results on the analysis of
core Hamiltonian on lattice structured compounds. 
In particular, \S\ref{Core_Hamil} describes the tensor-structured calculation of the 
core Hamiltonian for large lattice-type molecular/atomic systems.
We recall tensor-structured calculation of the Laplace operator and
fast summation of lattice potentials by assembled canonical tensors.
The complexity reduction due to low-rank tensor structures in the
matrix blocks is discussed, see Remark \ref{prop:low_rank_coef}). 
Section \ref{sec:Core_Ham_period_FFT} discusses in detail the block circulant 
structure of the core Hamiltonian and presents numerical illustrations for 
a rectangular 3D ``tube'' of size $L\times 1\times 1$  with $L$ for large $L$.
In particular, \S\ref{ssec:Tensor_bcirc} introduces the new
%recalls the main properties of the 
%multilevel block circulant matrices based on their diagonalization by FFT, and then 
block structures by imposing the low-rank factorizations within multi-indexed 
blocks of the diagonalized three-level block-circulant matrix. 
We present a number of numerical experiments illustrating the pollution effect on the spectrum
of periodic system compared with the system in a finite box. We also demonstrate 
the optimal performance for the direct FFT-based solver that implements the one-level
block-circulant matrix structure describing the $L \times 1 \times 1$ 
lattice systems for large $L$ (polymer-type compounds).
Appendix recalls the classical results on the properties of block circulant/Toeplitz matrices
and describes the basic tensor formats.

\section{Elliptic operators with lattice-structured potentials}
\label{sec:core_H}

In this section we  analyze the matrix structure of 
the Galerkin discretization for the elliptic eigenvalue problem in the form
\begin{equation}\label{eqn:EIGHFcore}
 {\cal H}\varphi({ x}) \equiv [- \Delta + v(x)]\varphi({ x}) =\lambda \varphi({ x}), \quad 
 x\in \Omega \subset \mathbb{R}^d, \;d=1,2,3,
\end{equation}
where the potential $v(x)$ is constructed
by replication of those in the rectangular unit cell $\Omega_0$ over a $d$-dimensional 
rectangular $L_1\times L_2\times L_3$ lattice in a box, such that  
$\varphi \in H^1_0(\Omega)$, 
or in a rectangular supercell $\Omega$ with periodic boundary conditions.
We focus on the important particular case of $v(x)= v_c(x)$ corresponding to
the core Hamiltonian part in the Fock operator that constitutes the
Hartree-Fock spectral problem arising in electronic structure calculations.
In this case the electrostatic potential $v_c(x)$ is obtained as the large
lattice sum of long-range interactions defined by the Newton kernel. 
% %(\ref{eqn:HFcore})
% %\[{\cal H}=-\frac{1}{2} \Delta + v_c,\]
% with respect to the localized GTO basis replicated over a lattice
% %$\{g_m (x) \}_{1\leq m \leq N_b}, x \in {\mathbb{R}^3}$ 
% in a box, or in a supercell with the priodic boundary conditions.
% %where $v_c(x)$ is given by (\ref{eqn:ElectrostPot}) and $\Delta$ is the 3D Laplacian.

\subsection{The Hartree-Fock core Hamiltonian in a GTO basis set}
\label{Core_Hamil}

The nonlinear Fock operator ${\cal F}$
in the governing Hartree-Fock eigenvalue problem,   % posed in $\mathbb{R}^3$,
describing the ground state energy for $2N_b$-electron system, is defined by
\[
\left[-\frac{1}{2} \Delta - v_c(x) + 
 \int_{\mathbb{R}^3} \frac{\rho({ y})}{\|{ x}-{ y}\|}\, d{ y}\right] \varphi_i({ x})
- \int_{\mathbb{R}^3} \; \frac{\tau({ x}, { y})}{\|{x} - { y}\|}\, \varphi_i({ y}) d{ y}
%{\cal F} \varphi_i({ x})
%\equiv (-\frac{1}{2} \Delta + v_c + V_H -{\cal K}) \varphi_i({ x})
 = \lambda_i \, \varphi_i({ x}), %\quad
 % \int_{\mathbb{R}^3}\varphi_i\varphi_j=\delta_{ij},
 \quad x\in \mathbb{R}^3,
\] 
where $i =1,...,N_{orb}$ and $\|\cdot \|$ means the distance function in $\mathbb{R}^3$.
The linear part in the Fock operator is presented by the core Hamiltonian 
\begin{equation}\label{eqn:HFcore}
{\cal H}=-\frac{1}{2} \Delta - v_c,
\end{equation}
while the nonlinear Hartree potential and exchange operators 
% \begin{equation}\label{eqn:Fock}
%  {\cal F} :=-\frac{1}{2} \Delta - v_c + 
%  \int_{\mathbb{R}^3} \frac{\rho({ y})}{\|{ x}-{ y}\|}\, d{ y}
% - \int_{\mathbb{R}^3} \; \frac{\tau({ x}, { y})}{\|{x} - { y}\|}\, (\cdot) d{ y},
% \end{equation}
depend on the unknown eigenfunctions (molecular orbitals)
comprising the electron density, $\rho({ y})= 2 \tau(y,y)$, and the density matrix, 
$
\tau(x,y) =\sum\limits^{N_{orb}}_{i=1} \varphi_i (x)\varphi_i (y),\quad x,y\in \mathbb{R}^3.
$ 
The electrostatic potential generated by the core Hamiltonian is defined by a sum
\begin{equation} \label{eqn:ElectrostPot}
%{\cal H}=-\frac{1}{2} \Delta + v_c,\quad 
v_c(x)= \sum_{\nu=1}^{M}\frac{Z_\nu}{\|{x} -a_\nu \|},\quad
Z_\nu >0, \;\; x,a_\nu\in \mathbb{R}^3,
\end{equation}
where $M$ is the total number of nuclei in the system, 
$a_\nu$,  $Z_\nu$, represent their Cartesian coordinates and the respective charge numbers.

% \[
% v_c(x)=- \sum_{\nu=1}^{M}\frac{Z_\nu}{\|{x} -a_\nu \|},\quad
% Z_\nu >0, \;\; x,a_\nu\in \mathbb{R}^3,
% \]

Given a set of localized GTO basis functions $\{g_\mu\}$ ($\mu=1,...,N_b$),
the occupied molecular orbitals $\psi_i$ are approximated in the form
 \begin{equation}\label{expand}
\psi_i=\sum\limits_{\mu=1}^{N_b} C_{\mu i} g_\mu, \quad i=1,...,N_{orb},
\end{equation}
with the unknown coefficients matrix
$C=[C_{ \mu i}] \in \mathbb{R}^{N_b \times  N_{orb}}$ obtained as the solution 
of the discretized Hartree-Fock equation with respect to the Galerkin basis $\{g_\mu\}$, 
and governed by $N_b\times N_b$ Fock matrix.
The stiffness matrix $H=[ h_{\mu \nu}]\in \mathbb{R}^{N_b\times N_b}$ of the core Hamiltonian
(\ref{eqn:HFcore}) is represented by the single-electron integrals, 
 \begin{equation}\label{eqn:Core_Ham}
h_{\mu \nu}= \frac{1}{2} \int_{\mathbb{R}^3}\nabla g_\mu \cdot \nabla g_\nu dx -
\int_{\mathbb{R}^3} v_c(x) g_\mu g_\nu dx, \quad 1\leq \mu, \nu \leq N_b,
\end{equation}
such that the resulting eigenvalue equations
governed by  the reduced Fock matrix, $H$, read 
% The resultant Galerkin system of nonlinear equations
% for the coefficients matrix $C\in \mathbb{R}^{N_b \times N_{orb}}$, 
% and the respective eigenvalues $\Lambda$, reads as
\begin{align*} \label{eqn:HF discr}
  H C &= SC \Lambda, \quad \Lambda= diag(\lambda_1,...,\lambda_{N_{orb}}), \\
  C^T SC   &=  I_N,   \nonumber
\end{align*}
where the mass (overlap) matrix $S=[s_{\mu \nu}]_{1\leq \mu, \nu \leq N_b}$, is given by
$
s_{\mu \nu}=\int_{\mathbb{R}^3} g_\mu g_\nu  dx.
$

In the case of $L\times L\times L$ lattice system in a box 
the number of basis functions scales cubically in $L$, $N_b = m_0 L^3$, hence
the evaluation of the Fock matrix and further computations may become 
prohibitive as $L$ increases ($m_0$ is the number of basis functions in the unit cell).
Moreover, the numerically extensive part in the matrix evaluation (\ref{eqn:Core_Ham}) 
is related to the integration with the large sum of lattice translated Newton kernels. 
Indeed, let $M_0$ be the number of nuclei in the unit cell, then
the expensive calculations are due to the summation over $M_0 L^3$ Newton kernels,  
and further spacial integration of this sum with the large set of localized atomic orbitals 
$\{g_\mu\}$, ($\mu=1,...,N_b$), where $N_b$ is of order $m_0 L^3$.

In what follows, we describe the grid-based tensor approach for the  
block-structured representation
of the core Hamiltonian in the Fock matrix for the lattice system in a box or 
in a periodic supercell. 
The main ingredients of the present approach include:

(a) the fast and accurate grid-based tensor method for evaluation 
of the electrostatic potential $v_c$ defined by  the lattice sum 
in (\ref{eqn:ElectrostPot}),  see \cite{VeBoKh:Ewald:14,VeKhorEwTuck_NLLA:15};

(b) fast computation of the entries in the stiffness matrix $V_c$,
 \begin{equation}\label{eqn:Core_Ham_V}
V_c=[v_{\mu \nu}]:\quad  v_{\mu \nu}= \int_{\mathbb{R}^3} v_c(x) g_\mu g_\nu dx, 
\quad 1\leq \mu, \nu \leq N_b,
\end{equation}
by numerical grid-based integration using the low-rank tensor representation 
of all functions involved,  

(c) block-structured factorized representation of the large and densely populated 
matrix $V_c$ in the form of perturbed multilevel block-circulant matrix, and

(d) block representation of the Galerkin matrix for the Laplacian.

% The present approach solves this problem by using the fast and accurate 
% grid-based tensor method for evaluation 
% of the electrostatic potential $v_c$ defined by  the lattice sum 
% in (\ref{eqn:ElectrostPot}),  see \cite{VeBoKh:Ewald:14}, 
% and subsequent efficient computation and structural representation of
% the stiffness matrix $V_c$,
% \[
% V_c=[V_{\mu \nu}]:\quad  V_{\mu \nu}= \int_{\mathbb{R}^3} v_c(x) g_\mu g_\nu dx, 
% \quad 1\leq \mu, \nu \leq N_b,
% \]
% by numerical integration by using the low-rank tensor representation on the grid
% of all functions involved.

The approach provides the opportunities to reduce computational costs in the case of
large $L\times L \times L$ lattice systems.
In the next sections, we show that in the periodic setting the resultant stiffness matrix 
$H=[h_{\mu \nu}]$ of the core Hamiltonian 
can be parametrized in the form of a symmetric, three-level block circulant matrix
that allows further structural improvements by introducing tensor factorizations of
the matrix blocks.
In the case of lattice system in a box the block structure of $H$ is obtained by 
a small perturbation of the block Toeplitz matrix.
These matrix structures allow the efficient storage and fast matrix-vector multiplication
within iterations on a subspace for solving partial eigenvalue problem.

\subsection{Nuclear potential operator for a single molecule} 
\label{ssec:nuclear}

In this paragraph, we describe the evaluation of the stiffness matrix $V_c$ by tensor operations.
It is based on the low-rank separable approximation to the 
nuclear (core) potential $v_c(x)$ representing the Coulomb interaction 
of the electrons with the nuclei, see (\ref{eqn:ElectrostPot}). 
% \begin{equation}\label{eq Vc}
%  V_c(x)= - \sum_{\nu=1}^{M}\frac{Z_\nu}{\|{x} -a_\nu \|},\quad
% Z_\nu >0, \;\; a_\nu\in \mathbb{R}^3.
% \end{equation}
%where $M$ is the number of nuclei, and $a_\nu$,  $Z_\nu$, represent the corresponding coordinates and charge numbers.

In the case $L=1$ we have the single (discrete) molecule embedded into the
scaled {\it unit cell}  $\Omega =[-\frac{b}{2},\frac{b}{2}]^3$. 
In the computational domain $\Omega$, we introduce the uniform $n \times n \times n$ rectangular 
Cartesian grid $\Omega_{n}$ with the mesh size $h=b/n$, and define 
the set of tensor-product piecewise constant finite element basis functions $\{ \psi_\textbf{i}\}$,
which are supposed to be separable, i.e.,
%where $\psi_\textbf{i}$ are the tensor-product piecewise polynomials,
%\begin{align} \label{tenfunct}
$  \psi_\textbf{i}(\textbf{x})=\prod_{\ell=1}^d \psi_{i_\ell}^{(\ell)}(x_\ell)$
% with   $x_\ell \in \Omega_\ell$,
%\end{align}
for ${\bf i}=(i_1,i_2,i_3)$, $i_\ell \in I=\{1,...,n\}$.

Following \cite{BeHaKh:08,VKH_solver:13},
the Newton kernel is discretized by the projection/collocation method in the form
of a third order tensor  $\mathbb{R}^{n\times n \times n}$, defined by
\begin{eqnarray}
% {\bf P} := [p_{i_1  i_2  i_3}] \in \mathbb{R}^{n \times n \times n },
% \quad
\mathbf{P}:=[p_{\bf i}] \in \mathbb{R}^{n\times n \times n},  \quad
%\mathbf{P}:=\pc{p_\tb{i}}_{\tb{i} \in \mathcal{I}}\in \mathbb{R}^{n\times n \times n},  \quad
 p_{\bf i} = 
%p_{i_1  i_2  i_3}=
\int_{\mathbb{R}^3} \frac{\psi_{{\bf i}}({x})}{\|{x}\|} \,\, \mathrm{d}{x}.
% \frac{\psi_{i_1 i_2 i_3}(\textbf{x})}{\|\mathbf{x}\|} \,\, \mathrm{d}\tb{x}.
 % \quad \mbox{where}\quad \Omega_\tb{i}=\operatorname{supp}( \psi_\tb{i}).
  \label{galten}
\end{eqnarray}
%see \cite{KhKhFl_Hart:09,BeHaKh:08,VeKh_Diss:10,VKH_solver:13}.
The low-rank canonical decomposition of the $3$rd order tensor $\mathbf{P}$ is based 
on using exponentially convergent 
$\operatorname*{sinc}$-quadratures approximation 
of the Laplace-Gauss transform, \cite{Braess:95,Stenger,GHK:05,HaKhtens:04I}, 
\[
 \frac{1}{z}= \frac{2}{\sqrt{\pi}}\int_{\mathbb{R}_+} e^{- z^2 t^2 } dt,\quad z>0,
\]
which can be adapted to the Newton kernel by substitution $z=\sqrt{x_1^2 + x_2^2  + x_3^2}$.
We denote the resulting $R$-term  canonical representation by
\begin{equation} \label{eqn:sinc_general}
    \mathbf{P} \approx  \mathbf{P}_R 
%\sum_{k=-M}^{M} a_k \bigotimes_{\ell=1}^{3}  {\bf b}^{(\ell)}(t_k)
= \sum\limits_{q=1}^{R} {\bf p}^{(1)}_q \otimes {\bf p}^{(2)}_q \otimes {\bf p}^{(3)}_q
\in \mathbb{R}^{n\times n \times n}.
%\quad a_k,\, t_k \in \mathbb{R},
\end{equation}
We further suppose that all atomic centers are located strictly within subdomain 
$\Omega_0 =[-\frac{b_0}{2},\frac{b_0}{2}]^3\subset \Omega$, $b_0< b$,
called \emph{formation domain}, and
define the auxiliary (bounding) box $\widetilde{\Omega} \supset \Omega$,
associated with the grid parameter $\widetilde{n}=n_0+n$ (say, $\widetilde{n}=2 n$),
see Figure \ref{fig:Unit_cell}.
\begin{figure}[htbp]
\centering
\includegraphics[width=3.0cm]{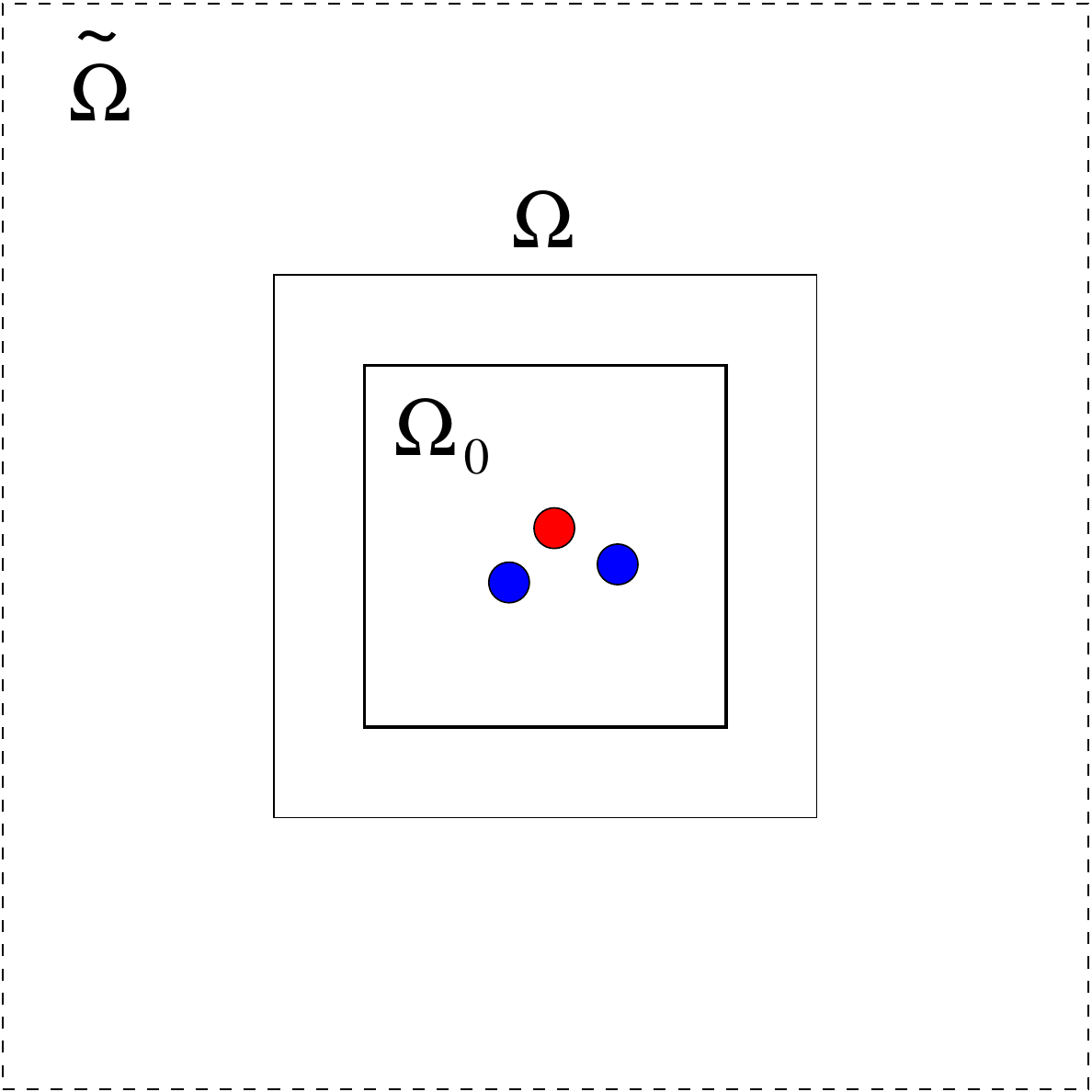}
\caption{\small $2$D unit cell $\Omega$, formation domain $\Omega_0$, and the 
auxiliary bounding box $\widetilde{\Omega}$.}
\label{fig:Unit_cell}  
\end{figure}

Similar to (\ref{eqn:sinc_general}), we introduce the auxiliary ``reference tensor'' 
$\widetilde{\bf P}_R \in \mathbb{R}^{\widetilde{n}\times \widetilde{n} \times \widetilde{n}}$,
living on the grid $\Omega_{\widetilde{n}}$  and approximating the Newton 
kernel in $\widetilde{\Omega}$,
\begin{equation} \label{eqn:master_pot}
\widetilde{\bf P}_R= 
\sum\limits_{q=1}^{R} \widetilde{\bf p}^{(1)}_q \otimes 
\widetilde{\bf p}^{(2)}_q \otimes \widetilde{\bf p}^{(3)}_q
\in \mathbb{R}^{\widetilde{n}\times \widetilde{n} \times \widetilde{n}}.
\end{equation}

The core potential $v_c(x)$ for a single molecule is approximated by a weighted 
sum of canonical tensors
\begin{equation} \label{eqn:core_tensSum}
 {\bf P}_{c} = \sum_{\nu=1}^{M_0} Z_\nu {\bf P}_{{c},\nu}\approx \widehat{\bf P}_{c} 
\in \mathbb{R}^{n\times n \times n},
\end{equation}
where the rank-$R$ tensor ${\bf P}_{{c},\nu}= = {\cal W}_{\nu} \widetilde{\bf P}_R$ 
represents the single reference Coulomb potential in the form (\ref{eqn:master_pot})
shifted and restricted to $\Omega_{n}$ via the windowing operator
${\cal W}_{\nu}={\cal W}_{\nu}^{(1)}\otimes  {\cal W}_{\nu}^{(2)}\otimes {\cal W}_{\nu}^{(3)}$, 
\cite{VeBoKh:Ewald:14},
\begin{equation} \label{eqn:core_tens}
 {\bf P}_{c,\nu} = {\cal W}_{\nu} \widetilde{\bf P}_R =  
\sum\limits_{q=1}^{R} {\cal W}_{\nu}^{(1)} \widetilde{\bf p}^{(1)}_q \otimes 
{\cal W}_{\nu}^{(2)} \widetilde{\bf p}^{(2)}_q 
\otimes {\cal W}_{\nu}^{(3)} \widetilde{\bf p}^{(3)}_q\in \mathbb{R}^{n\times n \times n}.
%\quad rank({\bf P}_{c})\leq M_0 R,
\end{equation}
Here every rank-$R$ canonical tensor 
${\cal W}_{\nu} \widetilde{\bf P}_R \in \mathbb{R}^{n\times n \times n}$, $\nu=1,...,M_0$,
% $$
% {\bf P}_{\nu}= {\cal W}_{\nu} \widetilde{\bf P} \in \mathbb{R}^{n\times n \times n}
% $$ 
is understood as a sub-tensor of the reference tensor 
obtained by a shift and restriction (windowing) of $\widetilde{\bf P}_R$ onto the $n \times n \times n$ 
grid $\Omega_{n}$ in the unit cell $\Omega$, $\Omega_{n} \subset \Omega_{\widetilde{n}}$. 
A shift from the origin is specified according to the coordinates of the corresponding nuclei, $a_\nu$,
counted in the $h$-units.

The initial rank bound $rank({\bf P}_{c})\leq M_0 R$ 
for the direct sum of canonical tensors in (\ref{eqn:core_tensSum}) can be 
improved (see \cite{VeBoKh:Ewald:14}, Remark 2.2).
In the following, we denote by $\widehat{\bf P}_{c}$ the rank-$R_c$ ($R_c\leq M_0 R$) 
canonical tensor obtained from ${\bf P}_{c}$ by the rank optimization 
procedure subject to certain threshold 
(in numerical tests we have $R_c \approx R$).

For the tensor representation of the Newton potentials, ${\bf P}_{{c},\nu}$, we make use 
of the piecewise constant discretization on the equidistant tensor grid, 
%$\omega_{{\bf 3},n}$, (\ref{disc_gauss}), introduced in \S 3, 
where, in general, the univariate grid size $n$ can be noticeably smaller 
than that used for the piecewise linear discretization applied to the Laplace operator.
Indeed, the Galerkin approximation to the eigenvalue problem is constructed
by using the global basis functions (reduced basis set $\{{g}_k\}$, $k=1,...,m_0$), hence
the grid-based representation of these basis functions can be different 
in the calculation of the kinetic and potential parts in the Fock operator.
The grid size $n$ is the only controlled by the 
approximation error for the integrals in (\ref{eqn:Core_Ham}) and by the numerical efficiency 
depending on the separation rank parameters.

Given tensor ${\bf P}_{c}$, the entries in the stiffness matrix $V_c$ in (\ref{eqn:Core_Ham_V}) 
can be evaluated by simple tensor operations.
Fixed the GTO-type basis set $\{{g}_k\}$, $k=1,...,m_0$, i.e. $N_b=m_0$, defined in the 
scaled unit cell ${\Omega}$, where for ease of presentation functions ${g}_k$ are supposed to be separable.
Introduce the corresponding rank-$1$ coefficients tensors 
${\bf G}_k={\bf g}_k^{(1)}\otimes{\bf g}_k^{(2)} \otimes{\bf g}_k^{(3)}$ 
representing their piecewise constant approximations $\{\widehat{g}_k\}$ 
on the fine ${n}\times {n}\times {n}$ grid. 
Then the entries of 
the respective Galerkin matrix ${V}_c=[{v}_{km}]$ in (\ref{eqn:Core_Ham_V}) 
approximating the core potential operator $v_c$ in (\ref{eqn:ElectrostPot})
 are represented (approximately) by the following tensor operations,
\begin{equation}  \label{eqn:nuc_pot}
 {v}_{km} \approx \int_{{\Omega}} V_c(x) \overline{g}_k(x) \overline{g}_m(x) dx 
 \approx  \langle {\bf G}_k \odot {\bf G}_m ,  \widehat{\bf P}_{c}\rangle =: {V}_{km} , 
\quad 1\leq k, m \leq m_0.
\end{equation}
% where $\{\overline{g}_k\}$ denotes the piecewise constant representations to 
% the respective Galerkin basis functions.

The error arising due to the separable $\varepsilon$-approximation 
of the discretized nuclear potential 
is controlled by the rank parameter $R_{c}= rank(\widehat{\bf P}_{c})$. Now
letting $rank({\bf G}_m) = 1$ implies that each matrix element is to be computed with 
linear complexity in the univariate grid-size $n$, $O( R_{c} \, n)$. 
The almost exponential convergence of the tensor approximation 
in the separation rank $R_{c}$ leads to the asymptotic behavior of the
$\varepsilon$-rank, $R_{c}=O(|\log \varepsilon |)$.

\subsection{Nuclear potential operator for a lattice system in a box}
\label{ssec:Core_Ham_gener}

Here we apply the previous constructions to the lattice structured location of nuclei.
% Low-rank tensor decomposition of the Coulomb interaction defined by the large lattice sum is 
% proposed in \cite{VeBoKh:Ewald:14}.
Given the potential sum $v_c(x)$ defined by (\ref{eqn:ElectrostPot})
in the scaled unit cell $\Omega = [-\frac{b}{2},\frac{b}{2}]^3$ of size $b\times b \times b$,
see Figure \ref{fig:Unit_cell}, 
we specify the smaller subdomain  $\Omega_0= [-\frac{b_0}{2},\frac{b_0}{2}]^3 \subset \Omega$ 
(called the formation cell) whose interior 
contains all atomic centers in the unit cell included into the summation 
in (\ref{eqn:ElectrostPot}).

Let us consider an interaction potential in a symmetric computational box (supercell)  
$$
\Omega_L =B_1\times B_2 \times B_3, \quad \mbox{with} 
\quad B_\ell = \frac{1}{2}[- b_0 L_\ell -b  ,b_0L_\ell + b ]
%:= [-b/2+(-L_1/2 -1)\delta] \times[-b/2+(L_2-1)\delta] \times[-b/2+(L_3-1)\delta],
$$ 
consisting of a union of $L_1 \times L_2 \times L_3$ unit cells $\Omega_{\bf k}$,
obtained  by a shift of the reference domain $\Omega$ along the lattice vector 
${\bf b_0}/2 + b_0 {\bf k}$, where
${\bf k}=(k_1,k_2,k_3)\in \mathbb{Z}^3$, such that for $\ell=1,2,3$,
\[
 k_\ell \in {\cal K}_\ell:=\{0,1,...,L_\ell-1\}.
\]
% $$
% k_\ell \in {\cal K}_\ell:={\cal K}_{-}\cup {\cal K}_{+}, \quad 
% \mbox{with}\quad  {\cal K}_{-}:=\{-\lceil\frac{L_\ell}{2}\rceil,...,-1\}\quad  \mbox{and} \quad
% {\cal K}_{+}:=\{0,1,...,\lfloor\frac{L_\ell}{2}\rfloor-1\}.
% $$
In this notation the choice $L_\ell=1$ corresponds to the 3D one-layer system 
in the respective variable as illustrated in Figure \ref{fig:Hydro_chain115_3D}.
Figure \ref{fig:Lattice_5_2D} represents the $2$D projection of the 3D computational domain
$\Omega_L$ for the $L_1 \times 1 \times 1  $ molecular chain with $L_1=5$. Dashed regions
correspond to the overlapping parts between shifted unit cells.

\begin{figure}[htbp]
\centering
\includegraphics[width=7.0cm]{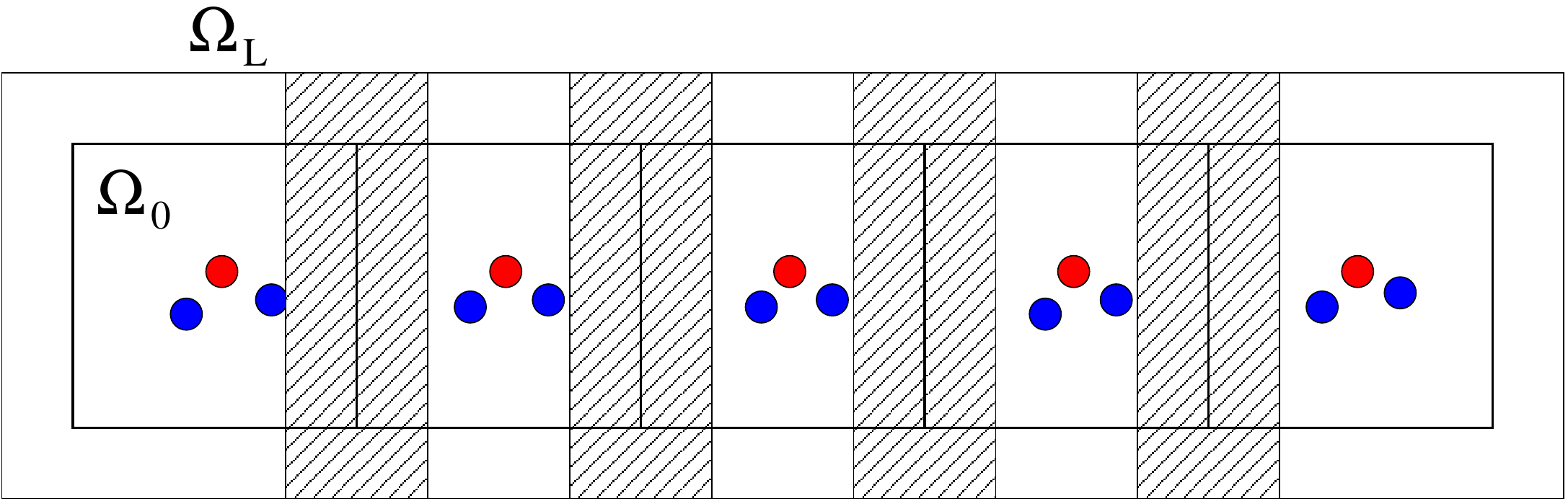}
\caption{\small  $2$D projection of the supercell for the $5 \times 1 \times 1$ chain in $3$D.}
\label{fig:Lattice_5_2D}  
\end{figure}
Figure \ref{fig:Hydro_chain115_3D} represents the geometry of the $3$D chain-type 
computational "tube" $\Omega_L$.

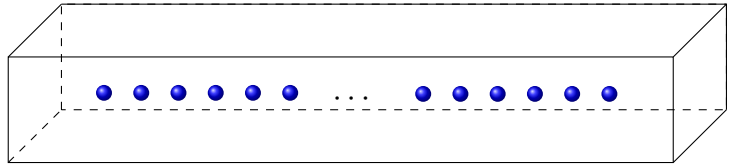
\begin{figure}[htbp]
\centering
\label{fig:Hydro_chain115_3D}
\begin{tikzpicture}[scale=0.7]

\path[draw] (1.5,4) -- (14,4);  
\path[draw] (14,2) -- (14,4); 
 
\path[draw] (0.5,3) -- (1.5,4);  
\path[draw] (13,3) -- (14,4);  

\path[draw] (13,1) -- (14,2);  
\path[draw][style=dashed] (0.5,1) -- (1.5,2);  

\draw (0.5,1) rectangle (13,3);
\draw[style=dashed] (1.5,2) rectangle (14,4);
 
\foreach \x in {2,2.7,3.4,4.1,4.8,5.5} {
\shade[shading = ball, ball color = blue] (\x+0.3,2.32) circle (.15);
}
 
\node at (7,2.2) {$\cdots$};

 \node at (0,0) { };
 \node at (14,5) { };
\foreach \x in {8,8.7,9.4,10.1,10.8,11.5} {
\shade[shading = ball, ball color = blue] (\x+0.3,2.3) circle (.15);
}
\end{tikzpicture}
\caption{Example of the $L \times 1 \times 1$ chain in $3$D.}
\end{figure}

For the discussion of complexity issues, we  often consider a cubic lattice of equal 
sizes $ L_1= L_2=L_3=L$.
By the construction, we set $b=n h$ and $b_0=n_0 h$, where the mesh-size $h >0$ is 
chosen the same for all spacial variables. 

In the most interesting case of extended system in a box, further called case (B),
the potential $v_{c_L}(x)$, for $x\in \Omega_{L}$, 
is obtained by summation over all unit cells $\Omega_{\bf k}$ in $\Omega_L$,
\begin{equation}\label{eqn:EwaldSumE}
v_{c_L}(x)=  \sum_{\nu=1}^{M_0} \sum\limits_{{\bf k}\in {\cal K}^3} 
\frac{Z_\nu}{\|{x} -a_\nu - b {\bf k} \|}, \quad x\in \Omega_{L}. 
%\quad \forall \Omega_{\bf k}\subset \Omega_L,
\end{equation}
%\cred 
Note that the direct calculation by (\ref{eqn:EwaldSumE}) is performed at each of $L^3$ unit cells 
$\Omega_{\bf k}\subset \Omega_L$, ${\bf k}\in {\cal K}^3$, on a 3D lattice, 
which presupposes substantial numerical costs at least of the order of $O(L^3)$ per unit cell.

The fast calculation of (\ref{eqn:EwaldSumE}) is implemented by using the tensor summation method
introduced in \cite{VeBoKh:Ewald:14,VeKhorEwTuck_NLLA:15} which can be  described as follows.
Let $\Omega_{N_L}$ be the $N_L\times N_L\times N_L$ uniform grid on $\Omega_L$ with the 
same mesh-size $h$
as above, and introduce the corresponding space of piecewise constant basis functions 
of the dimension $N_L^3$, where we have $N_L = n_0 L + n-n_0$. Given the reference tensor
in (\ref{eqn:master_pot}), the resultant lattice sum is presented by the canonical tensor 
${\bf P}_{c_L}$ 
\begin{equation}\label{eqn:EwaldTensorGl}
{\bf P}_{c_L}= %\sum\limits_{\nu=1}^{M_0} Z_\nu 
\sum\limits_{\nu=1}^{M_0} Z_\nu   \sum\limits_{q=1}^{R}
(\sum\limits_{k_1\in {\cal K}_1} {\cal W}_{\nu({k_1})} \widetilde{\bf p}^{(1)}_{q}) \otimes 
(\sum\limits_{k_2\in {\cal K}_2} {\cal W}_{\nu({k_2})} \widetilde{\bf p}^{(2)}_{q}) \otimes 
(\sum\limits_{k_3\in {\cal K}_3} {\cal W}_{\nu({k_3})} \widetilde{\bf p}^{(3)}_{q}),
\end{equation}
whose  rank is uniformly bounded $R_c \leq M_0 R$.
The numerical cost and storage size are bounded by $O(M_0 R L N_L )$, 
and $O(M_0 R N_L)$, respectively (see \cite{VeBoKh:Ewald:14}, Theorem 3.1),
where  $N_L= O(n_0 L)$. The lattice sum in (\ref{eqn:EwaldTensorGl}) converges 
only conditionally as $L\to \infty$.
This aspect will be addressed in Section \ref{ssec:Complexity_EigPr} following the approach 
discussed in \cite{VeBoKh:Ewald:14,VeKhorEwTuck_NLLA:15}.

In the case of lattice system in a box, we define the basis set on a supercell $\Omega_{L}$ 
(and on $\widetilde{\Omega}_L$) 
by translation of the generating basis, defined in $\Omega_0$ for the single molecule, 
by the lattice vector $b {\bf k}$, i.e., 
$\{g_{\mu}({x})\} \mapsto \{g_{\mu}({x+ b {\bf k} })\}$, $\mu=1,...,m_0$,
where ${\bf k}=(k_1,k_2,k_3)\in {\cal K}^3$, 
%${\bf k}\in {\cal K}^3$ $0 \leq k_\ell\leq L_\ell -1$, ($\ell=1,2,3$), 
assuming zero extension 
of $\{g_{\mu}({x+ b {\bf k} })\}$ beyond each local bounding box $\widetilde{\Omega}_{\bf k}$.
The corresponding tensor representation of such functions is denoted by ${\bf G}_{{\bf k},\mu}$. 
The total number of basis functions for the lattice system is equal to $N_b=m_0 L^3$.

In what follows, the matrix block entries of the $N_b\times N_b$ stiffness matrix $V_{c_L}$, 
corresponding to large basis set on a supercell $\Omega_{L}$,
will be numbered by a pair of multi-indices, 
$V_{c_L}=[V_{{\bf k}{\bf m}}]$, where each $m_0\times m_0$ 
matrix block $V_{{\bf k}{\bf m}}$ is defined by
\begin{equation} \label{eqn:nuc_MatrSparsP}
%  \overline{v}_{km}=  \int_{\mathbb{R}^3} v_c(x) \overline{g}_k(x) \overline{g}_m(x) dx 
% \approx  
V_{{\bf k}{\bf m}}(\mu,\nu) = \langle {\bf G}_{{\bf k},\mu} \odot {\bf G}_{{\bf m},\nu},{\bf P}_{c_L}\rangle, 
\quad {\bf k}, {\bf m}\in {\cal K}^3.
% - L/2 \leq k_\ell, m_\ell \leq L/2,\quad \ell=1,2,3,
\end{equation}
%where the canonical tensors ${\bf G}_{\bf k}$ inherit the same block indexing.
This definition introduces the three-level block structure in the matrix $V_{c_L}$, which
will be discussed in what follows. 

In the practically interesting case of localized atomic orbitals (AO) basis, 
the matrix  $V_{c_L}$ exhibits the banded block 
sparsity pattern since the effective support of localized AO 
associated with every unit cell $\Omega_{\bf k} \subset \widetilde{\Omega}_{\bf k}$ 
overlaps only fixed (small) number of neighboring cells.
We call the number of overlapping neighboring cells by 
the {\it overlap constant}, $L_0$. 
The constant $L_0$ measures the essential overlap between basis functions in each spacial direction. 
For example, Figure \ref{fig:Lattice_5_2D} corresponds to the choice $L_0=2$.

\begin{lemma}\label{lem:SparseCaseE}
Assume that  the overlap constant does not exceed $L_0$, then: 

(a) The number of non-zero blocks in each block row (column) of the symmetric 
Galerkin matrix $V_{c_L}$ does not exceed $(2 L_0 + 1)^3$.
 
(b) The storage size is bounded by $m_0^2 [(L_0 + 1)L]^3$.

(c) The cost for evaluation of each $m_0\times m_0$ matrix block is bounded by $O(m_0^2 M_0 R N_L)$.
\end{lemma}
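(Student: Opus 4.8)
The plan is to prove the three claims by a direct
counting argument based on the locality of the translated GTO basis functions.
First I would make precise the notion of overlap: two basis functions
$g_{{\bf k},\mu}$ and $g_{{\bf m},\nu}$ (or rather their tensor representations
${\bf G}_{{\bf k},\mu}$, ${\bf G}_{{\bf m},\nu}$) have intersecting effective
supports only if the cell indices ${\bf k}$ and ${\bf m}$ differ by at most $L_0$
in each of the three coordinate directions, i.e.\ $\|{\bf k}-{\bf m}\|_\infty \le L_0$.
This is exactly the content of the hypothesis that the overlap constant does not
exceed $L_0$; by the definition of the entries $V_{{\bf k}{\bf m}}(\mu,\nu)$ in
(\ref{eqn:nuc_MatrSparsP}) as an inner product weighted by
${\bf G}_{{\bf k},\mu}\odot {\bf G}_{{\bf m},\nu}$, the block $V_{{\bf k}{\bf m}}$
vanishes identically whenever $\|{\bf k}-{\bf m}\|_\infty > L_0$, since then the
Hadamard product ${\bf G}_{{\bf k},\mu}\odot {\bf G}_{{\bf m},\nu}$ is the zero tensor.

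For part (a), I would fix a block row index ${\bf k}\in {\cal K}^3$ and count the
number of ${\bf m}\in {\cal K}^3$ for which $\|{\bf k}-{\bf m}\|_\infty \le L_0$.
In each coordinate direction $\ell$ the admissible values $m_\ell$ lie in the
interval $[k_\ell - L_0, k_\ell + L_0]\cap {\cal K}_\ell$, which contains at most
$2L_0+1$ integers; taking the product over $\ell=1,2,3$ gives the bound
$(2L_0+1)^3$ on the number of non-zero blocks in the block row indexed by ${\bf k}$.
Symmetry of $V_{c_L}$ (inherited from the symmetry of the inner product in
(\ref{eqn:nuc_MatrSparsP}) under swapping $({\bf k},\mu)\leftrightarrow({\bf m},\nu)$)
gives the same bound for block columns.

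For part (b), I would multiply the per-row block count by the number of block rows
and by the size of one block: there are $L^3$ block rows (for the cubic lattice
$L_1=L_2=L_3=L$), at most $(2L_0+1)^3$ non-zero $m_0\times m_0$ blocks per row, and
each such block requires $m_0^2$ reals, so the total storage is bounded by
$m_0^2 (2L_0+1)^3 L^3 = m_0^2 [(2L_0+1)L]^3$; since for the stated estimate it
suffices to absorb constants, this is of order $m_0^2[(L_0+1)L]^3$ up to a
fixed multiplicative constant, yielding the claimed bound (the two expressions
differ only by a bounded factor independent of $L$ and $m_0$, and a slightly
sharper accounting using the halved count $(L_0+1)$ per direction in the
triangular-storage sense matches the stated form). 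For part (c), each entry
$V_{{\bf k}{\bf m}}(\mu,\nu)$ is an inner product $\langle {\bf G}_{{\bf k},\mu}
\odot {\bf G}_{{\bf m},\nu}, {\bf P}_{c_L}\rangle$ of a rank-$1$ tensor against a
canonical tensor of rank $R_c \le M_0 R$ on the $N_L\times N_L\times N_L$ grid;
by the standard complexity of contracting a rank-$1$ tensor with a rank-$R_c$
canonical tensor (reduction to $R_c$ univariate scalar products of length $N_L$),
this costs $O(R_c N_L) = O(M_0 R N_L)$ per entry, hence $O(m_0^2 M_0 R N_L)$ per
$m_0\times m_0$ block. The main obstacle, such as it is, is not in the counting
but in pinning down the geometric claim that the overlap constant controls the
$\ell_\infty$-distance between interacting cell indices uniformly; once that
locality statement is granted from the construction of the translated basis
(zero extension beyond each local bounding box $\widetilde{\Omega}_{\bf k}$ plus
the assumed effective-support size), the rest is elementary bookkeeping.
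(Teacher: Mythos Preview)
Your proposal is correct and follows essentially the same route as the paper's own proof: the paper also invokes the vanishing Hadamard product condition ${\bf G}_{\bf k}\odot{\bf G}_{\bf m}=0$ when $|k_\ell-m_\ell|\ge L_0$, counts at most $(2L_0+1)^3$ overlapping cells per fixed row index, appeals to symmetry of $V_{c_L}$ for (b), and obtains the $O(M_0 R N_L)$ per-entry cost by reducing the block entry to a sum of products of univariate inner products of length $N_L$. The only cosmetic difference is that the paper spells out this last reduction explicitly by expanding $\langle {\bf G}_{\bf k}\odot{\bf G}_{\bf m},{\bf P}_{c_L}\rangle$ into the triple product $\prod_{\ell=1}^3\langle {\bf g}_{\bf k}^{(\ell)}\odot{\bf g}_{\bf m}^{(\ell)},\sum_{k_\ell}{\cal W}_{\nu(k_\ell)}\widetilde{\bf p}^{(\ell)}_q\rangle$, whereas you invoke the standard rank-$1$ vs.\ rank-$R_c$ contraction cost directly; the content is identical.
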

\begin{proof}
In case (B), the matrix elements of $V_{c_L}=[v_{km}]\in \mathbb{R}^{N_b\times N_b}$ 
represented in (\ref{eqn:nuc_pot}), or in the block form in (\ref{eqn:nuc_MatrSparsP}),
can be expressed  by the following tensor operations
 \begin{equation} \label{nuc_potMatrTot}
 v_{{k}{ m}} =  \int_{\mathbb{R}^3} v_c(x) \overline{g}_k(x) \overline{g}_m(x) dx 
\approx  \langle {\bf G}_k \odot {\bf G}_m ,   {\bf P}_{c_L}\rangle =: v_{km}, 
\quad 1\leq k, m \leq N_b,
\end{equation}
where again $\{\overline{g}_k\}$  denotes the piecewise constant representations to 
the respective Galerkin basis functions.
This leads to the block representation (\ref{eqn:nuc_MatrSparsP}) 
in terms of univariate vector operations
\[
\begin{split}
 V_{{\bf k}{\bf m}} & = \sum\limits_{\nu=1}^{M_0} Z_\nu \sum\limits_{q=1}^{R}
\langle {\bf G}_{\bf k} \odot {\bf G}_{\bf m} , 
(\sum\limits_{k_1 \in {\cal K}} {\cal W}_{\nu({k_1})} \widetilde{\bf p}^{(1)}_{q}) \otimes 
(\sum\limits_{k_2\in {\cal K}} {\cal W}_{\nu({k_2})} \widetilde{\bf p}^{(2)}_{q}) \otimes 
(\sum\limits_{k_3\in {\cal K}} {\cal W}_{\nu({k_3})} \widetilde{\bf p}^{(3)}_{q})  \rangle \\
 &=
\sum\limits_{\nu=1}^{M_0} Z_\nu \sum\limits_{q=1}^{R}
\prod\limits_{\ell=1}^3
\langle {\bf g}_{\bf k}^{(\ell)} \odot { \bf g}_{\bf m}^{(\ell)},
\sum\limits_{k_\ell\in {\cal K}} {\cal W}_{\nu({k_\ell})} \widetilde{\bf p}^{(\ell)}_{q} \rangle.  
% \langle {G}_k^{(2)} \odot { G}_m^{(2)},(\sum\limits_{k_2=1}^N {\cal W}_{\nu({k_2})} N^{(2)}_{q}) \rangle 
% \langle {G}_k^{(3)} \odot { G}_m^{(3)},(\sum\limits_{k_3=1}^N{\cal W}_{\nu({k_3})} N^{(3)}_{q})  \rangle
\end{split}
\]
Combining  the block representation (\ref{eqn:nuc_MatrSparsP}) and taking into account 
the overlapping property
\begin{equation} \label{eqn:Overlap_Basis}
 {\bf G}_{\bf k} \odot {\bf G}_{\bf m}=0 \quad \mbox{if} \quad | k_\ell - m_\ell| \geq L_0, 
\end{equation}
we are able to analyze the block sparsity pattern in the Galerkin matrix $V_{c_L}$.
Given $3 M_0 R $ canonical vectors 
$\sum\limits_{k_\ell \in {\cal K}} {\cal W}_{\nu({k_\ell})} 
\widetilde{\bf p}^{(\ell)}_{q}\in \mathbb{R}^{N_L}$, 
where $N_L$ denotes the total number of grid points in $\Omega_L$ in each space variable.
Now the numerical cost to compute ${v}_{km}$ for every fixed index $(k,m)$ is 
estimated by $O(M_0 R  N_L)$ 
indicating linear scaling in the large grid parameter $N_L$ (but not cubic). 

Fixed the row index in $(k,m_\ast)$, then item (b) follows from the bound on the total number of 
overlapping cells $\Omega_{\bf k}$ in the effective integration 
domain in (\ref{nuc_potMatrTot}), that is $(2 L_0 + 1)^3$,  
and from the symmetry of $V_{c_L}$.
\end{proof}

Figure \ref{fig:3DCoreHamPer1} illustrates the sparsity pattern of the 
nuclear potential contribution $V_{c_L}$ in the matrix $H$, computed 
for $L\times  1\times 1$ lattice 
in a $3$D supercell with $L=48$ and $m_0=4$, and
the overlapping parameter $L_0=3$. In Figure (\ref{fig:3DCoreHamPer1}), right one 
can observe the nearly-boundary effects due to 
the non-equalized  contributions from the left and from the right (supercell in a box). 

\begin{figure}[htbp]
\centering
\includegraphics[width=6.0cm]{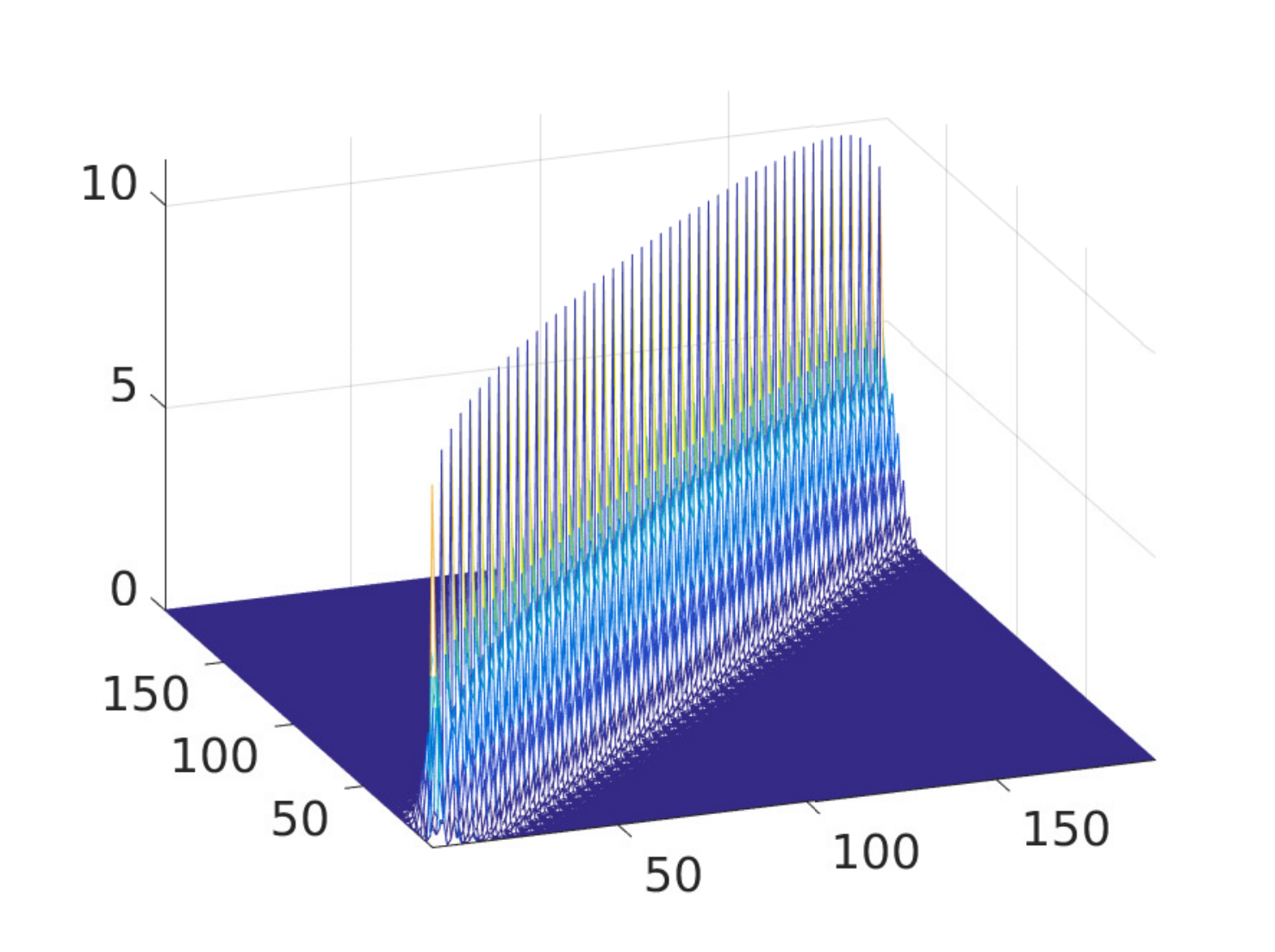}\quad  
\includegraphics[width=6.0cm]{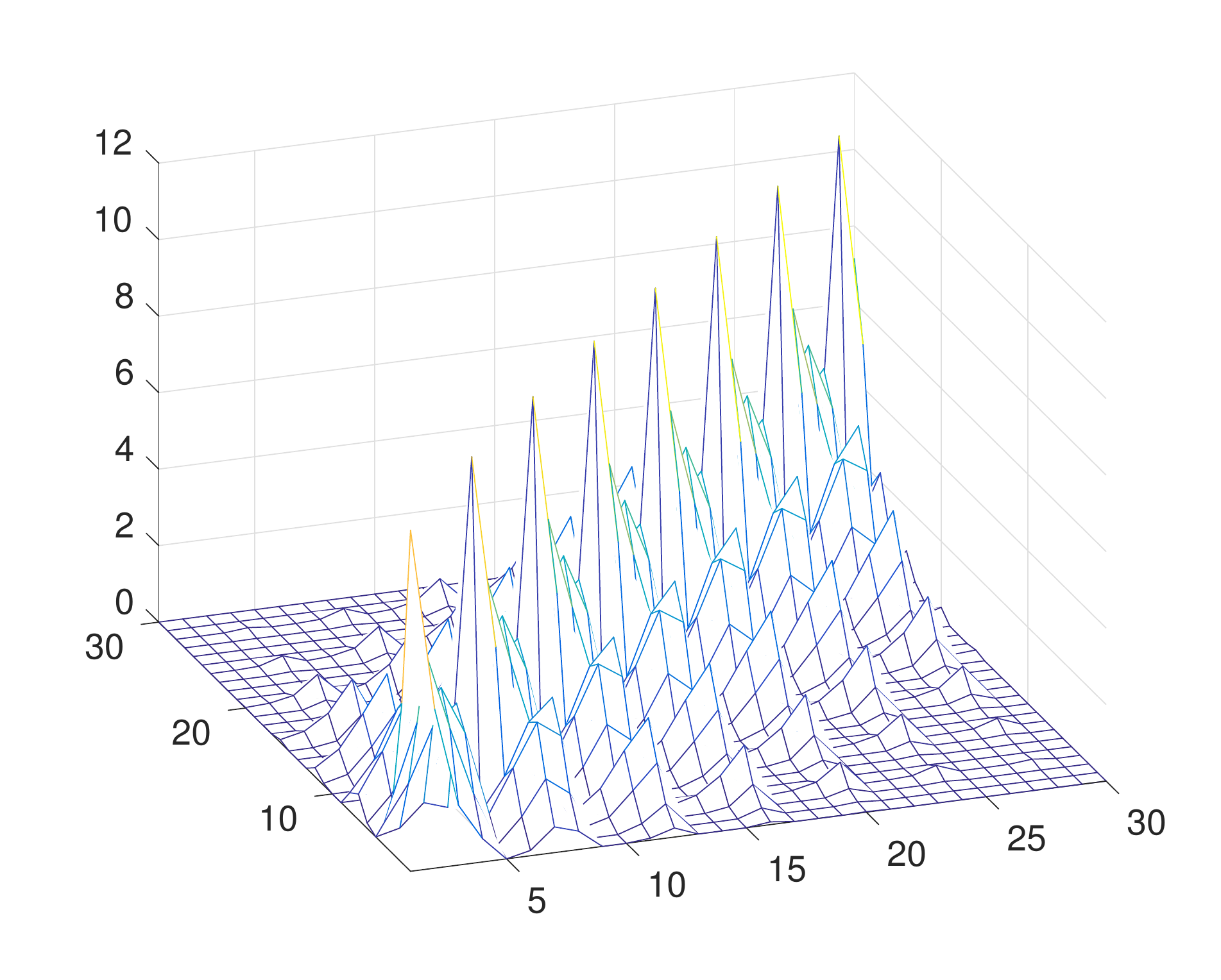}
\caption{\small  Block-sparsity in the matrix $V_{c_L}$, for a finite lattice 
$L\times 1 \times 1 $ with $L=48$ (left); 
zoom of the first $30\times 30 $ entries of the matrix (right).}
\label{fig:3DCoreHamPer1}  
\end{figure}

Notice that the quantized tensor approximation (QTT) of canonical vectors involved in ${\bf G}_k$ 
and ${\bf P}_{c_L}$ reduces this cost to the logarithmic scale, $O(M_0 R \log N_L)$, 
that is important in the case of large $L$ in view of $N_L=O(L)$, see 
the discussion in \cite{VeBoKh:Ewald:14}.

The block $L_0$-diagonal structure of the matrix $V_{c_L}=[V_{{\bf k}{\bf m}}]$,
${\bf k},{\bf m}\in {\cal K}^3$ described by Lemma \ref{lem:SparseCaseE} allows 
the essential saving in the storage costs. 

However, the polynomial complexity scaling in $L$ leads to severe limitations on the number of
unit cells. These limitations can be relax if we look more precisely on the
defect between matrix ${V}_{c_L}$ and its block-circulant version corresponding to
the periodic boundary conditions (see \S\ref{ssec:Core_Ham_period}). 
This defect can be split into two components corresponding to their local and non-local features:
\begin{enumerate}
 \item [(A)] The non-local effect indicates the asymmetry in the interaction potential 
 sum on the lattice in a box.
\item [(B)] The near boundary (local) defect  effects only those blocks in 
 $V_{c_L}=\{V_{{\bf k}{\bf m}}\}$ lying in the $L_0$-width of $\partial\Omega_L$.
% \[
%  L_0+1 -(L-1)/2\leq k_\ell, m_\ell \leq (L-1)/2-1 -L_0.
% \] 
\end{enumerate}

The defect in item (A) can be diminished by a slight  modification of the core potential 
to the shift invariant Toeplitz-type form $V_{{\bf k}{\bf m}} = V_{|{\bf k}-{\bf m}|}$
by replication of the central unit cell to the whole lattice,
as considered in Section 3. In this way the overlap condition 
(\ref{eqn:Overlap_Basis}) for the tensor ${\bf G}_{\bf k}$ 
will impose the $(2L_0 +1)$ block diagonal sparsity in the block-Toeplitz matrix.

The boundary effect in item (B) becomes relatively small for large number of unit cells
so that the block-circulant part of the matrix $V_{c_L}$ is getting dominating
as $L\to \infty$. 

The full diagonalization for the above mentioned matrix $V_{c_L}$ can be prohibitively expensive.
However, the efficient storage and fast matrix-vector multiplication algorithms 
can be applied in the framework of structured iteration on subspace for calculation of a small
subset of eigenvalues, see \cite{BeDoKh2_BSE2:16}.

\subsection{Discrete Laplacian  and the mass matrix}\label{ssec:Lap_Op}

In the case of a single molecule, the Laplace operator in (\ref{eqn:EIGHFcore}), (\ref{eqn:HFcore}) 
is posed in the unit cell $ \Omega=[-b/2,b/2]^3 \in \mathbb{R}^3 $,
subject to the homogeneous Dirichlet boundary conditions on $\partial \Omega$.
Periodic case corresponds to periodic boundary conditions.
Given discretization parameter $\widehat{n} \in \mathbb{N}$,  we use
the equidistant $\widehat{n}\times \widehat{n} \times \widehat{n}$ tensor 
grid $\Omega_{\widehat{n}}=\{x_{\bf i}\} $, 
${\bf i} \in {\cal I} :=\{1,...,\widehat{n}\}^3 $,
defined by the mesh-size $h=b/(\widehat{n} + 1)$.
This grid might be different from $\Omega_{n} $ introduced in \S2.2
for representation of the interaction potential in the set of piecewise constant basis functions 
(usually, $n\leq \widehat{n}$).

Define a linear tensor-product interpolation operator ${\bf I}$ via the set of product hat functions,
 $\{\xi_{\bf i} :=  \xi_{i_1} (x_1) \xi_{i_2} (x_2)\xi_{i_3} (x_3)$, 
${\bf i} \in {\cal I}\}$,
associated with the respective grid-cells in $\Omega_{\widehat{n}}$.
Here the linear interpolant ${\bf I}= {I}_1\times {I}_1 \times {I}_1$ is 
a product of 1D interpolation operators, % $\widehat{g}_k^{(\ell)}= {I}_1 {g}_k^{(\ell)}$,
where ${I}_1:C^0([-b,b])\to W_h:=span\{\xi_i\}_{i=1}^{\widehat{n}}$ 
is defined over the set of piecewise linear basis functions by 
$$
({I}_1 \, w)(x_\ell):=\sum_{{i}=1}^{\widehat{n}} w(x_{i_\ell})\xi_{i}(x_\ell), 
\quad  x_{\bf i} \in \Omega_{\widehat{n}}, \quad \ell=1,2,3.
$$

Define the 1D FEM Galerkin stiffness (for Laplacian) and mass matrices 
$A^{(\ell)}, S^{(\ell)} \in \mathbb{R}^{\widehat{n}\times \widehat{n}}$, respectively, by
\[
 A^{(\ell)} := \{ \langle \frac{d}{d x_\ell} \xi_i(x_\ell) , \frac{d}{d x_\ell} \xi_j(x_\ell) 
\rangle \}^{\widehat{n}}_{i,j=1} = \frac{1}{h} \mbox{tridiag} \{-1,2,-1\},
\]
\[ 
 S^{(\ell)}=\{ \langle \xi_i ,\xi_j\rangle \}^{\widehat{n}}_{i,j=1} = \frac{h}{6}\;
 \mbox{tridiag} \{1,4,1\},\quad \ell=1,\,2,\,3.
\]
For fixed dimension $d$ and $k\leq d$, introduce the mixed Kronecker product of matrices
$S^{(\ell)}$ and $A^{(\ell)}$
$$
\otimes_{(d \curlyvee k)}(S^{(\ell)},A^{(k)})= 
S^{(1)} \otimes ... \otimes S^{(k-1)} \otimes A^{(k)}\otimes S^{(k+1)}\otimes ...\otimes S^{(d)}.
$$
In the following, we apply the similar notations with respect to 
the Hadamard product of matrices $\odot$ and the usual multiplication operation, $\prod$.

Following \cite{KhorVBAndrae:12}, the rank-$3$ Kronecker tensor representation 
of the standard FEM Galerkin 
stiffness matrix for the Laplacian, $A_3\in 
\mathbb{R}^{\widehat{n}^{ 3}\times \widehat{n}^{3}}$, in the separable basis 
$\{\xi_i(x_1) \xi_j (x_2)\xi_k (x_3) \} $, $i,j,k = 1,\ldots \widehat{n}$,
reads as
$$
 A_3 := A^{(1)} \otimes S^{(2)} \otimes S^{(3)} + S^{(1)} \otimes A^{(2)} \otimes S^{(3)}
+ S^{(1)} \otimes S^{(2)} \otimes A^{(3)}\equiv 
\sum_{k=1}^d \otimes_{(d \curlyvee k)}(S^{(\ell)},A^{(k)}). 
$$ 
In turn, the mass matrix takes the separable Kronecker product form
\[
 S_3= S^{(1)} \otimes S^{(2)} \otimes S^{(3)}\in 
\mathbb{R}^{\widehat{n}^{3}\times \widehat{n}^{ 3}}.
\]

% and $\nabla_{(1)}=\frac{d}{d x_\ell}$.
% Since $\{ \xi_i \}^{\tilde{n}}_{i=1}$ can be chosen  the same for all modes $\ell=1,\, 2,\, 3$, 
% we simplify notations as $A^{(\ell)}=A_1$, and $S^{(\ell)}=S_1$.

For given GTO-type basis set $\{g_k(x)=g_{k}^{(1)}(x_{1})g_{k}^{(2)}(x_{2})g_{k}^{(3)}(x_{3})\}$ 
define a set of piecewise linear basis functions 
$\widehat{g}_k^{(\ell)} := {I}_1 g_k^{(\ell)} $, $k=1,...,m_0$, and introduce
the separable grid-based approximation of the initial basis functions $g_k(x)$,
\begin{equation*}\label{eq. Gaus pwl}
g_k (x) \approx \widehat{g}_k (x) := \prod^3_{\ell=1} 
\widehat{g}_k^{(\ell)} (x_{\ell})=\prod^3_{\ell=1} 
\sum\limits^{\widehat{n}}_{i=1} g_{k}^{(\ell)}(x_{i_\ell}) \xi_i (x_{\ell}).
\end{equation*}
Here the rank-$1$ coefficients tensor 
${\bf G}_k={\bf g}_k^{(1)} \otimes {\bf g}_k^{(2)} \otimes {\bf g}_k^{(3)}
\in \mathbb{R}^{{\widehat{n}}^{\otimes 3}}$ given by the canonical vectors 
${\bf g}_k^{(\ell)}=\{g_{k}^{(\ell)}(x_{i_\ell})\}$, ($k=1,...,m_0$) 
is associated with the Kronecker product of vectors, 
${\bf g}_k=vec({\bf G}_k)\in \mathbb{R}^{\widehat{n}^{3}} $.
Let us agglomerate vectors ${\bf g}_k$
in a Kronecker product matrix $G={G}^{(1)} \otimes {G}^{(2)} \otimes {G}^{(3)}
\in  \mathbb{R}^{{\widehat{n}}^{3}\times m_0}$, where 
${G}^{(\ell)}=[{\bf g}_1^{(\ell)},...,{\bf g}_{m_0}^{(\ell)}]\in  
\mathbb{R}^{{\widehat{n}}\times m_0}$, ($\ell=1,2,3$), 
is constructed by concatenation of vectors ${\bf g}_k^{(\ell)}$.
Then
the Galerkin stiffness matrix for the Laplacian and the mass matrix in 
the GTO basis set $\{{\bf G}_k \}$ can be written as  
\begin{equation}\label{eqn:Lapl_Mass_Gal}
 A_G= G^T A_3 G\in  \mathbb{R}^{m_0\times m_0}, \quad 
 S_G=G^T S_3 G  \in  \mathbb{R}^{m_0 \times m_0},
\end{equation}
corresponding to the standard matrix-matrix transform under the change of basis.

Applying the above representations to the $L\times L \times L$ lattice systems 
as described in \S2.3
leads to the symmetric and sparce block-Toeplitz structure of the $N_b \times N_b$ Galerkin 
matrices with the block size $m_0\times m_0$ and with $N_b=m_0 L^3$.

\begin{proposition}\label{prop:matr_GTO_Laplace}
Assume that  the overlap constant does not exceed $L_0$, then: 

(A) The cost for evaluation of each $m_0\times m_0$ matrix block is bounded by
$O(m_0^2 \widehat{n})$.

(B) The number of non-zero blocks in each block row (column) of the symmetric 
Galerkin matrices $A_G$ and $S_G$ does not exceed $(2 L_0 + 1)^3$.
 
(C) Both $A_G$ and $S_G$ are symmetric $3$-level block-Toeplitz matrices.
The storage size is bounded by $m_0^2 (L_0 + 1)^3 L^3$.
\end{proposition}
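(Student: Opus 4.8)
The plan is to treat the three assertions (A), (B), (C) separately, reducing each to the structure already established for the single-molecule case in \S\ref{ssec:Lap_Op} together with the lattice translation ansatz of \S\ref{ssec:Core_Ham_gener}. For assertion (A), I would recall that the lattice-translated GTO basis functions $g_{{\bf k},\mu}$ have separable (rank-$1$) coefficient tensors ${\bf G}_{{\bf k},\mu}={\bf g}_{{\bf k},\mu}^{(1)}\otimes{\bf g}_{{\bf k},\mu}^{(2)}\otimes{\bf g}_{{\bf k},\mu}^{(3)}$ on the supercell grid $\Omega_{\widehat{n}_L}$. Using the Kronecker-product representations $A_3=\sum_{k=1}^{d}\otimes_{(d\curlyvee k)}(S^{(\ell)},A^{(k)})$ and $S_3=S^{(1)}\otimes S^{(2)}\otimes S^{(3)}$ from \S\ref{ssec:Lap_Op}, a single matrix block $(A_G)_{{\bf k}{\bf m}}(\mu,\nu)={\bf g}_{{\bf k},\mu}^T A_3\,{\bf g}_{{\bf m},\nu}$ decomposes into a sum of $d$ products of three univariate inner products of the form $({\bf g}_{{\bf k},\mu}^{(\ell)})^T A^{(\ell)}{\bf g}_{{\bf m},\nu}^{(\ell)}$ or $({\bf g}_{{\bf k},\mu}^{(\ell)})^T S^{(\ell)}{\bf g}_{{\bf m},\nu}^{(\ell)}$; since $A^{(\ell)},S^{(\ell)}$ are tridiagonal, each such inner product costs $O(\widehat{n})$, and each of the $m_0^2$ block entries costs $O(\widehat{n})$, giving the bound in (A). The same argument applies verbatim to $S_G$.

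For assertion (B), I would invoke the overlap property \eqref{eqn:Overlap_Basis}: if $|k_\ell-m_\ell|\geq L_0$ in some coordinate direction, then ${\bf g}_{{\bf k},\mu}^{(\ell)}\odot{\bf g}_{{\bf m},\nu}^{(\ell)}=0$, and since the tridiagonal factors $A^{(\ell)},S^{(\ell)}$ have bandwidth one, the univariate inner products $({\bf g}_{{\bf k},\mu}^{(\ell)})^T A^{(\ell)}{\bf g}_{{\bf m},\nu}^{(\ell)}$ and the mass analogue also vanish once the supports are separated by more than one grid-cell — which is subsumed by the overlap condition after possibly enlarging $L_0$ by a bounded amount (and the statement simply takes $L_0$ to be this effective overlap). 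Hence a block $(A_G)_{{\bf k}{\bf m}}$ or $(S_G)_{{\bf k}{\bf m}}$ can be nonzero only if $|k_\ell-m_\ell|<L_0$ for all $\ell=1,2,3$, which gives at most $(2L_0+1)^3$ nonzero blocks per block row, and by symmetry of $A_3,S_3$ (hence of $A_G,S_G$) per block column as well.

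For assertion (C), the key observation is translation invariance of the generating matrices: because $A^{(\ell)}$ and $S^{(\ell)}$ are themselves Toeplitz (they are $\mathrm{tridiag}\{-1,2,-1\}/h$ and $h\,\mathrm{tridiag}\{1,4,1\}/6$), and because the lattice-translated basis vector ${\bf g}_{{\bf k},\mu}^{(\ell)}$ is obtained from ${\bf g}_{{\bf 0},\mu}^{(\ell)}$ by a shift of $n_0$ grid points (with zero extension), the univariate inner product $({\bf g}_{{\bf k},\mu}^{(\ell)})^T A^{(\ell)}{\bf g}_{{\bf m},\nu}^{(\ell)}$ depends on $k_\ell,m_\ell$ only through the difference $k_\ell-m_\ell$, away from the boundary cells — and for a lattice in a box, away from the boundary means everywhere the block is nonzero, since a nonzero block requires $|k_\ell-m_\ell|<L_0$ and the generating matrices see only the $O(L_0)$-neighbourhood, whose shifted structure is genuinely periodic in the interior. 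Thus $(A_G)_{{\bf k}{\bf m}}=A_{{\bf k}-{\bf m}}$ and $(S_G)_{{\bf k}{\bf m}}=S_{{\bf k}-{\bf m}}$, which is precisely the $3$-level block-Toeplitz property; the storage bound $m_0^2(L_0+1)^3 L^3$ then follows from (B) exactly as in Lemma \ref{lem:SparseCaseE}(b). The main obstacle is the careful bookkeeping in (C): one must check that the Dirichlet zero-extension of the translated basis functions at the two ends of the box does not spoil the shift-invariance of the inner products for the blocks that are actually nonzero; the resolution is that the factors $A^{(\ell)},S^{(\ell)}$ are banded and the basis functions are supported away from $\partial\Omega_L$ by construction of the formation cell $\Omega_0\subset\Omega$, so the truncation never reaches into a nonzero block — making the interior Toeplitz structure exact rather than approximate for $A_G$ and $S_G$ (in contrast to the nuclear potential part $V_{c_L}$, where the long-range lattice sum does produce the near-boundary defect of item (B) in \S\ref{ssec:Core_Ham_gener}).
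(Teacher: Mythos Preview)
Your proposal is correct and follows essentially the same approach as the paper: for (A) you exploit the Kronecker product structure of $A_3$ and $S_3$ to reduce each matrix entry to a short sum of products of univariate inner products (the paper packages this as the Hadamard-product factorizations $S_G=\odot_\ell({G^{(\ell)}}^T S^{(\ell)} {G}^{(\ell)})$ and the analogous $d$-term sum for $A_G$), and for (B)--(C) you invoke the overlap condition and shift invariance, which is exactly what the paper means by ``similar arguments as in Lemma~\ref{lem:SparseCaseE}''. Your treatment of (C) is in fact more explicit than the paper's --- you spell out why the Toeplitz (not merely banded) structure is exact here despite the Dirichlet truncation, in contrast to $V_{c_L}$ --- but this is elaboration along the same line, not a different route.
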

\begin{proof}
First, notice that the matrix entries in $A_G=\{a_{k m}\}$ and 
$S_G=\{s_{k m}\}$, ($k,m = 1,...,m_0$) can be represented in the product form.
For example, we have
 \[
 s_{k m}= \langle S_3 {\bf g}_k, {\bf g}_m\rangle = 
 {\prod}_{\ell=1}^3 {{\bf g}_m^{(\ell)}}^T S^{(\ell)} {\bf g}_k^{(\ell)}.
\]
%and similarly the $d$-term representation for the $d$-Laplacian.

Combining this representation with (\ref{eqn:Lapl_Mass_Gal}) 
implies the matrix factorization 
\begin{equation}\label{eqn:Mass_Gal_fact}
S_G=G^T (S^{(1)} \otimes S^{(2)} \otimes S^{(3)}) G=  
({G^{(1)}}^T S^{(1)} {G}^{(1)})\odot ({G^{(2)}}^T S^{(2)}{G}^{(2)})
\odot ({G^{(3)}}^T S^{(3)}{G}^{(3)}),
\end{equation}
where $\odot$ means the Hadamard product of matrices.
The similar $d$-term sum of products representing  matrix elements 
$a_{k m}= \langle A_3 {\bf g}_k, {\bf g}_m\rangle$,
\[
\langle A_3 {\bf g}_k, {\bf g}_m\rangle = \sum_{p=1}^d {\prod}_{(d \setminus p)}
({{\bf g}_m^{(\ell)}}^T S^{(\ell)}{\bf g}_k^{(\ell)},{{\bf g}_m^{(p)}}^T A^{(p)} {\bf g}_k^{(p)}),
\]
leads to the $d$-term factorized representation of $A_G$ (say, $d=3$),
 \begin{equation} \label{eqn:Lapl_Gal_fact}
 A_G= \sum_{k=1}^3 \odot_{(d \setminus k)}
 ({G^{(\ell)}}^T S^{(\ell)} {G}^{(\ell)},{G^{(k)}}^T A^{(k)} {G}^{(k)} ).
\end{equation}
This proves the numerical cost for the matrix evaluation. Items (B) and (C) can be justified
by similar arguments as in Lemma \ref{lem:SparseCaseE}.
\end{proof}

\begin{remark}
Notice that in the periodic case both matrices, $A_G$ and $S_G$, possess  the
three-level block circulant structure as discussed in \S\ref{ssec:Core_Ham_period}
(see Appendix for definitions). 
\end{remark}

\section{Tensor factorization meets FFT block-diagonalization}
\label{sec:Core_Ham_period_FFT}

There are two basic approaches to mathematical modeling of the $L$-periodic molecular 
systems composed of $L\times L \times L$ elementary unit cells \cite{SzOst:1996}.  
In  the first approach, 
the system is supposed to contain an infinite set of equivalent atoms that 
map identically into itself under any translation by $L$ units in each spacial direction. 
The other model is based on the ring-type periodic structures 
consisting of $L$ identical units in each spacial direction, where every
unit cell of the periodic compound will be mapped to itself by applying a rotational 
transform  from the corresponding rotational group symmetry.

The main difference between these two concepts is in the treatment of the lattice sum 
of Coulomb interactions, thought, at the limit of $L\to \infty$ both models 
approach each other.
In this paper we mainly follow the first approach with the particular 
focus on the asymptotic complexity optimization  
for large lattice parameter $L$. The second concept is useful
for understanding the block structure of the Galerkin matrices for the Hartree-Fock 
operator.

The direct Hartree-Fock calculations for lattice structured systems 
in the localized GTO-type basis lead to the 
symmetric block circulant/Toeplitz matrices,    % \ref{sec_Append:ML-block-circToepl}), 
where the first-level blocks, $A_0,...,A_{L-1}$, may have further block structures 
to be discussed in what follows (see also Appendix).
In particular, the Galerkin approximation to the 3D Hartree-Fock core Hamiltonian 
in periodic setting leads to the symmetric, three-level block circulant matrix, see 
\S\ref{ssec_Append:ML_block-circ} 
concerning the definition of multilevel block circulant (MBC) matrices.

\subsection{Block-diagonal form of the system matrix}
\label{ssec:Tensor_bdiag}

In this paragraph, we introduce the new data-sparse block structure  by imposing 
the low-rank tensor factorizations within the diagonalized  MBC matrix
in the matrix class ${\cal BC} (d,{\bf L},m_0)$, where ${\bf L}=(L_1,...,L_d)$.

The block-diagonal form of a MBC matrix is well known in the literature, see e.g. \cite{Davis}.
A diagonalization of a $d$-level MBC matrix is based on representation via a sequence 
of cycling permutation matrices 
$\pi_{L_1}, ...,\pi_{L_d}$, $d=1,2,3, ...$. Recall that the $d$-dimensional Fourier transform (FT) 
can be defined via the Kronecker product of the univariate FT matrices (Kronecker rank-$1$ operator), 
$$
F_{\bf L}=F_{L_1}\otimes \cdots \otimes F_{L_d}.
$$

Here we prove the diagonal representation in a form that is useful for the description
of tensor-based numerical algorithms. 
To that end we generalize the notations ${\cal T}_L$ and $\widehat{A}$ 
(see Appendix, \S \ref{ssec_Append:block-circ}) to the class of multilevel matrices.
We denote by $\widehat{A}\in \mathbb{R}^{|{\bf L}|m_0\times m_0}$ the first 
block column of a matrix $A\in {\cal BC} (d,{\bf L},m_0)$, with a shorthand notation 
$$
\widehat{A}=[A_0,A_1,...,A_{L_1-1}]^T,
$$ 
% so that the $n\times m\times m$ tensor ${\cal T}_{\bf n} \widehat{A}$ represents slice-wise
% all generating $m\times m$ matrix blocks. Furthermore, 
and define a $|{\bf L}|\times m_0 \times m_0$ tensor ${\cal T}_{\bf L} \widehat{A}$, which 
represents slice-wise all generating $m_0\times m_0$ matrix blocks in $\widehat{A}$ 
(reshaping of $\widehat{A}$).
Notice that in the case $m_0=1$, the matrix $\widehat{A}\in \mathbb{R}^{|{\bf L}|\times 1}$ 
represents the first column of $A$.
Now the Fourier transform $F_{\bf L}$ applies to ${\cal T}_{\bf L} \widehat{A}$ column-wise, 
while the backward reshaping of the resultant tensor, ${\cal T}_{\bf L}'$, returns 
an $|{\bf L}|m_0 \times m_0$ block matrix column. 
In the following we use the conventional matrix indexing and 
assume that the lattice ${\bf k}$-index runs as $k_\ell=0,1,..., L_\ell-1$.

\begin{lemma}\label{lem:DiagMLCirc}
A matrix $A\in {\cal BC} (d,{\bf L},m_0)$ can be converted to the block-diagonal form  by 
the Fourier transform $F_{\bf L}$,
\begin{equation} \label{eqn:DiagMLcirc}
A= (F_{\bf L}^\ast \otimes I_{m_0}) \operatorname{bdiag}_{m_0\times m_0} 
\{ \bar{A}_{\bf 0}, \bar{A}_{\bf 1},\ldots , 
\bar{A}_{\bf L-1}\}(F_{\bf L} \otimes I_{m_0}),
\end{equation} 
where
\[
  \left[ \bar{A}_{\bf 0}, \bar{A}_{\bf 1},\ldots , \bar{A}_{\bf L-1}\right]^T = 
 {\cal T}_{\bf L}'(F_{\bf L} ({\cal T}_{\bf L} \widehat{A})).
\]
\end{lemma}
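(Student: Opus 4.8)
The plan is to reduce the multilevel statement to the well-known single-level block-circulant diagonalization and then iterate over the $d$ levels, keeping careful track of how the reshaping operators $\mathcal{T}_{\bf L}$ and $\mathcal{T}_{\bf L}'$ interact with the Kronecker-product Fourier transform $F_{\bf L}=F_{L_1}\otimes\cdots\otimes F_{L_d}$. First I would recall from the Appendix (\S\ref{ssec_Append:block-circ}) the base case $d=1$: a matrix $A\in\mathcal{BC}(1,L,m_0)$ built from generating blocks $A_0,\dots,A_{L-1}$ is diagonalized as $A=(F_L^\ast\otimes I_{m_0})\operatorname{bdiag}\{\bar A_0,\dots,\bar A_{L-1}\}(F_L\otimes I_{m_0})$, where the transformed blocks are obtained by applying $F_L$ along the cyclic index, i.e. $\bar A_j=\sum_{k=0}^{L-1}\omega_L^{jk}A_k$ with $\omega_L=e^{-2\pi i/L}$. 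The identity here is just the statement that the cyclic shift $\pi_L$ is diagonalized by $F_L$, lifted to $m_0\times m_0$ blocks via the mixed-product property of the Kronecker product $(\pi_L\otimes I_{m_0})=(F_L^\ast\otimes I_{m_0})(D\otimes I_{m_0})(F_L\otimes I_{m_0})$, and a block-circulant matrix is by definition a block-polynomial in $\pi_L$.

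Next I would set up the induction on the number of levels $d$. A matrix $A\in\mathcal{BC}(d,{\bf L},m_0)$ is, by definition, block-circulant at the outermost level with generating blocks $A_0,\dots,A_{L_1-1}$ that themselves lie in $\mathcal{BC}(d-1,(L_2,\dots,L_d),m_0)$. Applying the $d=1$ result with "block size" $|{\bf L}'|m_0$ where ${\bf L}'=(L_2,\dots,L_d)$, we get $A=(F_{L_1}^\ast\otimes I_{|{\bf L}'|m_0})\operatorname{bdiag}\{\bar A_0,\dots,\bar A_{L_1-1}\}(F_{L_1}\otimes I_{|{\bf L}'|m_0})$, with $\bar A_{j}=\sum_{k}\omega_{L_1}^{jk}A_k\in\mathcal{BC}(d-1,{\bf L}',m_0)$. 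Applying the induction hypothesis to each $\bar A_j$ diagonalizes it by $F_{{\bf L}'}\otimes I_{m_0}$, and assembling the outer and inner Fourier factors via $(F_{L_1}\otimes I_{|{\bf L}'|})(I_{L_1}\otimes F_{{\bf L}'})=F_{L_1}\otimes F_{{\bf L}'}=F_{\bf L}$ (again the mixed-product property) yields the claimed factorization (\ref{eqn:DiagMLcirc}) with the innermost $I_{m_0}$ surviving throughout. It remains to identify the resulting diagonal blocks, indexed by the multi-index ${\bf j}=(j_1,\dots,j_d)$, as $\bar A_{\bf j}=\sum_{{\bf k}}\prod_{\ell}\omega_{L_\ell}^{j_\ell k_\ell}A_{\bf k}$; this is exactly the componentwise action of $F_{\bf L}$ on the tensor $\mathcal{T}_{\bf L}\widehat A$ whose ${\bf k}$-slice is $A_{\bf k}$, followed by the backward reshaping $\mathcal{T}_{\bf L}'$ into a block column. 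That is precisely the formula $[\bar A_{\bf 0},\dots,\bar A_{\bf L-1}]^T=\mathcal{T}_{\bf L}'(F_{\bf L}(\mathcal{T}_{\bf L}\widehat A))$ in the statement.

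The main obstacle is bookkeeping rather than conceptual: one has to make precise that "$F_{\bf L}$ applied column-wise to the tensor $\mathcal{T}_{\bf L}\widehat A$" means, for each fixed pair of block-internal indices $(\mu,\nu)\in\{1,\dots,m_0\}^2$, the $d$-dimensional DFT of the array $\big(A_{\bf k}(\mu,\nu)\big)_{{\bf k}\in\mathcal{K}}$, and that this commutes correctly with the reshaping operators so that the ordering of blocks in $\operatorname{bdiag}$ matches the lexicographic (or chosen) ordering implicit in $F_{\bf L}=F_{L_1}\otimes\cdots\otimes F_{L_d}$ and in the multi-index ${\bf L}-1=(L_1-1,\dots,L_d-1)$. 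I would handle this by fixing the convention that $\widehat A$ stacks the first block column in the same multi-index order used to define ${\cal BC}(d,{\bf L},m_0)$, observing that $\mathcal{T}_{\bf L}$ is by construction the inverse reshaping of $\mathcal{T}_{\bf L}'$ on the relevant index range, and then the associativity of Kronecker products guarantees the index orders agree. Once the conventions are pinned down, the proof is the two-line induction above; I would present it at that level of detail, citing \cite{Davis} for the classical one-level fact and noting that the whole computation is nothing more than repeated use of the mixed-product property $(X\otimes Y)(Z\otimes W)=(XZ)\otimes(YW)$ together with the diagonalization of a single cyclic permutation.
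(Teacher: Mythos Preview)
Your proposal is correct and rests on the same two ingredients as the paper's proof: the polynomial representation of a block circulant in the cyclic shift $\pi_L$ together with $\pi_L=F_L^\ast D_L F_L$, and the mixed-product property of the Kronecker product. The organization differs, however. The paper does not induct on $d$; instead it fixes $d=3$, fully expands the nested block-circulant structure into the single sum
\[
A=\sum_{k_1=0}^{L_1-1}\sum_{k_2=0}^{L_2-1}\sum_{k_3=0}^{L_3-1}\pi_{L_1}^{k_1}\otimes\pi_{L_2}^{k_2}\otimes\pi_{L_3}^{k_3}\otimes A_{k_1k_2k_3},
\]
diagonalizes all three shift factors simultaneously to obtain $\sum_{\bf k} D_{L_1}^{k_1}\otimes D_{L_2}^{k_2}\otimes D_{L_3}^{k_3}\otimes A_{\bf k}$ sandwiched between $F_{\bf L}^\ast\otimes I_{m_0}$ and $F_{\bf L}\otimes I_{m_0}$, and then remarks that $d>3$ is analogous. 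Your inductive version peels off one level at a time and reassembles the Fourier factors via $(F_{L_1}\otimes I)(I\otimes F_{{\bf L}'})=F_{\bf L}$; this handles arbitrary $d$ uniformly and is arguably cleaner, while the paper's explicit expansion makes the structure of the diagonal blocks for $d=3$ (which is the case actually used later, e.g.\ in Theorem~\ref{thm:tens_FFT}) completely transparent. Substantively the two arguments are equivalent.
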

\begin{proof} First, we confine ourself to the case of three-level matrices, i.e. $d=3$. 
We apply the Kronecker product decomposition (\ref{eqn:bcircPol}) successively to each 
level of the block-circulant $A$ to obtain (see (\ref{eqn:perShift}) for the definition of $\pi_{L}$)
\begin{align*}%\label{eqn:MLbcircDiag}
A & = \sum\limits^{L_1 -1}_{k_1=0} \pi_{L_1}^{k_1} \otimes { A}_{k_1} \\  \nonumber 
%       = \sum\limits^{n_1 -1}_{k_1=0} \pi_{n_1}^{k_1}\otimes 
%       (\sum\limits^{n_2 -1}_{k_2=0} \pi_{n_2}^{k_2}\otimes {\bf A}_{k_1 k_2} )\\  \nonumber
  & = \sum\limits^{L_1 -1}_{k_1=0} \pi_{L_1}^{k_1}\otimes 
      (\sum\limits^{L_2 -1}_{k_2=0} \pi_{L_2}^{k_2}\otimes { A}_{k_1 k_2} )=
      \sum\limits^{L_1 -1}_{k_1=0}\sum\limits^{L_2 -1}_{k_2=0}
      \pi_{L_1}^{k_1}\otimes \pi_{L_2}^{k_2}\otimes { A}_{k_1 k_2}  \\ \nonumber 
  & = \sum\limits^{L_1 -1}_{k_1=0}\sum\limits^{L_2 -1}_{k_2=0}\sum\limits^{L_3 -1}_{k_3=0} 
       \pi_{L_1}^{k_1}\otimes \pi_{L_2}^{k_2}\otimes \pi_{L_3}^{k_3}\otimes A_{k_1 k_2 k_3},    \nonumber
\end{align*} 
where ${ A}_{k_1}\in \mathbb{R}^{L_2 L_3 m_0 \times L_2 L_3 m_0}$,
${ A}_{k_1 k_2} \in  \mathbb{R}^{L_3  m_0\times L_3 m_0}$
and $A_{k_1 k_2 k_3}\in \mathbb{R}^{m_0\times m_0}$. 

Diagonalizing the periodic shift matrices $\pi_{L_1}^{k_1}, \pi_{L_2}^{k_2}$, and 
$\pi_{L_3}^{k_3}$ via the 1D Fourier transform (see Appendix), we arrive at the block-diagonal
representation
\begin{align}\label{eqn:MLbcircDiag2}
A & = (F_{\bf L}^\ast \otimes I_{m_0}) \left[\sum\limits^{L_1 -1}_{k_1=0}\sum\limits^{L_2 -1}_{k_2=0}
      \sum\limits^{L_3 -1}_{k_3=0}
      D_{L_1}^{k_1}\otimes D_{L_2}^{k_2}\otimes D_{L_3}^{k_3}\otimes A_{k_1 k_2 k_3} \right]
 (F_{\bf L}\otimes I_{m_0})
      \\ 
  & = (F_{\bf L}^\ast \otimes I_{m_0}) 
 \mbox{bdiag}_{m_0\times m_0} \{{\cal T}_{\bf L}'(F_{\bf L} ({\cal T}_{\bf L} \widehat{A}))\} 
(F_{\bf L}\otimes I_{m_0}),\nonumber
 \end{align}
where the monomials of diagonal matrices $D_{L_\ell}^{k_\ell}\in \mathbb{R}^{L_\ell \times L_\ell}$, 
$\ell=1,2,3$ are defined by (\ref{eqn:diagshift}).
The generalization to the case $d >3$ can be proven by the similar argument.
\end{proof}

Taking into account representation (\ref{eqn:symBc}),
the multilevel symmetric block circulant matrix can be described in form 
(\ref{eqn:DiagMLcirc}), such that all real-valued diagonal blocks remain symmetric.

The following remark compares the properties of circulant and Toeplitz matrices. 
\begin{remark}\label{rem:BToepl}
A block Toeplitz matrix  does not allow explicit diagonalization by FT as it is
the case for block circulant matrices.
However, it is well known that a block Toeplitz matrix can be extended to the 
double-size (at each level)
block circulant that makes it possible the efficient matrix-vector multiplication, 
and, in particular, the efficient application of power method for finding its
senior eigenvalues.
\end{remark}

\subsection{Low-rank tensor structure within diagonalized block matrix}
\label{ssec:Tensor_bcirc}

In the particular case $d=3$, the general block-diagonal representation  (\ref{eqn:MLbcircDiag2})
% (\ref{eqn:DiagMLcirc}) - (\ref{eqn:MLbcircDiag2}) takes form
% \begin{equation} \label{eqn:BD3_circ}
%  A= (F_{\bf L}^\ast \otimes I_{m_0}) (\sum\limits^{L_1 -1}_{k_1=0}\sum\limits^{L_2 -1}_{k_2=0}
%       \sum\limits^{L_3 -1}_{k_3=0}
%   D_{L_1}^{k_1}\otimes D_{L_2}^{k_2}\otimes D_{L_3}^{k_3}\otimes A_{k_1 k_2 k_3} ) 
%   (F_{\bf L}\otimes I_{m_0}),
% \end{equation}
allows the reduced storage cost for the coefficients tensor 
$[A_{k_1 k_2 k_3}]$ to the order of $O(|{\bf L}| m_0^2)$, where $|{\bf L}|=L_1 L_2 L_3$.
 Introduce the short notation 
$D_{\bf L}^{\bf k}=  D_{L_1}^{k_1}\otimes D_{L_2}^{k_2}\otimes \cdots \otimes D_{L_d}^{k_d}$,
then (\ref{eqn:MLbcircDiag2}) takes a form
\[
 A= (F_{\bf L}^\ast \otimes I_{m_0})(\sum\limits^{\bf L -1}_{\bf k=0}
D_{\bf L}^{\bf k}\otimes  A_{\bf k}) (F_{\bf L} \otimes I_{m_0}).
\]
For large $L$ the numerical cost becomes prohibitive. 
However, the above representation indicates that the further storage and complexity reduction 
can be possible if
the third-order coefficients tensor ${\bf A}= [A_{k_1 k_2 k_3}]$, $k_\ell=0,...,L_\ell-1$,  
with the matrix-valued entries $A_{k_1 k_2 k_3}\in \mathbb{R}^{m_0\times m_0}$, 
allows the low-rank tensor factorization (approximation) in the multiindex ${\bf k}=(k_1, k_2, k_3)$,
which can be described by a smaller then $L^3$ number of parameters.

%and  $A_{\bf k}= [A_{k_1 k_2 ... k_q}]$ with $m\times m$ matrix elements.

To fix the idea, let us assume the existence of rank-$1$ separable tensor factorization,
\begin{equation} \label{eqn:SepMatrBlock}
 A_{k_1 k_2 k_3} = A_{k_1}^{(1)}\odot A_{k_2}^{(2)} \odot A_{k_3}^{(3)},
 \quad A_{k_1}^{(1)}, A_{k_2}^{(2)},A_{k_3}^{(3)} \in \mathbb{R}^{m_0\times m_0},
\quad \mbox{for} \quad k_\ell=0,...,L_\ell-1.
\end{equation}
%where $\odot$ denotes the Hadamard (pointwise) product of matrices.

Given $\ell \in \{1,...,d\}$ and a matrix $G\in \mathbb{R}^{L_\ell \times L_\ell}$, define 
the {\it tensor prolongation} (lifting) mapping, 
${\cal P}_\ell: \mathbb{R}^{L_\ell\times L_\ell}\to \mathbb{R}^{|{\bf L}|\times |{\bf L}|}$, by
\begin{equation} \label{eqn:Tensor_prolong}
{\cal P}_\ell(G):= \left(\bigotimes_{i=1}^{\ell-1}I_{L_i}\right)
\otimes G \otimes \left(\bigotimes_{i=\ell+1}^{d}I_{L_i}\right).
\end{equation}
The following theorem introduces the new multilevel block-circulant tensor-structured
matrix format, 
where the coefficient tensor  ${\bf A}$ is represented via the low-rank factorization.
\begin{theorem}\label{thm:tens_FFT}
Assume the separability of a tensor $[A_{\bf k}]$ in  the ${\bf k}$ space in the 
form (\ref{eqn:SepMatrBlock}), 
then the $3$-level block-circulant matrix $A$ can be represented in the factorized block-diagonal 
form as follows 
\begin{equation} \label{eqn:Tensor_Circl_form}
A= (F_{\bf L}^\ast \otimes I_{m_0}) D_A (F_{\bf L}\otimes I_{m_0}),
% \left[ {\cal P}_1(\mbox{bdiag}F_{L_1} {\bf A}^{(1)}) \odot
%   {\cal P}_2(\mbox{bdiag}F_{L_2} {\bf A}^{(2)})\odot 
%   {\cal P}_3 (\mbox{bdiag}F_{L_3} {\bf A}^{(3)})\right] (F_{\bf L}\otimes I_{m_0}),
\end{equation}
where the block-diagonal matrix $D_A$ with the block size $ m_0 \times m_0$ is given by
\[
 D_A = {\cal P}_1(\mathrm{bdiag}F_{L_1} {\bf A}^{(1)}) \odot
  {\cal P}_2(\mathrm{bdiag}F_{L_2} {\bf A}^{(2)})\odot 
  {\cal P}_3 (\mathrm{bdiag}F_{L_3} {\bf A}^{(3)}),
\]
with tri-tensors ${\bf A}^{(\ell)}=[A_{0}^{(\ell)},...,A_{L_\ell-1}^{(\ell)}]^T
\in \mathbb{R}^{L_\ell\times m_0 \times m_0}$ defined by
 concatenation of $\ell$-factors in (\ref{eqn:SepMatrBlock}).
%${\bf A}^{(\ell)}=[A_{0}^{(\ell)},...,A_{L_\ell-1}^{(\ell)}]^T$.
\end{theorem}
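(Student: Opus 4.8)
The plan is to bootstrap off the block-diagonal factorization already proved in Lemma~\ref{lem:DiagMLCirc}. That lemma gives, for any $A\in{\cal BC}(3,{\bf L},m_0)$,
\[
A=(F_{\bf L}^\ast\otimes I_{m_0})\,\operatorname{bdiag}_{m_0\times m_0}\{\bar A_{\bf 0},\ldots,\bar A_{\bf L-1}\}\,(F_{\bf L}\otimes I_{m_0}),
\qquad
[\bar A_{\bf k}]={\cal T}_{\bf L}'\!\left(F_{\bf L}({\cal T}_{\bf L}\widehat A)\right),
\]
so it suffices to show that the hypothesis (\ref{eqn:SepMatrBlock}) is \emph{inherited} by the diagonal-block tensor $[\bar A_{\bf k}]$, with each rank-one factor replaced by its one–dimensional Fourier transform. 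First I would unwind the definition of $[\bar A_{\bf k}]$ as the slice-wise (column-wise) action of $F_{\bf L}$ on the generating tensor: using the Kronecker product structure $F_{\bf L}=F_{L_1}\otimes F_{L_2}\otimes F_{L_3}$, the $({\bf k},{\bf j})$ entry factorizes as $(F_{L_1})_{k_1 j_1}(F_{L_2})_{k_2 j_2}(F_{L_3})_{k_3 j_3}$ under the standard multi-index ordering, whence
\[
\bar A_{\bf k}=\sum_{j_1,j_2,j_3}(F_{L_1})_{k_1 j_1}(F_{L_2})_{k_2 j_2}(F_{L_3})_{k_3 j_3}\,A_{j_1 j_2 j_3}.
\]

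The heart of the argument is the next step: substitute the separable form $A_{j_1 j_2 j_3}=A^{(1)}_{j_1}\odot A^{(2)}_{j_2}\odot A^{(3)}_{j_3}$ and push the three summations inside the individual factors using multilinearity of the Hadamard product. Since $\odot$ is bilinear in each argument, $(\sum_{j_1}c_{j_1}X_{j_1})\odot(\sum_{j_2}d_{j_2}Y_{j_2})\odot(\sum_{j_3}e_{j_3}Z_{j_3})=\sum_{j_1,j_2,j_3}c_{j_1}d_{j_2}e_{j_3}\,X_{j_1}\odot Y_{j_2}\odot Z_{j_3}$, and the scalar weights above are exactly the Kronecker-factored Fourier entries, so
\[
\bar A_{\bf k}=\Bigl(\sum_{j_1}(F_{L_1})_{k_1 j_1}A^{(1)}_{j_1}\Bigr)\odot\Bigl(\sum_{j_2}(F_{L_2})_{k_2 j_2}A^{(2)}_{j_2}\Bigr)\odot\Bigl(\sum_{j_3}(F_{L_3})_{k_3 j_3}A^{(3)}_{j_3}\Bigr).
\]
Each of the three factors is, by definition, the $k_\ell$-th $m_0\times m_0$ block of $\operatorname{bdiag}F_{L_\ell}{\bf A}^{(\ell)}$, i.e.\ the blockwise DFT of the tri-tensor ${\bf A}^{(\ell)}=[A^{(\ell)}_0,\ldots,A^{(\ell)}_{L_\ell-1}]^T$ along its first mode.

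It remains to recognise that the right-hand side is precisely the $\bf k$-th diagonal block of $D_A={\cal P}_1(\operatorname{bdiag}F_{L_1}{\bf A}^{(1)})\odot{\cal P}_2(\operatorname{bdiag}F_{L_2}{\bf A}^{(2)})\odot{\cal P}_3(\operatorname{bdiag}F_{L_3}{\bf A}^{(3)})$. Interpreting the prolongation ${\cal P}_\ell$ of (\ref{eqn:Tensor_prolong}) blockwise — the identities $I_{L_i}$ acting on $m_0\times m_0$ blocks, so $\operatorname{bdiag}F_{L_\ell}{\bf A}^{(\ell)}$ is of size $L_\ell m_0\times L_\ell m_0$ — the $\bf k$-th diagonal block of ${\cal P}_\ell(\cdot)$ equals the $k_\ell$-th block of its argument, independently of the remaining indices. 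Hence the Hadamard product of the three prolonged block-diagonal matrices is again block-diagonal, and its $\bf k$-th block is the triple Hadamard product of the one-dimensional transforms evaluated at $k_1,k_2,k_3$, which coincides with $\bar A_{\bf k}$ above. Plugging $D_A$ into Lemma~\ref{lem:DiagMLCirc} then gives (\ref{eqn:Tensor_Circl_form}).

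I expect the only real subtlety to be notational bookkeeping: one must fix once and for all that all Kronecker, Hadamard and prolongation operations treat the $m_0\times m_0$ blocks as atomic entries (so $I_{L_i}$ really means $I_{L_i}\otimes I_{m_0}$, and the ``column-wise'' Fourier transform in ${\cal T}_{\bf L}'(F_{\bf L}(\cdot))$ commutes with this blocking), and that the multi-index ordering used for ${\cal T}_{\bf L}$ matches the one implied by $F_{L_1}\otimes F_{L_2}\otimes F_{L_3}$. No genuine analytic difficulty arises. Finally, the extension to a general separation rank $R$ — replacing (\ref{eqn:SepMatrBlock}) by a sum of $R$ rank-one terms — is immediate from linearity of $F_{\bf L}$: each summand transforms independently, yielding $D_A=\sum_{q=1}^R{\cal P}_1(\operatorname{bdiag}F_{L_1}{\bf A}^{(1)}_q)\odot{\cal P}_2(\operatorname{bdiag}F_{L_2}{\bf A}^{(2)}_q)\odot{\cal P}_3(\operatorname{bdiag}F_{L_3}{\bf A}^{(3)}_q)$; and the $d>3$ case follows by applying the same distributivity argument level by level, exactly as in the proof of Lemma~\ref{lem:DiagMLCirc}.
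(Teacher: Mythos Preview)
Your argument is correct and rests on the same core mechanism as the paper's proof: the rank-one separability $A_{\bf j}=A^{(1)}_{j_1}\odot A^{(2)}_{j_2}\odot A^{(3)}_{j_3}$ together with multilinearity of the Hadamard product, which allows the triple Fourier sum to decouple into three one-dimensional transforms.

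The organisation differs slightly. The paper stays at the global matrix level: it invokes the mixed Kronecker--Hadamard identity
\[
D_{L_1}^{k_1}\otimes D_{L_2}^{k_2}\otimes D_{L_3}^{k_3}\otimes(A^{(1)}_{k_1}\odot A^{(2)}_{k_2}\odot A^{(3)}_{k_3})
={\cal P}_1(D_{L_1}^{k_1}\otimes A^{(1)}_{k_1})\odot{\cal P}_2(D_{L_2}^{k_2}\otimes A^{(2)}_{k_2})\odot{\cal P}_3(D_{L_3}^{k_3}\otimes A^{(3)}_{k_3}),
\]
then pushes each $\sum_{k_\ell}$ through its own factor and recognises $\sum_{k_\ell}D_{L_\ell}^{k_\ell}\otimes A^{(\ell)}_{k_\ell}=\operatorname{bdiag}F_{L_\ell}{\bf A}^{(\ell)}$. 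You instead descend to the individual diagonal blocks $\bar A_{\bf k}$, write them as the entrywise-factored Fourier sum $\sum_{\bf j}\prod_\ell(F_{L_\ell})_{k_\ell j_\ell}\,A_{\bf j}$, and then re-assemble the block-diagonal matrix from the factored blocks. Your route is a touch more elementary (no need to verify the Kronecker--Hadamard identity above), while the paper's presentation keeps the manipulation structure-preserving at the full-matrix level; both reach the same destination, and your observation that ${\cal P}_\ell$ must be read blockwise (with $I_{L_i}$ standing for $I_{L_i}\otimes I_{m_0}$) is exactly the notational point the paper glosses over.
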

\begin{proof}
The diagonal blocks in (\ref{eqn:MLbcircDiag2}) can be written in 
the factorized tensor-product form
\begin{align*}\label{eqn:bcirc_R1}
& D_{L_1}^{k_1} \otimes D_{L_2}^{k_2}\otimes D_{L_3}^{k_3}\otimes A_{k_1 k_2 k_3} =\\
 = &
((D_{L_1}^{k_1}\otimes A_{k_1}^{(1)}) \otimes I_{L_2}\otimes I_{L_3}  )\odot
 (I_{L_1} \otimes (D_{L_2}^{k_2} \otimes A_{k_2}^{(2)})\otimes I_{L_3} ) \odot
 (I_{L_1} \otimes I_{L_2}\otimes (D_{L_3}^{k_3} \otimes A_{k_3}^{(3)})). 
\end{align*}
Combining this representation with (\ref{eqn:MLbcircDiag2})
leads to the powerful matrix factorization
\begin{align*}%\label{eqn:Abcirc_R1}
 A=& (F_{\bf L}^\ast \otimes I_{m_0}) 
 \left[\sum\limits^{L_1 -1}_{k_1=0}  {\cal P}_1(D_{L_1}^{k_1}\otimes A_{k_1}^{(1)}) \odot
 \sum\limits^{L_2 -1}_{k_2=0}   {\cal P}_2(D_{L_2}^{k_2} \otimes A_{k_2}^{(2)}) \odot
 \sum\limits^{L_3 -1}_{k_3=0}  {\cal P}_3(D_{L_3}^{k_3} \otimes A_{k_3}^{(3)})\right]
 (F_{\bf L}\otimes I_{m_0})\\
=& (F_{\bf L}^\ast \otimes I_{m_0}) 
 \left[ {\cal P}_1(\sum\limits^{L_1-1}_{k_1=0} D_{L_1}^{k_1}\otimes A_{k_1}^{(1)}) \odot
    {\cal P}_2(\sum\limits^{L_2 -1}_{k_2=0} D_{L_2}^{k_2} \otimes A_{k_2}^{(2)}) \odot
   {\cal P}_3(\sum\limits^{L_3 -1}_{k_3=0} D_{L_3}^{k_3} \otimes A_{k_3}^{(3)})\right]
 (F_{\bf L}\otimes I_{m_0})\\
 = & (F_{\bf L}^\ast \otimes I_{m_0})
 \left[ {\cal P}_1(\mbox{bdiag} F_{L_1} {\bf A}^{(1)}) \odot
  {\cal P}_2(\mbox{bdiag} F_{L_2} {\bf A}^{(2)})\odot
  {\cal P}_3(\mbox{bdiag} F_{L_3} \otimes {\bf A}^{(3)})\right]
 (F_{\bf L}\otimes I_{m_0}),
\end{align*}
where the tensor prolongation operator ${\cal P}_\ell$ is given by (\ref{eqn:Tensor_prolong}).
\end{proof}

%Generalization of (\ref{eqn:Tensor_Circl_form}) to the case of arbitrary $d$ is straightforward.
The expansion (\ref{eqn:Tensor_Circl_form}) includes only 1D Fourier transforms 
thus reducing the representation cost to 
$$
O(m_0^2 {\sum}_{\ell=1}^d L_\ell \log L_\ell).
$$ 
% with $|{\bf n}|_1=n_1+n_2 + ... + n_q$.
Moreover, and it is even more important, that the eigenvalue 
problem  for the large matrix $A$ now reduces to only 
 $L_1+L_2+L_3 \ll L_1 L_2 L_3$ independent small $m_0 \times m_0$ matrix eigenvalue problems.

The above block-diagonal representation for $d=3$  
generalizes easily to the case of arbitrary dimension $d$.
Furthermore,
the rank-$1$ decomposition (\ref{eqn:SepMatrBlock}) was considered for the ease of exposition only.
For instance, the above low-rank representations can be easily generalized to the case of 
canonical (CP) or Tucker formats in ${\bf k}$ space 
(see Proposition \ref{prop:low_rank_coef} below). In fact, both CP and Tucker formats
provide the additive structure which can be converted to the respective additive 
structure of the core coefficient in (\ref{eqn:Tensor_Circl_form}).

Notice that in the practically interesting 3D case
the use of MPS/TT type factorizations does not take the advantage over the Tucker format
since the Tucker and MPS ranks in 3D  appear to be close to each other.
Indeed, the HOSVD for a tensor of order $3$ leads to the same sharp rank estimates for both
the Tucker and TT tensor formats.

\subsection{Block circulant structure in the periodic core Hamiltonian}
\label{ssec:Core_Ham_period}

In this section we consider the periodic case, further called case (P), and
derive the more refined sparsity pattern of the matrix $V_{c_L}$ in (\ref{nuc_potMatrTot})
by using the $d$-level ($d=1,2,3$) tensor structure  in this matrix.
The matrix block entries are numbered by a pair of multi-indices, 
$V_{c_L}=\{V_{{\bf k}{\bf m}}\}$, ${\bf k}=(k_1,k_2,k_3)$, where the $m_0\times m_0$ 
matrix block $V_{{\bf k}{\bf m}}$ is defined by (\ref{eqn:nuc_MatrSparsP}).
% \begin{equation} \label{nuc_MatrSparsP}
% %  \widehat{v}_{km}=  \int_{\mathbb{R}^3} v_c(x) \widehat{g}_k(x) \widehat{g}_m(x) dx 
% % \approx  
% V_{{\bf k}{\bf m}} = \langle {\bf G}_{\bf k} \odot {\bf G}_{\bf m} ,   {\bf P}_{c_L}\rangle, 
% \quad 1\leq k_\ell, m_\ell \leq L,\quad \ell=1,2,3.
% \end{equation}
Figure \ref{fig:cube_box}  illustrates an example of 3D lattice-type structure of 
size $4\times 4\times 2$. 
  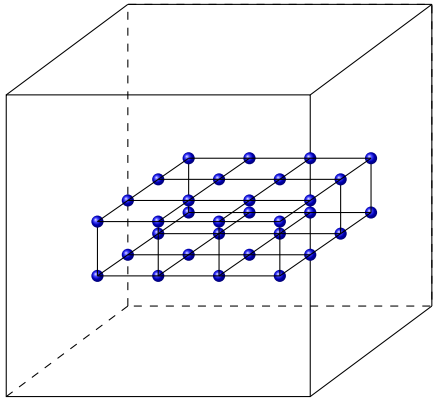
\begin{figure}[htb]
\centering
\begin{tikzpicture}[scale=0.4]

\draw (2,2) rectangle (12,12);
\path[draw] (2,12) -- (6,15);
\path[draw] (12,12) -- (16,15);

\draw[style=dashed] (6,5) rectangle (16,15);
\path[draw][style=dashed] (2,2) -- (6,5);
\path[draw] (12,2) -- (16,5);
\path[draw] (6,15) -- (16,15);
\path[draw] (16,5) -- (16,15);
--------------------------------------

\foreach \x in {5,7,9,11} {
%\foreach \y in {7.2,8.2,9.2,10.2,11.2,12.2} {
\shade[shading = ball, ball color = blue] (\x,6) circle (.2);
\shade[shading = ball, ball color = blue] (\x+1,6.7) circle (.2);
\shade[shading = ball, ball color = blue] (\x+2,7.4) circle (.2);
\shade[shading = ball, ball color = blue] (\x+3,8.1) circle (.2);
}
  \path[draw] (5,6) -- (11,6);

\foreach \x in {5,7,9,11} {
\path[draw] (\x,6) -- (\x+3,8.1);
}
\path[draw] (6,6.7) -- (12,6.7);
\path[draw] (7,7.4) -- (13,7.4);
\path[draw] (8,8.1) -- (14,8.1);

 %___________________________layer 2________________________________
\foreach \x in {5,7,9,11} {
\path[draw] (\x,6) -- (\x,7.8);
}
 \path[draw] (14,8.1) -- (14,9.9);
 \path[draw] (13,7.4) -- (13,9.2);
 \path[draw] (8,8.1) -- (8,9.9);

\foreach \x in {5,7,9,11} {
%\foreach \y in {7.2,8.2,9.2,10.2,11.2,12.2} {
\shade[shading = ball, ball color = blue] (\x,7.8) circle (.2);
\shade[shading = ball, ball color = blue] (\x+1,8.5) circle (.2);
\shade[shading = ball, ball color = blue] (\x+2,9.2) circle (.2);
\shade[shading = ball, ball color = blue] (\x+3,9.9) circle (.2);
}
\path[draw] (5,7.8) -- (11,7.8);

\foreach \x in {5,7,9,11} {
\path[draw] (\x,7.8) -- (\x+3,9.9);
}
\path[draw] (6,8.5) -- (12,8.5);
\path[draw] (7,9.2) -- (13,9.2);
\path[draw] (8,9.9) -- (14,9.9);

\end{tikzpicture}
\caption{Rectangular  $4\times 4\times 2$ lattice in a box.}
\label{fig:cube_box}
\end{figure}

Following \cite{VeBoKh:Ewald:14}, we introduce the periodic cells 
${\cal R}=  \mathbb{Z}^d$, $d=1,2,3$ for the $\bf k$ index, and consider a 3D $B$-periodic 
supercell $\Omega_L= B\times B\times B$, with $B= \frac{b}{2}[-L,L]$. 
The total electrostatic potential in the supercell $\Omega_L$ is obtained by, first,  
the lattice summation of the Coulomb potentials over  $\Omega_L$ for (rather large) $L$, 
but restricted to the central unit cell $\Omega_0$, and then by replication of the resultant
function to the whole supercell $\Omega_L$. 
Hence, in this construction, the total potential sum $v_{c_L}(x)$ is designated at each elementary 
unit-cell in $\Omega_L$ by the same value (${\bf k}$-translation invariant).
The electrostatic potential in each of $B$-periods can be obtained by copying the 
respective data from $\Omega_L$. 

The effect of the conditional convergence of
the lattice summation as $L\to \infty$ can be treated by using the  extrapolation 
to the limit (regularization) on 
a sequence of different lattice parameters $L, 2L, 3L, \ldots$ as described in \cite{VeBoKh:Ewald:14}.

Consider the case $d=3$ in the more detail.  
%Supposing for simplicity that $L$ is odd, $L=2p+1$, 
Recall that the reference value $v_{c_L}(x)$ will be computed at the central cell
$\Omega_0$, indexed by $(0,0,0)$, by summation over all contributions from $L^3$ 
elementary sub-cells in $\Omega_L$. For technical reasons here and in the following we
vary the summation index by $k_\ell=0,..., L-1$, to obtain
\begin{equation}\label{eqn:EwaldSumP}
 v_0(x)=   \sum\limits_{k_1,k_2,k_3=0}^{L-1} \sum_{\nu=1}^{M_0}
 \frac{Z_\nu}{\|{x} -a_\nu (k_1,k_2,k_3)\|}, \quad x\in \Omega_0.
\end{equation}

In the following, we use the same notations as in \S2.3.
The basis set in $\Omega_L$ is constructed by replication 
from the master unit cell $\Omega_0$ to the whole periodic lattice.
The tensor representation of the local lattice sum on the $n\times n \times n$ grid 
associated with $\Omega_0$ takes a form
\[
 {\bf P}_{\Omega_0} %Z_\nu \sum\limits_{k_1,k_2,k_3=1}^L  {\cal W}_{\nu({\bf k})}  {\bf P}_{\Omega_0}
= \sum_{\nu=1}^{M_0} Z_\nu \sum\limits_{k_1,k_2,k_3=0}^{L-1} \sum\limits_{r=1}^{R}
{\cal W}_{\nu({\bf k})} \widetilde{\bf p}^{(1)}_{r} 
\otimes \widetilde{\bf p}^{(2)}_{r} \otimes \widetilde{\bf p}^{(3)}_{r}
\in \mathbb{R}^{n\times n \times n},
\]
where the tensor ${\bf P}_{\Omega_0}$ of size $n\times n \times n$ allows the low-rank
expansion as in (\ref{eqn:EwaldTensorGl}) with the reference tensor 
$\widetilde{\bf P}_{R}$  defined by (\ref{eqn:master_pot}).
Here,  the $\Omega$-windowing operator,
$
{\cal W}_{\nu({\bf k})}={\cal W}_{\nu(k_1)}^{(1)}\otimes {\cal W}_{\nu(k_2)}^{(2)}
\otimes {\cal W}_{\nu(k_3)}^{(3)},
$
restricts onto the $n\times n \times n$ unit cell by shifting via the lattice vector 
${\bf k}=(k_1,k_2,k_3)$. This reduces both the computational and storage costs by a factor of $L$.

In the 3D case, we set $q=3$ in the notation for multilevel block-circulant (BC) matrix,
see Appendix.
Similar to the case of one-level BC matrices, we notice that a matrix 
$A\in {\cal BC} (3,{\bf L},m)$ of size $|{\bf L}| m \times  |{\bf L}| m$ is completely 
defined  by a $3$-rd order coefficients tensor ${\bf A}=[A_{k_1 k_2 k_3}]$ of size 
$L_1 \times L_2 \times L_3 $,
($k_\ell=0,...,L_\ell-1$, $\ell=1,2,3$) with $m\times m$ block-matrix entries,  
obtained by folding of the generating first column vector in $A$. 
 % We also denote ${\bf L}=(L_1,L_2,L_3)$.

\begin{lemma}\label{lem:SparseCaseP}
Assume that in case (P) the number of overlapping unit cells 
(in the sense of effective supports of basis functions) in each spatial direction 
does not exceed $L_0$. 
Then the Galerkin matrix $V_{c_L}=[V_{{\bf k}{\bf m}}]$ exhibits the symmetric, three-level block circulant 
Kronecker tensor-product form, i.e. $V_{c_L} \in {\cal BC} (3,{\bf L},m_0)$,
\begin{equation}\label{eqn:BC-Core}
V_{c_L}= \sum\limits_{k_1=0}^{L_1-1} \sum\limits_{k_2=0}^{L_2-1} \sum\limits_{k_3=0}^{L_3-1}
\pi_{L_1}^{k_1}\otimes \pi_{L_2}^{k_2}\otimes \pi_{L_3}^{k_3}\otimes A_{k_1 k_2 k_3}, 
\quad A_{k_1 k_2 k_3}\in \mathbb{R}^{m_0\times m_0},
\end{equation}
where the number of non-zero matrix blocks $A_{k_1 k_2 k_3}$ does not exceed $(L_0+1)^3$.
Similar properties hold for both the Laplacian and the mass matrix.

The required storage is bounded by $m_0^2 (L_0 + 1)^3$ independent of $L$.
The set of non-zero generating matrix blocks $\{ A_{k_1 k_2 k_3}\}$ can be calculated 
in $O(m_0^2 (L_0 + 1)^3 n)$ operations.

Furthermore, assume that the QTT ranks of the assembled canonical vectors do not exceed $r_0$. 
Then the numerical cost can be reduced to the logarithmic scale, $O( m_0^2 (L_0 + 1)^3 r_0^2 \log n)$.
\end{lemma}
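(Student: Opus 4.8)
The plan is to prove the structural claim first and then to read off sparsity, storage and cost as consequences; throughout I work in case (P). First I would write the $m_0\times m_0$ block $V_{{\bf k}{\bf m}}$ in the tensor-contraction form (\ref{eqn:nuc_MatrSparsP}) and exploit the two defining features of the periodic construction: (i) in the supercell $\Omega_L$ the potential $v_{c_L}$ is the lattice sum evaluated on the central cell and then \emph{replicated} to every cell, hence ${\bf k}$-translation invariant on the torus (cf.\ (\ref{eqn:EwaldSumP}) and the central-cell tensor ${\bf P}_{\Omega_0}$); (ii) the Galerkin basis on $\Omega_L$ is generated from the master-cell basis by the lattice shifts $b{\bf k}$ with periodic wrap-around, so that ${\bf G}_{{\bf k},\mu}$ is the cyclic shift of ${\bf G}_{0,\mu}$ by ${\bf k}$. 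Shifting the integration variable by $-b{\bf k}$ (equivalently, re-indexing the discrete contraction) and using the periodicity from (i) shows that $V_{{\bf k}{\bf m}}$ depends on ${\bf k},{\bf m}$ only through $({\bf k}-{\bf m})\bmod{\bf L}$; naming this common block $A_{k_1k_2k_3}$ and expanding the cyclic index shift through the commuting cyclic permutations $\pi_{L_\ell}$ gives exactly the Kronecker-sum representation (\ref{eqn:BC-Core}), i.e.\ $V_{c_L}\in{\cal BC}(3,{\bf L},m_0)$. The symmetry of the bilinear form, $V_{{\bf k}{\bf m}}(\mu,\nu)=V_{{\bf m}{\bf k}}(\nu,\mu)$, then becomes $A_{\bf j}^{T}=A_{(-{\bf j})\bmod{\bf L}}$, which is precisely the condition characterising a symmetric multilevel block circulant (see the Appendix).

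Next I would invoke the overlap identity (\ref{eqn:Overlap_Basis}), now read with cyclic distances on the torus: $A_{\bf j}$ vanishes as soon as some $j_\ell$ is at cyclic distance $\ge L_0$ from the origin, i.e.\ the coefficient tensor $[A_{\bf j}]$ is banded of width $L_0$ in each of the three levels. Together with the circulant symmetry just established, this leaves at most $(L_0+1)^3$ distinct nonzero generating blocks $A_{\bf j}$, each of size $m_0\times m_0$, so the storage is $m_0^2(L_0+1)^3$ and does not grow with $L$. For the Laplacian and the mass matrix nothing new is needed: Proposition \ref{prop:matr_GTO_Laplace} together with the remark following it already gives that, in the periodic geometry, the three-level block-\emph{Toeplitz} structure established there is replaced by a three-level block \emph{circulant}, while the band structure and the Hadamard-factorised expressions for $A_G$ and $S_G$ are unaffected.

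For the complexity, each entry $A_{\bf j}(\mu,\nu)$ is, by (\ref{eqn:nuc_pot}), the inner product of the rank-$1$ tensor ${\bf G}_{0,\mu}\odot{\bf G}_{{\bf j},\nu}$ with the canonical tensor ${\bf P}_{\Omega_0}$ on the $n\times n\times n$ grid, whose rank $R_c\le M_0R$ (after the rank optimisation of (\ref{eqn:core_tensSum})) is a fixed, $\varepsilon$-controlled parameter with $R_c=O(|\log\varepsilon|)$. Such a contraction collapses into $R_c$ products of three univariate inner products of length-$n$ vectors, hence costs $O(R_c\,n)$; multiplying by the $\le m_0^2(L_0+1)^3$ entries and absorbing $R_c$ into the constant gives the claimed $O(m_0^2(L_0+1)^3 n)$. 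If in addition the one-dimensional canonical vectors entering ${\bf G}_{0,\mu}$, ${\bf G}_{{\bf j},\nu}$ and the reference factors $\widetilde{\bf p}^{(\ell)}_q$ are represented in QTT form with ranks $\le r_0$, then each univariate inner product is computed in $O(r_0^2\log n)$ rather than $O(n)$ by standard QTT arithmetic \cite{KhQuant:09}, and the bound improves to $O(m_0^2(L_0+1)^3 r_0^2\log n)$.

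The one genuinely delicate point is the first step: one must check that the ``evaluate on the central cell, then replicate'' definition of the periodic potential is compatible with the cyclic, wrap-around translation of the localised basis, so that the block index lives in $\mathbb{Z}_{L_1}\times\mathbb{Z}_{L_2}\times\mathbb{Z}_{L_3}$ and not merely in $\mathbb{Z}^3$. This is exactly what upgrades the block-Toeplitz structure of case (B) to a genuine block circulant, and, combined with $A_{\bf j}^{T}=A_{(-{\bf j})\bmod{\bf L}}$, also what trims the number of generating blocks down to $(L_0+1)^3$. Everything after that — the storage count, the per-entry $O(R_c n)$ cost, and the QTT refinement — is routine bookkeeping once the rank structure of ${\bf P}_{\Omega_0}$ and of the basis tensors from \S\ref{Core_Hamil}--\S\ref{ssec:Core_Ham_gener} is in hand.
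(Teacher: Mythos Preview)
Your proposal is correct and follows essentially the same approach as the paper: you derive the shift invariance $V_{{\bf k}{\bf m}}=V_{({\bf k}-{\bf m})\bmod{\bf L}}$ from the translation invariance of the replicated potential and of the periodically shifted basis, upgrade block-Toeplitz to block circulant via the periodic wrap-around, use the overlap bound to count nonzero generating blocks, and then read off storage and per-entry cost from the rank-$R_c$ canonical structure and the QTT refinement. The paper's proof is organised the same way, the only cosmetic differences being that it writes out the recursive Kronecker unfolding of (\ref{eqn:BC-Core}) explicitly and records the rank-$1$/rank-$3$ factorisations for $S_{c_L}$ and $\Delta_{c_L}$ in place rather than deferring to Proposition~\ref{prop:matr_GTO_Laplace}.
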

\begin{proof}
First, we notice that the shift invariance property in the matrix 
$V_{c_L}=[V_{{\bf k}{\bf m}}]$  is a consequence of the
translation invariance in the canonical tensor ${\bf P}_{c_L}$ (periodic case), 
and in the tensor ${\bf G}_{\bf k}$ representing basis functions (by construction),
\begin{equation} \label{eqn:Basis_shift}
{\bf G}_{\bf k m}:= {\bf G}_{\bf k} \odot {\bf G}_{\bf m}= {\bf G}_{|{\bf k} -{\bf m}|} \quad 
\mbox{for} \quad 0\leq k_\ell, m_\ell \leq L-1, 
\end{equation}
so that we have
\begin{equation} \label{nuc_BCirculantP}
V_{{\bf k}{\bf m}} = V_{|{\bf k}-{\bf m}|}, \quad 0\leq k_\ell, m_\ell \leq L-1.
%= \langle {\bf G}_{\bf k} \odot {\bf G}_{\bf m} ,   {\bf P}_{\Sigma}\rangle, \quad 1\leq k_\ell, m_\ell \leq L,
\end{equation}
This ensures the perfect three-level block-Toeplitz structure of $V_{c_L}$ 
(compare with the case of a bounded box). 
Now the block circulant pattern characterizing the class ${\cal BC} (3,{\bf L},m_0)$ 
is imposed by the periodicity of a lattice-structured basis set.

To prove the complexity bounds we observe that a matrix 
$V_{c_L} \in {\cal BC} (3,{\bf L},m_0)$
can be represented in the Kronecker tensor product form (\ref{eqn:BC-Core}),
obtained by an easy generalization of (\ref{eqn:bcircPol}). In fact, we apply 
(\ref{eqn:bcircPol}) by successive slice-wise and fiber-wise unfolding
to obtain
\[
\begin{split}
 V_{c_L}
& = \sum\limits_{k_1=0}^{L_1-1}\pi_{L_1}^{k_1}\otimes {\bf A}_{k_1} \\
&= \sum\limits_{k_1=0}^{L_1-1}\pi_{L_1}^{k_1}\otimes 
\left( \sum\limits_{n_2=0}^{L_2-1} \pi_{L_2}^{k_2}\otimes {\bf A}_{k_1 k_2} \right)\\
&=\sum\limits_{k_1=0}^{L_1-1}\pi_{L_1}^{k_1}\otimes 
\left( \sum\limits_{k_2=0}^{L_2-1} \pi_{L_2}^{k_2}\otimes 
\left(\sum\limits_{k_3=0}^{L_3-1} \pi_{L_3}^{k_3}\otimes {A}_{k_1 k_2 k_3} \right) \right),
\end{split}
\]
where ${\bf A}_{k_1}\in \mathbb{R}^{L_2\times L_3 \times m_0\times m_0}$, 
${\bf A}_{k_1 k_2} \in  \mathbb{R}^{L_3 \times m_0\times m_0}$, and 
$A_{k_1 k_2 k_3}\in \mathbb{R}^{m_0\times m_0}$.
Now the overlapping assumption ensures that the number of non-zero matrix blocks 
$A_{k_1 k_2 k_3}$ does exceed $(L_0+1)^3$.

Furthermore, the symmetric mass matrix, $S_{c_L}=\{{s}_{\mu \nu} \}\in \mathbb{R}^{N_b\times N_b}$, 
providing the Galerkin  representation of the identity operator reads as follows, 
\begin{equation*}  \label{Ident_pot}
 {s}_{\mu \nu}=  %\int_{\mathbb{R}^3}  \widehat{g}_k(x) \widehat{g}_m(x) dx = 
 \langle {\bf G}_\mu , {\bf G}_\nu \rangle 
 =\langle S^{(1)}{\bf g}_\mu^{(1)},{\bf g}_\nu^{(1)} \rangle 
 \langle S^{(2)} {\bf g}_\mu^{(2)},{\bf g}_\nu^{(2)} \rangle 
 \langle S^{(3)} {\bf g}_\mu^{(3)},{\bf g}_\nu^{(3)} \rangle, 
\quad 1\leq \mu, \nu \leq N_b,
\end{equation*}
% where now $\{\widehat{g}_k\}$ denotes the piecewise constant representation to 
% the respective Galerkin basis functions, and 
 where $N_b=m_0 L^3$.
It can be seen that in the periodic case the block structure in the "basis-tensor"  
${\bf G}_{\bf k} $ imposes
 the three-level block circulant structure in the mass matrix $S_{c_L}$, 
\begin{equation}\label{eqn:BC-mass}
S_{c_L}= \sum\limits_{k_1=0}^{L_1-1} \sum\limits_{k_2=0}^{L_2-1} \sum\limits_{k_3=0}^{L_3-1}
\pi_{L_1}^{k_1}\otimes \pi_{L_2}^{k_2}\otimes \pi_{L_3}^{k_3}\otimes S_{k_1 k_2 k_3}, 
\quad S_{k_1 k_2 k_3}\in \mathbb{R}^{m_0\times m_0}.
\end{equation}
By the previous arguments we conclude that 
$S_{k_1 k_2 k_3}=S^{(1)}_{k_1} S^{(2)}_{k_2} S^{(3)}_{k_3}$ implying the rank-$1$
separable representation in (\ref{eqn:BC-mass}).

Likewise, it is easy to see that the stiffness matrix representing the (local) Laplace 
operator in the periodic setting has the similar block circulant structure, 
\begin{equation}\label{eqn:BC-Laplace}
\Delta_{c_L}= \sum\limits_{k_1=0}^{L_1-1} \sum\limits_{k_2=0}^{L_2-1} \sum\limits_{k_3=0}^{L_3-1}
\pi_{L_1}^{k_1}\otimes \pi_{L_2}^{k_2}\otimes \pi_{L_3}^{k_3}\otimes B_{k_1 k_2 k_3}, 
\quad B_{k_1 k_2 k_3}\in \mathbb{R}^{m_0\times m_0},
\end{equation}
where the number of non-zero matrix blocks $B_{k_1 k_2 k_3}$ does not exceed $(L_0+1)^3$.
In this case the matrix block $B_{k_1 k_2 k_3}$ admits a rank-$3$ product factorization
inheriting the tri-term representation of the Laplacian. 
\end{proof}

In the Hartree-Fock calculations for lattice structured systems we deal with 
the multilevel, symmetric block circulant/Toeplitz matrices, where
the first-level blocks, $A_0,...,A_{L_1-1}$, may have further block structures.
In particular, Lemma \ref{lem:SparseCaseP} shows that 
the Galerkin approximation of the 3D Hartree-Fock core Hamiltonian $H$ in periodic setting 
leads to the symmetric, three-level block circulant matrix structures.

\begin{figure}[htbp]
\centering
\includegraphics[width=6.0cm]{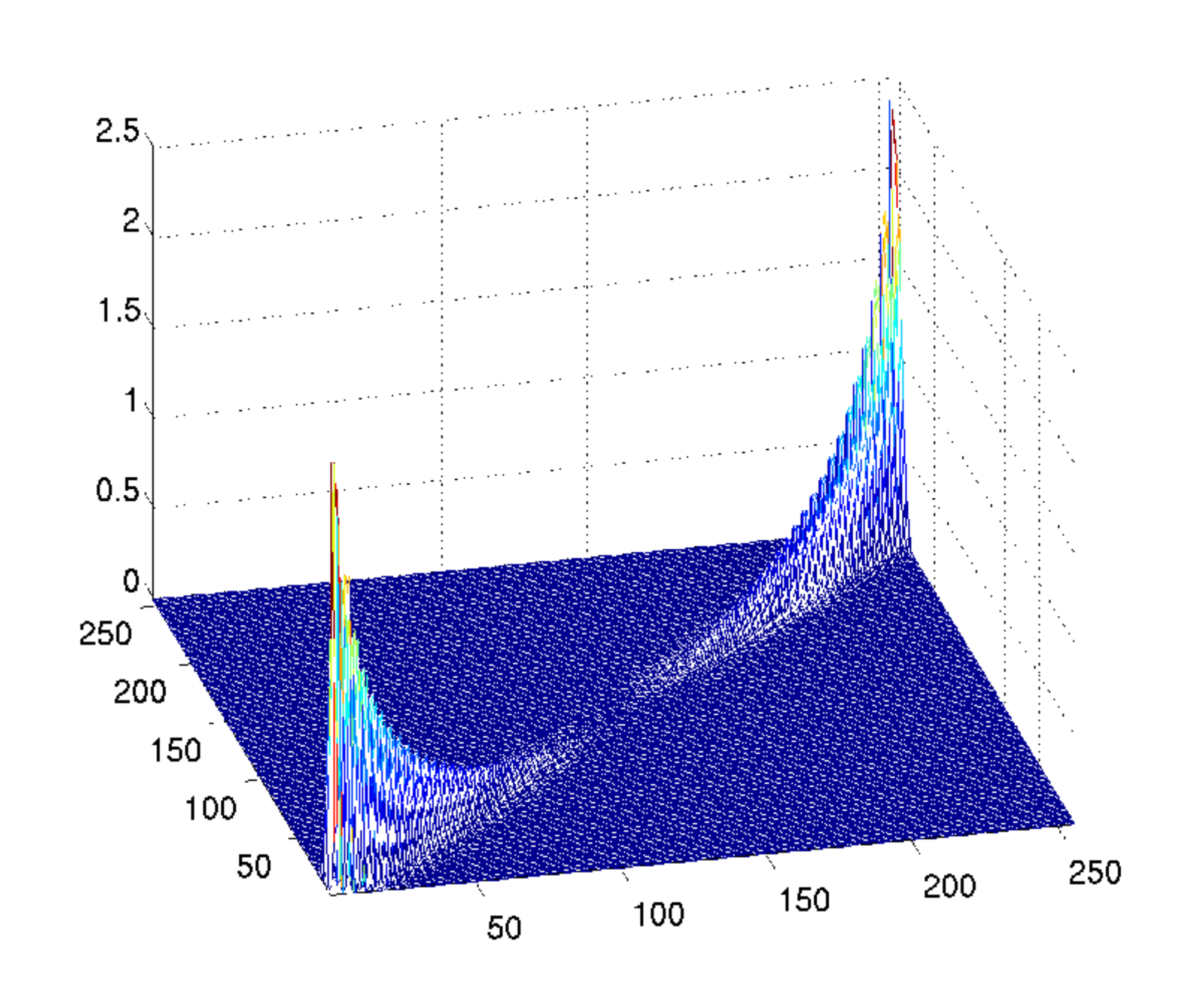} 
\caption{\small  
%Matrix $V_{c_L}$ in a supercell for $L_0=3, L=32$ (left). 
Difference between  matrices $V_{c_L}$ in periodic and non-periodic cases, $L=64$.
%Block-sparsity in the matrix $V_{c_L}$ in periodic case (right).
}
\label{fig:3DCorePerScellErr}  
\end{figure}
Figure \ref{fig:3DCorePerScellErr} shows the difference between matrices $V_{c_L}$ in periodic 
%(see \S\ref{ssec:Core_Ham_period} for more details) 
and non-periodic cases. 
%The relative norm of the difference is vanishing  as $L\to \infty$.

Figure \ref{fig:3DCoreHamPer} represents the block-sparsity in the core Hamiltonian 
matrix of a $L \times 1\times 1$ Hydrogen chain  in a box with $L=32$ (right), 
and  the matrix profile (left).

\begin{figure}[htbp]
\centering
\includegraphics[width=6.0cm]{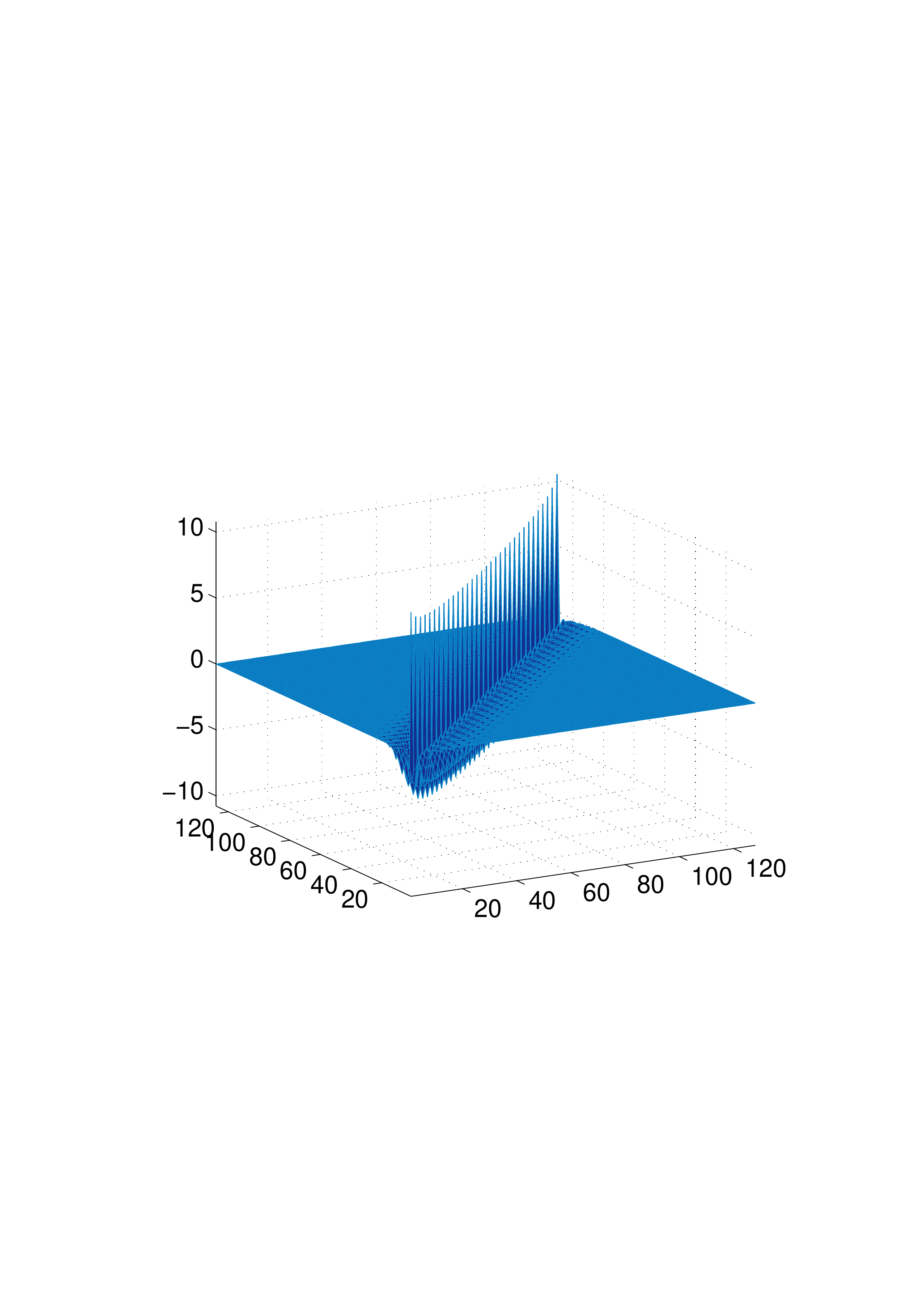}\quad \quad
\includegraphics[width=6.0cm]{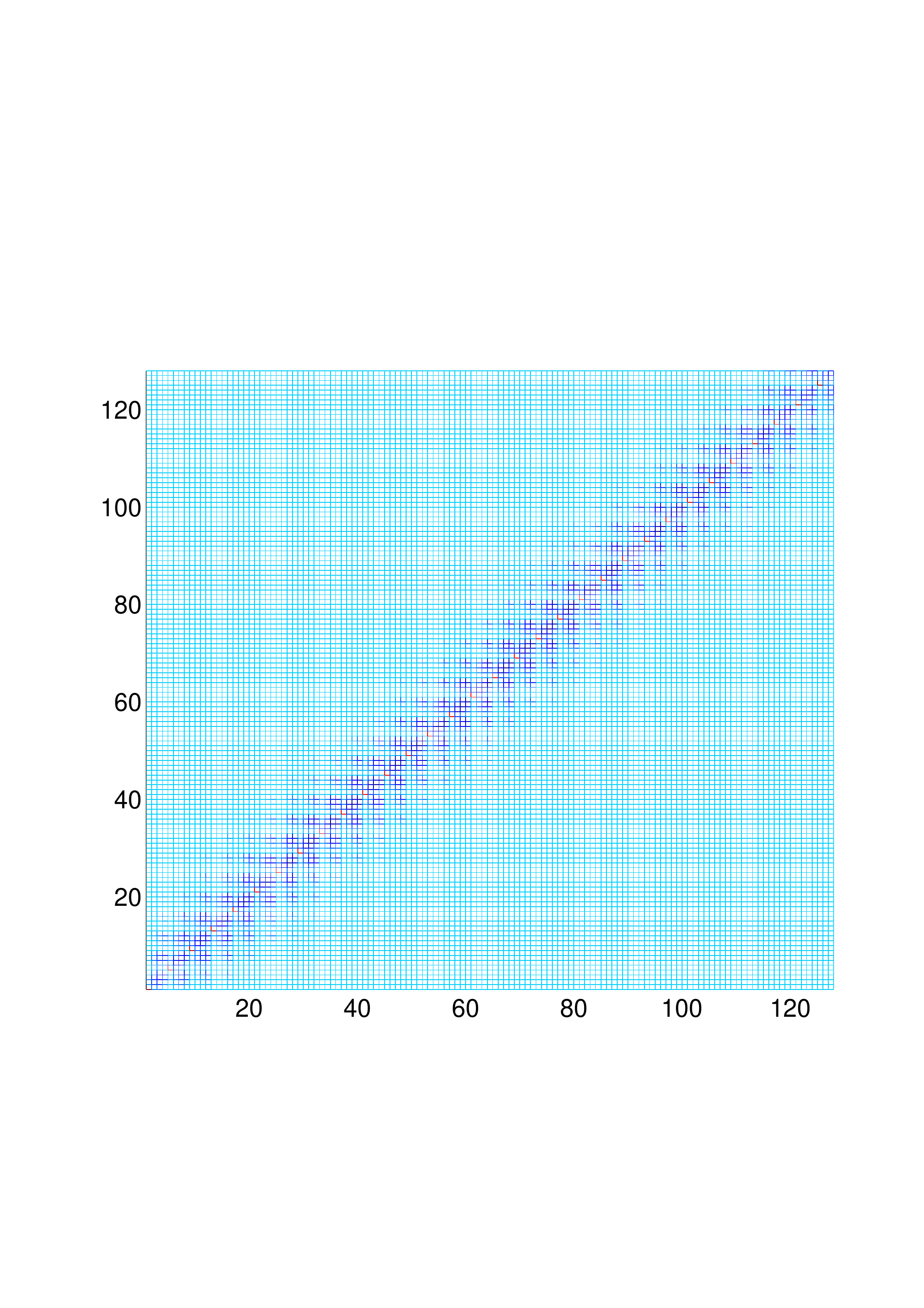}
\caption{\small  Block-sparsity in the matrix $V_{c_L}$  in a box for $L=32$ (right); 
matrix profile (left).}
\label{fig:3DCoreHamPer}  
\end{figure}

In the next section we discuss computational details of the FFT-based eigenvalue solver on
the example of 3D linear chain of molecules. 

\subsection{Spectral problems in different settings: complexity analysis}\label{ssec:Complexity_EigPr}

Combining the block circulant representations (\ref{eqn:BC-Core}), (\ref{eqn:BC-Laplace}) 
and (\ref{eqn:BC-mass}),
we are able to represent the eigenvalue problem for the Fock matrix 
in the Fourier space as follows
\begin{equation}\label{eqn:HF-FSpace}
 (\Delta_{c_L} + V_{c_L})U = \lambda S_{c_L} U, 
\end{equation}
where $U=(F_{\bf L}\otimes I_m) C$ and
\begin{equation*}\label{eqn:HF-FSpace_Repres}
\Delta_{c_L} + V_{c_L}=
 %\sum\limits^{L_1 -1}_{k_1=0}\sum\limits^{L_2 -1}_{k_2=0}\sum\limits^{L_3 -1}_{k_3=0}
 \sum\limits^{\bf L}_{{\bf k}=0}  D_{L_1}^{k_1}\otimes D_{L_2}^{k_2}\otimes D_{L_3}^{k_3}\otimes 
      (B_{k_1 k_2 k_3} + A_{k_1 k_2 k_3}), \quad
 S_{c_L} = 
%\sum\limits^{L_1 -1}_{k_1=0}\sum\limits^{L_2 -1}_{k_2=0}\sum\limits^{L_3 -1}_{k_3=0}
\sum\limits^{\bf L}_{{\bf k}=0}
D_{L_1}^{k_1}\otimes D_{L_2}^{k_2}\otimes D_{L_3}^{k_3} S_{k_1 k_2 k_3}, 
\end{equation*}
with the diagonal matrices $D_{L_\ell}^{k_\ell}\in \mathbb{R}^{L_\ell \times L_\ell}$, 
$\ell=1,2,3$ and the compact notation 
$$
\sum\limits^{\bf L}_{{\bf k}=0} = 
\sum\limits^{L_1 -1}_{k_1=0}\sum\limits^{L_2 -1}_{k_2=0}\sum\limits^{L_3 -1}_{k_3=0}.
$$
The equivalent block-diagonal form reads 
\begin{equation}\label{eqn:HF-FSpace-Block}
 \mbox{bdiag}_{m_0\times m_0} 
 \{{\cal T}_{\bf L}'[F_{\bf L} ({\cal T}_{\bf L}\widehat{B}) 
+ F_{\bf L} ({\cal T}_{\bf L}\widehat{A})] -
\lambda {\cal T}_{\bf L}'(F_{\bf L} [{\cal T}_{\bf L} \widehat{S})]  \} U =0.
%= \lambda\; \mbox{bdiag}_{m_0\times m_0} \{{\cal T}_{\bf n}'(F_{\bf n} 
%[{\cal T}_{\bf n} \widehat{S})]\} U.
\end{equation}
The block structure specified by Lemma \ref{lem:SparseCaseP} allows to apply the efficient
eigenvalue solvers via FFT based diagonalization in the framework of Hartree-Fock calculations
 with the numerical cost $O(m_0^2 L^d \log L)$.
\begin{figure}[htbp]
\centering
\includegraphics[width=6.0cm]{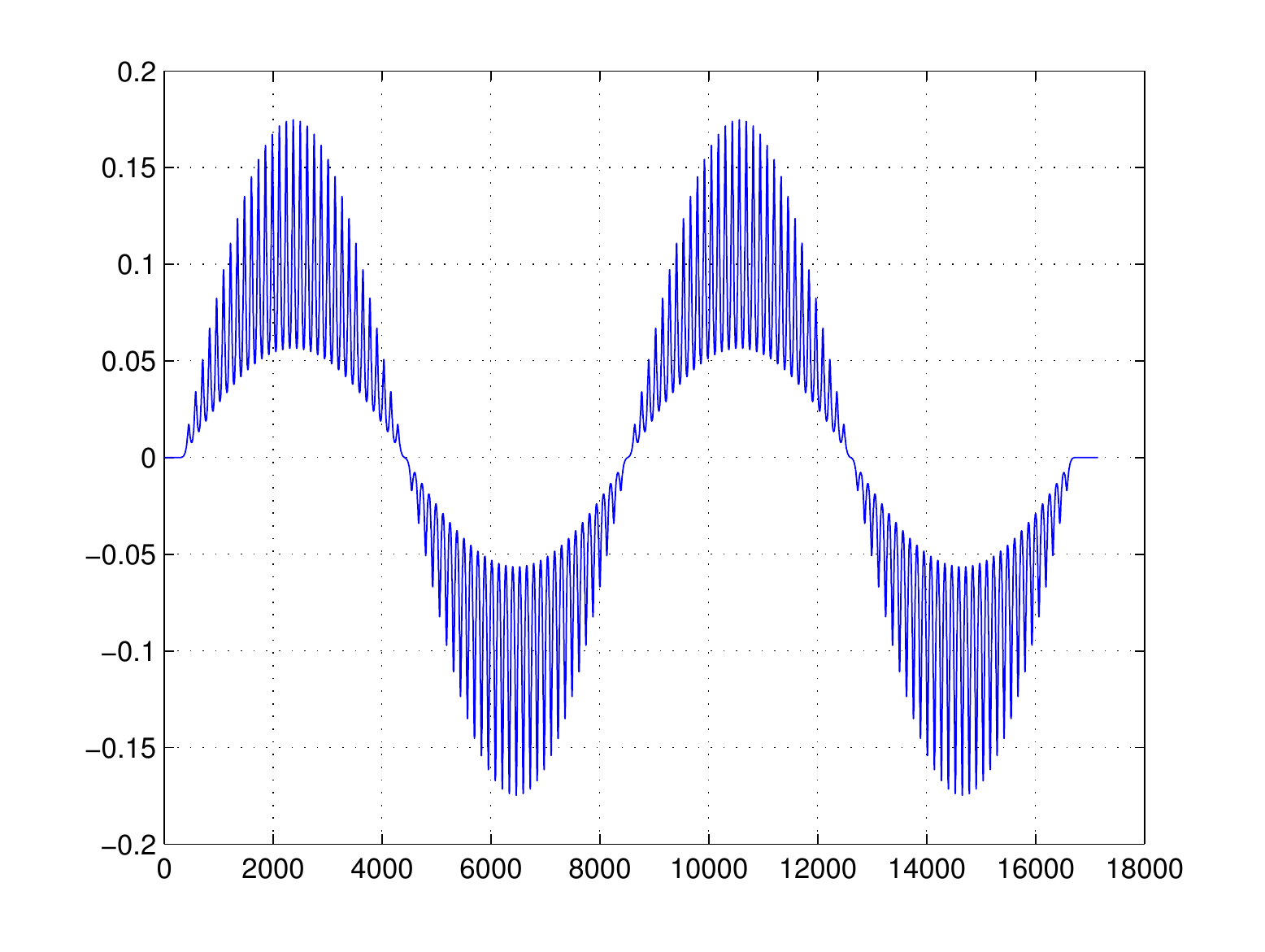} \quad
\includegraphics[width=6.0cm]{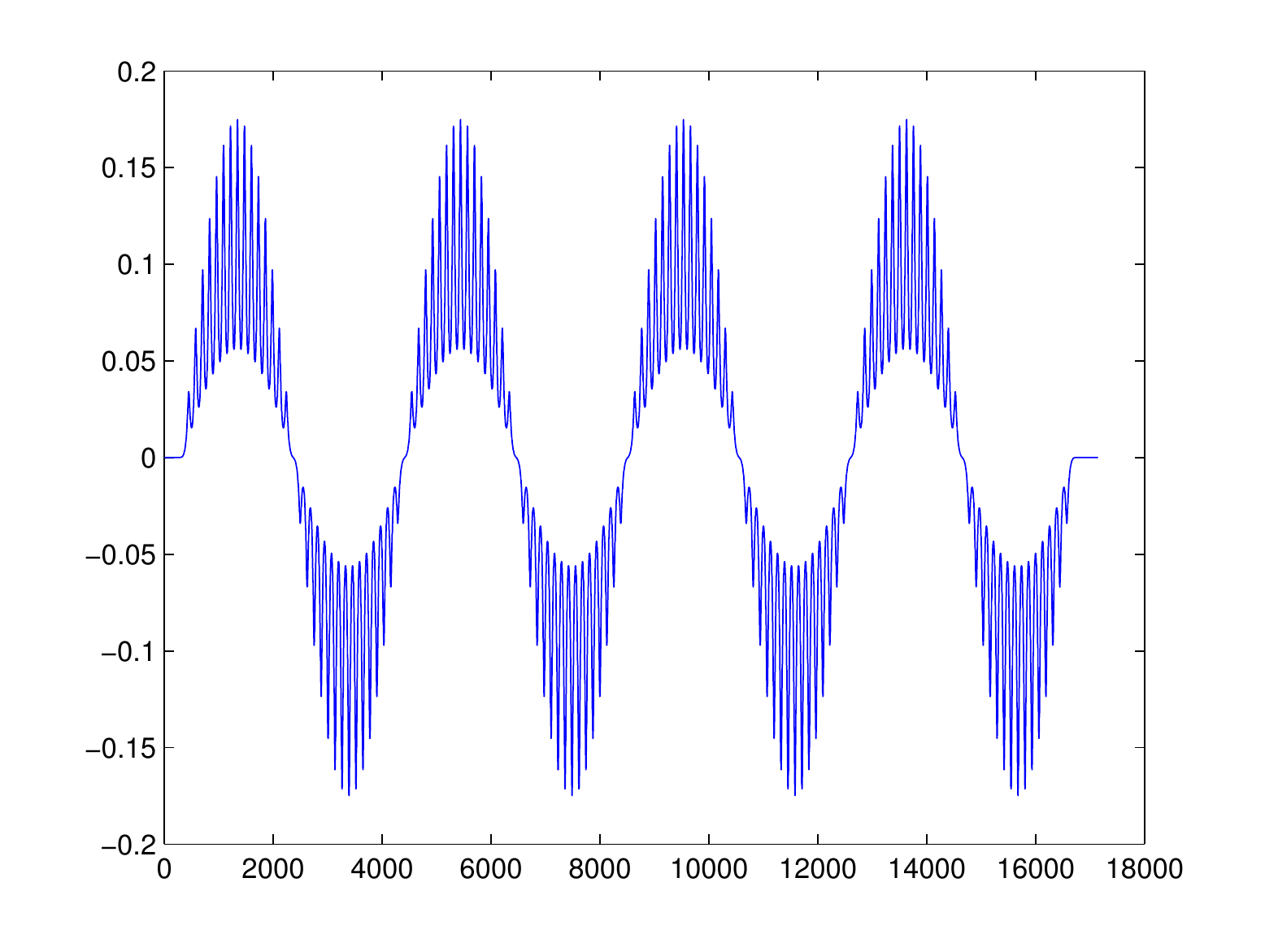}
\caption{\small  Molecular orbitals, i.e. the eigenvectors represented in GTO basis: 
the $4$th orbital (left), the $8$th orbital (right). 
%and the $300$th (right).
}
\label{fig:3DCoreEigVect}  
\end{figure}
Figure \ref{fig:3DCoreEigVect} visualizes molecular orbitals on fine spatial grid 
with $n=2^{14}$: the $4$th orbital (left), the $8$th orbital (right).
The eigenvectors are computed in GTO basis for $L\times 1\times 1$ system with $L=128$ and $m_0=4$.

\begin{remark}\label{prop:low_rank_coef}
The low-rank structure in the coefficients tensor mentioned above 
(see Section \ref{ssec:Tensor_bcirc}) allows to reduce the factor $L^d \log L$ to 
$L \log L$ for $d=2,3$. It was already observed in the proof of Lemma \ref{lem:SparseCaseP}
that the respective coefficients in the overlap and Laplacian Galerkin matrices
can be treated as the rank-$1$ and rank-$3$ tensors, respectively.
Clearly, the factorization rank for the nuclear part of the Hamiltonian does not exceed 
$R$. Hence, Theorem \ref{thm:tens_FFT}  can be applied in the generalized form.
\end{remark}

\begin{table}[htb]
\begin{center}%
\begin{tabular}
[c]{|r|r|r|r|r|r|r|r|r|r|}%
\hline
Matrix size $N_b=m_0 L $      & $512$  & $1024$ & $2048$ & $4096$ & $8192$ & $16384$ & $32768$ & $65536$ & $131072$ \\ 
 \hline 
Full EIG-solver         &  $0.67$    &    $5.49$  &   $48.6 $   &  $497.4$    & $--$ & $--$ & $--$ & $--$ & $--$\\
 \hline 
MBC diagonalization  &  $0.10$ &   $0.09$  &   $0.08 $   &  $0.14$    & $0.44$ & $1.5$ & $5.6$ & $22.9$ & $89.4$ \\
 \hline 
% Compression rate  & $2\cdot 10^6$ & $7\cdot 10^6$ & $2\cdot 10^7$ & $1\cdot 10^8$ & $4\cdot 10^8$  \\
%  \hline 
%  mesh size $h$  &  -  & $0.0031$ & $0.0013$ &  $5.9 \cdot 10^{-5} $ & $ $  \\
%  \hline 
 \end{tabular}
\caption{CPU times (sec.): full eig-solver  vs. FFT-based MBC diagonalization for 
 $L\times 1\times 1$ lattice system, and with $m_0=4$, $L=2^p$, $p=7,8,...,15$. }
\label{Table_Times_SupSvsPer}
\end{center}
\end{table}
Table \ref{Table_Times_SupSvsPer} compares CPU times in sec. (Matlab) for the 
full eigenvalue solver on a 3D $L\times 1\times 1$ lattice in a box, and for
the FTT-based MBC diagonalization in the periodic supercell, 
all computed for $m_0=4$, $L=2^p$ ($p=7,8,...,15$).
The number of basis functions (problem size) is given by $N_b=m_0 L$.

\begin{figure}[htb]
\centering
\includegraphics[width=6.0cm]{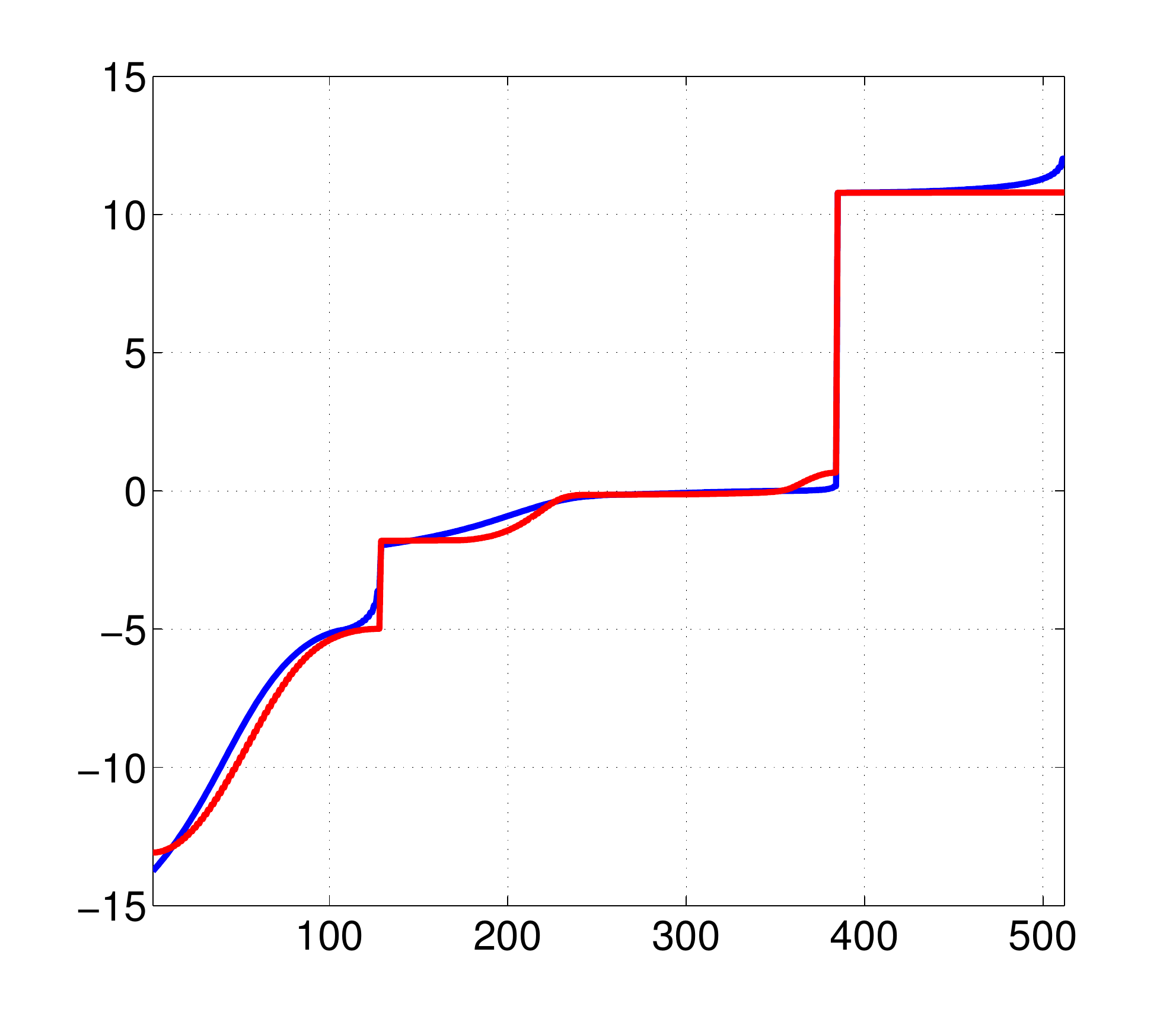}\quad  
\includegraphics[width=6.0cm]{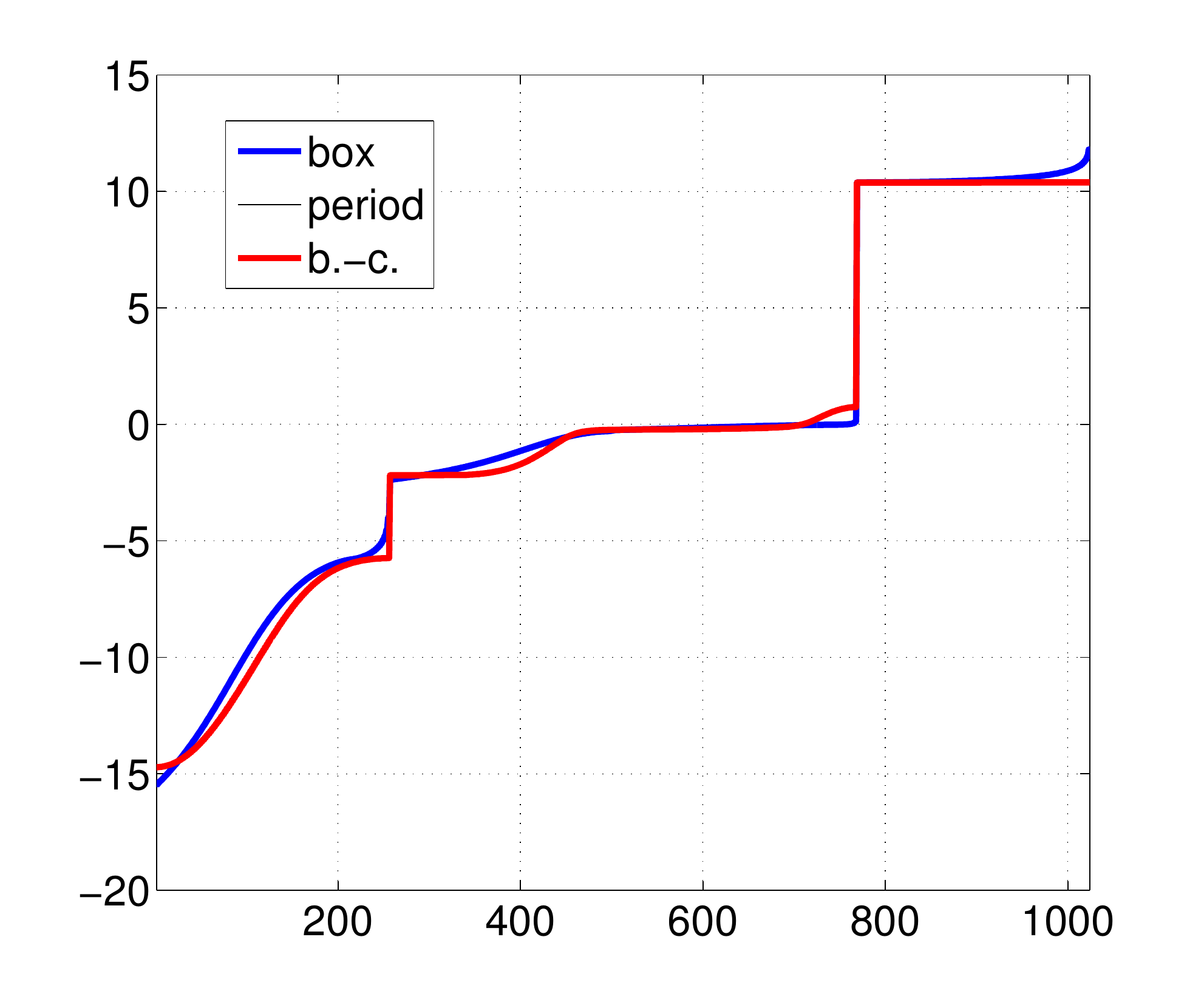}
\caption{\small  Spectrum of the core Hamiltonian in a box and in a periodic supercell
for $L=128,256$.}
\label{fig:3DCoreEig}  
\end{figure}

Figure \ref{fig:3DCoreEig} represents the spectrum of the core Hamiltonian 
in a box in comparison with that in a periodic supercell. We consider
different number of cells $L=128,256$, where $m_0=4$.  
The systematic difference between the eigenvalues in both cases can be observed
even at the limit of large $L$. This kind of spectral pollution effects have been discussed 
and theoretically analyzed in \cite{CancesDeLe:08}.
%  
% \begin{figure}[tbp]
% \centering
% \includegraphics[width=6.0cm]{Spectrum_EVP_512_b.pdf}\quad
%  \includegraphics[width=6.0cm]{Spectrum_EVP_512_p.pdf}
% \caption{\small  Spectrum of the core Hamiltonian for a $L\times 1\times 1$
% lattice with $L=256$, and $m_0=4$, in a box (left) and for periodic case (right).}
% \label{fig:3DSpectBand}  
% \end{figure}
% Figure \ref{fig:3DSpectBand} presents the spectral bands for 
% a $L\times 1\times 1$ lattice system in a box and in the periodic setting, 
% for $L=256$, and $m_0=4$.
 
\begin{figure}[tbp]
\centering
\includegraphics[width=7.0cm]{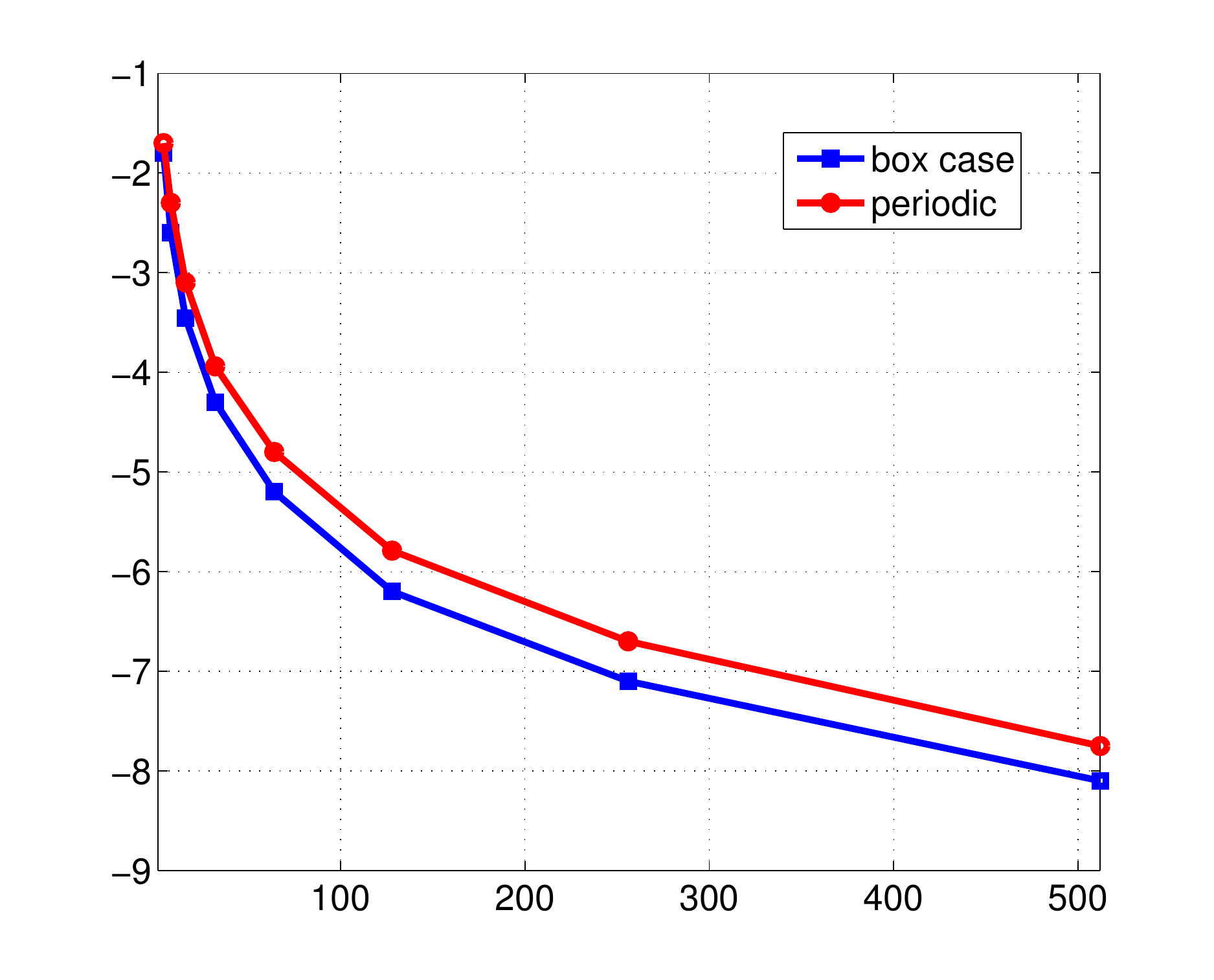} 
\caption{\small  Average energy per unit cell vs. $L$ for a $L\times 1\times 1$ lattice in a
3D rectangular ``tube``.}
\label{fig:3DAveEig}  
\end{figure}
Figure \ref{fig:3DAveEig} demonstrates the relaxation (for increasing $L$) 
of the average energy per unit cell
with $m_0=4$, for a $L\times 1\times 1$ lattice structure in a 3D rectangular "tube"   up
to $L=512$, for both periodic and open boundary conditions.

\section{Conclusions}\label{sec:Conclusions}

We introduced and analyzed the grid-based tensor approach to discretization 
and solution of the linearized Hartree-Fock equation in {\it ab initio} modeling 
of the lattice-structured molecular systems.
%In this presentation we consider the case of core Hamiltonian.
We describe methods and algorithms for banded (finite box) and block-circulant (periodic setting)
structured representation of the Fock matrix in GTO basis set (for the core Hamiltonian) 
and provide the numerical illustrations for both cases by implementing the algorithms in Matlab.

% Methods and algorithms described in this paper for banded and block-circulant 
% structured representation of the Fock matrix for the core Hamiltonian 
% are implemented in Matlab.

The sparse block structured representation to the Fock matrix combined with 
tensor techniques manifest several benefits: 
(a) the entries of the banded block structured Fock matrix are computed 
by 1D operations using low-rank CP tensors; 
(b) the storage size in the case of $L \times L \times L$ lattice is reduced to $O(L^d)\ll L^{2d}$;
(c) the $3$-level block-circulant Fock matrix in the periodic setting 
admits the low-rank tensor structure in the coefficients tensor, thus 
reducing the matrix diagonalization via conventional 3D FFT to the product of 1D Fourier transforms.

The main contributions include:

\begin{itemize}
 \item Fast computation of the Fock matrix by 1D matrix-vector operations 
by using low-rank tensors associated with a 3D spacial grid.
\item Analysis and numerical implementation of the multilevel 
banded/Toeplitz structure in the Fock matrix for the system in a box,
and the block circulant structure in periodic setting.   
\item Establishing the low-rank tensor structure in 
the diagonal blocks of the Fock matrix represented in the Fourier space, that
allows to reduce the storage size and diagonalization costs to $O(m_0^3 \,d\, L \log L)$.
%conventional 3D FFT to the product of 1D FFTs.
\item Numerical tests illustrating the computational efficiency of the tensor-structured 
methods applied to the linearized Hartree-Fock equation for finite lattices and 
in periodic suprcell.
Numerical experiments verify the theoretical results on the 
asymptotic complexity estimates of the proposed algorithms.
\end{itemize}

% Here we confine ourself to the case of core Hamiltonian part in the full Fock matrix (linear part
% in the Fock operator). 

The rigorous numerical study of the nonlinear  reduced 
Hartree-Fock eigenvalue problem for periodic and lattice-structured systems in a box
is a subject of the future research.

\section{Appendix} \label{sec_Append:ML-block-circToepl}

\subsection{Overview on block circulant matrices}
\label{ssec_Append:block-circ}

We recall that a one-level block circulant matrix  $A\in {\cal BC} (L,m_0)$ 
is defined by \cite{Davis},
\begin{equation}\label{eqn:block_c}
A=\operatorname{bcirc}\{A_0,A_1,...,A_{L-1}\}=
 \begin{bmatrix}
A_0     &  A_{L-1} &  \cdots  & A_{2} &  A_{1} \\
A_{1} &  A_0 &  \cdots  & \vdots & A_{2} \\
\vdots  &  \vdots & \ddots &  A_0 & \vdots  \\
A_{L-1}   &  A_{L-2}  & \cdots & A_{1} & A_0 \\
\end{bmatrix}
\in \mathbb{R}^{L m_0\times L m_0},
\end{equation}
where $A_k \in \mathbb{R}^{m_0\times m_0}$ for $k=0,1, \ldots ,L-1$, are matrices of general structure. 
The equivalent Kronecker product representation is defined by the associated matrix polynomial,
\begin{equation}\label{eqn:bcircPol}
 A= \sum\limits^{L -1}_{k=0} \pi^k \otimes A_k =:p_A(\pi),
\end{equation}
where $\pi=\pi_L\in \mathbb{R}^{L \times L}$ is the periodic downward shift 
(cycling permutation) matrix,
\begin{equation}\label{eqn:perShift}
\pi_L:=
 \begin{bmatrix}
0  &  0    &  \cdots &  0  &  1 \\ 
1  &  0   &  \cdots &  0  &  0 \\
\vdots  &\vdots & \ddots & \vdots & \vdots \\
0  &    \cdots  &  1 &  0  &  0 \\
0  &   0  &  \cdots &  1  &  0 \\
\end{bmatrix}
,
\end{equation}
and $\otimes$ denotes the Kronecker product of matrices. 

In the case $m_0=1$ a matrix $A\in {\cal BC} (L,1)$ defines a circulant matrix 
generated by its first column vector
$\widehat{a}=(a_0,...,a_{L-1})^T$. The associated scalar polynomial then reads
\[
 p_A(z):= a_0 + a_1 z + ... +a_{L-1} z^{L-1},
\]
so that (\ref{eqn:bcircPol}) simplifies to
\[
 A=p_A(\pi_L). 
\]
Let $\omega= \omega_L= \exp(-\frac{2\pi i}{L})$,  we denote by 
$$
F_L=\{f_{k\ell}\}\in \mathbb{R}^{L\times L}, \quad \mbox{with} \quad  
f_{k\ell}=\frac{1}{\sqrt{L}}\omega_L^{(k-1)(\ell-1)},\quad
k,l=1,...,L,
$$
the unitary matrix of Fourier transform. Since the shift matrix $\pi_L$ is diagonalizable 
in the Fourier basis, 
\begin{equation} \label{eqn:diagshift}
 \pi_L=F_L^\ast D_L F_L,\quad D_L= \mbox{diag}\{1,\omega,...,\omega^{L-1} \},
\end{equation}
the same holds for any circulant matrix,
\begin{equation} \label{eqn:circDiag}
 A = p_A(\pi_L) = F_L^\ast p_A(D_L) F_L,  
\end{equation}
where
\[
 p_A(D_L)=\mbox{diag}\{p_A(1),p_A(\omega),...,p_A(\omega^{L-1})\}= \mbox{diag}\{F_L a\}.
\]

Conventionally, we denote by $\mbox{diag}\{x\}$ a diagonal matrix generated by a vector $x$. 
Let $X$ be an $L  m_0\times m_0$ matrix obtained by concatenation of $m_0\times m_0$ 
matrices $X_k$, $k=0,...,L-1$, 
$X=\operatorname{conc}(X_0,...,X_{L-1})=[X_0,...,X_{L-1}]^T$. 
For example, the first block column in (\ref{eqn:block_c})
has the form $\operatorname{conc}(A_0,...,A_{L-1})$.
We denote by $\mbox{bdiag}\{X\}$ the $L m_0\times L m_0$ block-diagonal matrix of 
block size $L$ generated by $m_0\times m_0$ blocks $X_k$. 

It is known that similarly to the case of circulant matrices (\ref{eqn:circDiag}), 
block circulant matrix in ${\cal BC} (L,m_0)$ is unitary equivalent to the block diagonal one 
by means of Fourier transform via  representation (\ref{eqn:bcircPol}), see \cite{Davis}. 
In the following, we describe the block-diagonal representation of a matrix 
$A\in {\cal BC} (L,m_0)$ in the form that is convenient for generalization to the multi-level
block circulant matrices as well as for the description of FFT based implementation schemes. 
To that end, let us introduce the reshaping (folding) transform ${\cal T}_L$ that maps a 
$L m_0\times m_0$ matrix $X$ (i.e., the first block column in $A$) 
to $L\times m_0\times m_0$ tensor $B={\cal T}_L X$ by plugging 
the $i$th $m_0\times m_0$ block in $X$ into a slice $B(i,:,:)$. The respective unfolding returns 
the initial matrix $X={\cal T}_L' B$.
We denote by $\widehat{A}\in \mathbb{R}^{L m_0\times m_0}$ the first 
block column of a matrix $A\in {\cal BC} (L,m_0)$, with a shorthand notation 
$$
\widehat{A}=[A_0,A_1,...,A_{L-1}]^T,
$$ 
so that the $L\times m_0\times m_0$ tensor ${\cal T}_L \widehat{A}$ represents slice-wise
all generating $m_0\times m_0$ matrix blocks.  
%We further use the notation $e_j$ for the $j$th Euclidean basis vector.

\begin{proposition}\label{prop:eig_bcmatr}
For $A\in {\cal BC} (L,m_0)$ we have
\begin{equation} \label{eqn:bcircDiag}
 A= (F_L^\ast \otimes I_{m_0}) \operatorname{bdiag} 
\{ \bar{A}_0, \bar{A}_1,\ldots , \bar{A}_{L-1}\}
(F_L \otimes I_{m_0}),
\end{equation}
where 
\[
 \bar{A}_j = \sum\limits^{L -1}_{k=0} \omega_L^{jk} A_k \in \mathbb{C}^{m_0 \times m_0},
\]
can be recognized as the $j$-th $m_0\times m_0$ matrix block in block column 
${\cal T}_L'(F_L ({\cal T}_L \widehat{A}))$, such that 
\[
  \left[ \bar{A}_0, \bar{A}_1,\ldots , \bar{A}_{L-1}\right]^T = 
 {\cal T}_L'(F_L ({\cal T}_L \widehat{A})).
\]
A set of eigenvalues $\lambda$ of $A$ is then given by
\begin{equation}\label{eqn:lambdAbc}
\{\lambda | Ax = \lambda x, \; x\in \mathbb{C}^{L m_0}\}=
\bigcup\limits_{j=0}^{L-1} \{ \lambda |\bar{A}_j u = \lambda u, \; u \in \mathbb{C}^{m_0}  \}.
 \end{equation}
The eigenvectors corresponding to the spectral sets 
$$
\Sigma_j= \{\lambda_{j, m} |\bar{A}_j u_{j,m} = \lambda_{j,m} u_{j,m}, 
\; u_{j,m} \in \mathbb{C}^{m_0}\}, 
\quad j= 0,1,\ldots , L-1,\quad m=1,...,m_0,
$$ 
can be represented in the form
\begin{equation}\label{eqn:eigvecA}
 U_{j,m}=(F_L^\ast \otimes I_{m}) \bar{U}_{j,m},\quad \mbox{where} \quad 
\bar{U}_{j,m}= E_{[j]} \operatorname{vec}\, [u_{0,m},u_{1,m},...,u_{L-1,m}],
\end{equation}
% where 
% \[
%  \tilde{U}_{jm}= D_j vec\, [u_{0,m},u_{1,m},...,u_{(p-1),m}],
% \]
with $E_{[j]}=\operatorname{diag}\{e_j\}\otimes I_{m_0} \in \mathbb{R}^{L m_0\times L m_0} $, and
$e_j\in \mathbb{R}^{L}$ being the $j$th Euclidean basis vector.
\end{proposition}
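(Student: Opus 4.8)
\textbf{Proof proposal for Proposition \ref{prop:eig_bcmatr}.}

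The plan is to build everything on the Kronecker-polynomial representation (\ref{eqn:bcircPol}) together with the diagonalization of the shift matrix (\ref{eqn:diagshift}). First I would substitute $\pi_L = F_L^\ast D_L F_L$ into $A = \sum_{k=0}^{L-1}\pi_L^k\otimes A_k$, using $(F_L^\ast D_L F_L)^k = F_L^\ast D_L^k F_L$ and the mixed-product rule for Kronecker products, to obtain
\[
A = \sum_{k=0}^{L-1}(F_L^\ast D_L^k F_L)\otimes A_k
  = (F_L^\ast\otimes I_{m_0})\Bigl(\sum_{k=0}^{L-1}D_L^k\otimes A_k\Bigr)(F_L\otimes I_{m_0}).
\]
Since $D_L^k = \operatorname{diag}\{1,\omega_L^k,\dots,\omega_L^{(L-1)k}\}$, the inner sum $\sum_k D_L^k\otimes A_k$ is block-diagonal with $m_0\times m_0$ blocks $\bar A_j = \sum_{k=0}^{L-1}\omega_L^{jk}A_k$ for $j=0,\dots,L-1$; this is precisely $\operatorname{bdiag}\{\bar A_0,\dots,\bar A_{L-1}\}$, giving (\ref{eqn:bcircDiag}). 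The identification of the block column $[\bar A_0,\dots,\bar A_{L-1}]^T$ with ${\cal T}_L'(F_L({\cal T}_L\widehat A))$ is then just unwinding the reshaping notation: ${\cal T}_L\widehat A$ stacks the blocks $A_k$ slice-wise, applying $F_L$ column-wise computes exactly the discrete Fourier sums $\sum_k \omega_L^{jk}A_k$ entrywise, and ${\cal T}_L'$ folds the result back into a block column.

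For the spectral statement (\ref{eqn:lambdAbc}), I would argue that $A$ is unitarily similar (via $F_L\otimes I_{m_0}$) to $\operatorname{bdiag}\{\bar A_0,\dots,\bar A_{L-1}\}$, and the spectrum of a block-diagonal matrix is the union of the spectra of its diagonal blocks; this gives the claimed union over $j$ immediately. For the eigenvectors (\ref{eqn:eigvecA}): if $\bar A_j u_{j,m} = \lambda_{j,m} u_{j,m}$, then the vector in $\mathbb{C}^{Lm_0}$ that carries $u_{j,m}$ in the $j$th block slot and zeros elsewhere — which is exactly $E_{[j]}\operatorname{vec}[u_{0,m},\dots,u_{L-1,m}]$ up to the bookkeeping of which $u$'s are placed — is an eigenvector of the block-diagonal matrix with eigenvalue $\lambda_{j,m}$; pulling it back through $(F_L^\ast\otimes I_{m_0})$ yields the eigenvector $U_{j,m}$ of $A$. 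One checks $(F_L^\ast\otimes I_{m_0})^{-1}A(F_L^\ast\otimes I_{m_0}) = \operatorname{bdiag}\{\bar A_j\}$ so that $A\,U_{j,m} = \lambda_{j,m}U_{j,m}$.

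I do not expect a serious obstacle here — the result is classical (Davis) and the computation is essentially the mixed-product property of $\otimes$ applied $L$ times. The one place requiring care is purely notational: matching the reshaping operators ${\cal T}_L$, ${\cal T}_L'$ and the selector $E_{[j]}=\operatorname{diag}\{e_j\}\otimes I_{m_0}$ to the informal "place $u_{j,m}$ in block $j$" description, and being consistent about whether $F_L$ or $F_L^\ast$ appears (i.e. the sign convention in $\omega_L = \exp(-2\pi i/L)$ and the fact that $F_L$ is symmetric so $F_L^T = F_L$). I would state these conventions explicitly at the start and then let the Kronecker algebra run.
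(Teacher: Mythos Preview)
Your proposal is correct and follows essentially the same route as the paper's proof: substitute $\pi_L = F_L^\ast D_L F_L$ into the polynomial representation (\ref{eqn:bcircPol}), pull out $(F_L^\ast\otimes I_{m_0})$ and $(F_L\otimes I_{m_0})$ via the mixed-product rule, and read off the block-diagonal structure from $\sum_k D_L^k\otimes A_k$; the paper then dismisses the eigenvalue and eigenvector claims as ``straightforward calculations with block-diagonal matrices,'' which is exactly the unitary-similarity argument you sketch.
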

\begin{proof}
We combine representations (\ref{eqn:bcircPol}) and (\ref{eqn:diagshift}) to obtain
\begin{align}\label{eqn:Bcircdiag}
A & = \sum\limits^{L -1}_{k=0} \pi^k \otimes A_k = 
      \sum\limits^{L -1}_{k=0} (F_L^\ast D^k F_L) \otimes A_k \\ \nonumber
  & = (F_L^\ast \otimes I_{m_0}) (\sum\limits^{L -1}_{k=0}  D^k  
       \otimes A_k)(F_L \otimes I_{m_0}) \\ \nonumber
  & = (F_n^\ast \otimes I_m)(\sum\limits^{L -1}_{k=0} 
      \mbox{bdiag}\{A_k,\omega_L^k A_k,...,\omega_L^{k(L-1)}A_{k} \}  )  
       (F_L \otimes I_{m_0})\\ \nonumber
  & = (F_L^\ast \otimes I_{m_0})  
      \mbox{bdiag}\{\sum\limits^{L -1}_{k=0} A_k,\sum\limits^{L -1}_{k=0} \omega_L^k A_k,...,
      \sum\limits^{L -1}_{k=0} \omega_L^{k(L-1)}A_{k} \}    (F_L \otimes I_{m_0})\\ \nonumber
  & = (F_L^\ast \otimes I_{m_0}) \mbox{bdiag}_{m_0 \times m_0} 
  \{{\cal T}_L'(F_L ({\cal T}_L \widehat{A}))\} (F_L \otimes I_{m_0}),  \nonumber
\end{align}
where the final step follows by the definition of FT matrix and by the construction of 
${\cal T}_L$. 
The structure of eigenvalues and eigenfunctions  then follows by straightforward 
calculations with block-diagonal matrices.
\end{proof}

The next statement describes  the block-diagonal form for a class of symmetric BC 
matrices, ${\cal BC}_s (L,m_0)$, 
that is a simple corollary of \cite{Davis}, Proposition \ref{prop:eig_bcmatr}. 
In this case we have $A_0=A_0^T$, and $A_k^T=A_{L-k}$, $k=1,...,L-1$.
\begin{corollary}\label{cor:eig_symbcmatr}
 Let $A\in {\cal BC}_s (L,m_0)$  be symmetric, then $A$ is unitary similar
to a Hermitian block-diagonal matrix, i.e., $A$ is of the form 
\begin{equation}\label{eqn:F_bc}
 A= (F_L \otimes I_{m_0}) \operatorname{bdiag} (\tilde{A}_0, \tilde{A}_1,\ldots , \tilde{A}_{L-1})
(F_L^\ast \otimes I_{m_0}),
\end{equation}
where $I_{m_0}$ is the $m_0\times m_0$ identity matrix.
The matrices $\tilde{A}_j \in \mathbb{C}^{m_0\times m_0}$, $j= 0,1,\ldots , L-1$, are 
defined for even $n\geq 2$ as
\begin{equation}\label{eqn:symBc}
 \tilde{A}_j =A_0 + \sum\limits^{L/2-1}_{k=1} (\omega^{kj}_L A_k + 
\widehat{\omega}^{kj}_L A^T_k) + (-1)^j A_{L/2}.
\end{equation}
\end{corollary}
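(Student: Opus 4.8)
The plan is to derive Corollary~\ref{cor:eig_symbcmatr} as a specialization of Proposition~\ref{prop:eig_bcmatr} by exploiting the symmetry relations $A_0 = A_0^T$ and $A_k^T = A_{L-k}$ for $k = 1,\ldots,L-1$ that characterize a symmetric block circulant matrix $A \in {\cal BC}_s(L,m_0)$. First I would invoke (\ref{eqn:bcircDiag}), which already gives the block-diagonal form with diagonal blocks $\bar{A}_j = \sum_{k=0}^{L-1}\omega_L^{jk} A_k$. The only work remaining is to (i) re-index this sum using the symmetry relations to exhibit the stated closed form (\ref{eqn:symBc}), and (ii) verify that each resulting block $\tilde A_j$ is Hermitian, which is what upgrades ``block-diagonal'' to ``Hermitian block-diagonal.''

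For step (i), in the sum $\bar A_j = A_0 + \sum_{k=1}^{L-1}\omega_L^{jk}A_k$ I would split off the term $k=L/2$ (legitimate since $L$ is assumed even), note that $\omega_L^{j(L/2)} = (-1)^j$, and pair each remaining index $k \in \{1,\ldots,L/2-1\}$ with its complement $L-k \in \{L/2+1,\ldots,L-1\}$. Using $A_{L-k} = A_k^T$ and $\omega_L^{j(L-k)} = \omega_L^{-jk} = \widehat\omega_L^{\,jk}$ (the conjugate, since $|\omega_L|=1$), the pair contributes $\omega_L^{jk} A_k + \widehat\omega_L^{\,jk} A_k^T$, which is exactly the summand in (\ref{eqn:symBc}). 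Collecting the pieces yields $\tilde A_j = \bar A_j$, so the block-diagonal factorization is the same one already established, and I would just observe that writing $F_L$ and $F_L^\ast$ in the opposite order from (\ref{eqn:bcircDiag}) is harmless because $(F_L^\ast \otimes I_{m_0})$ and $(F_L \otimes I_{m_0})$ are mutually inverse unitaries, so one may relabel the diagonal blocks accordingly (equivalently, use $\overline{F_L} = F_L^{\mathsf T}$ and $\pi_L^T = \pi_L^{-1}$).

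For step (ii), Hermiticity of $\tilde A_j$ follows by inspection of (\ref{eqn:symBc}): $A_0^\ast = A_0^T = A_0$ since $A_0$ is real symmetric; the term $(-1)^j A_{L/2}$ is real symmetric for the same reason ($A_{L/2}^T = A_{L-L/2} = A_{L/2}$); and each paired term satisfies $(\omega_L^{kj} A_k + \widehat\omega_L^{kj} A_k^T)^\ast = \widehat\omega_L^{kj} A_k^T + \omega_L^{kj} A_k$ (using that the $A_k$ are real), i.e.\ it equals its own conjugate transpose. Hence $\tilde A_j^\ast = \tilde A_j$, so $\operatorname{bdiag}(\tilde A_0,\ldots,\tilde A_{L-1})$ is Hermitian and $A$ is unitarily similar to it. This also re-confirms, consistently with Proposition~\ref{prop:eig_bcmatr}, that the spectrum of $A$ is the union of the spectra of the $m_0 \times m_0$ Hermitian blocks $\tilde A_j$, so the eigenvalues are real.

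I do not anticipate a genuine obstacle here, since the statement is explicitly flagged as ``a simple corollary of \cite{Davis}'' and of Proposition~\ref{prop:eig_bcmatr}; the only point demanding a little care is the bookkeeping in the index pairing $k \leftrightarrow L-k$ and the correct placement of the complex conjugate $\widehat\omega_L$ versus $\omega_L$ — getting the roots of unity on the right factor so that the transpose relation $A_{L-k}=A_k^T$ lands the summand in Hermitian-symmetric form. A secondary cosmetic point is reconciling the order of $F_L$ and $F_L^\ast$ between (\ref{eqn:bcircDiag}) and (\ref{eqn:F_bc}), which I would dispose of in a single sentence as noted above.
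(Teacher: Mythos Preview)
Your proposal is correct and follows precisely the route the paper indicates: the paper does not give a detailed proof but simply states that the result ``is a simple corollary of \cite{Davis}, Proposition~\ref{prop:eig_bcmatr}'' together with the relations $A_0=A_0^T$, $A_k^T=A_{L-k}$, and your argument is exactly the natural unpacking of that remark. Your handling of the index pairing $k\leftrightarrow L-k$, the $(-1)^j$ middle term, the Hermiticity check, and the cosmetic $F_L$ versus $F_L^\ast$ ordering are all on point.
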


Corollary \ref{cor:eig_symbcmatr} combined with Proposition \ref{prop:eig_bcmatr} describes
a simplified  structure of spectral data in the symmetric case.
Notice that the above representation imposes the symmetry of each 
real-valued diagonal blocks $\tilde{A}_j \in \mathbb{R}^{m_0\times m_0}, \; j= 0,1,\ldots , L-1$,
in (\ref{eqn:F_bc}).

\subsection{Multilevel block circulant/Toeplitz matrices}
\label{ssec_Append:ML_block-circ}

Furthermore,  we describe the extension of (one-level) block circulant matrices to multilevel structure.
First, we recall the main notions of multilevel block circulant (MBC) matrices 
with the particular focus on the three-level case. 
%Given $q\in \mathbb{N}$, and 
%Given a multi-index ${\bf k}=(k_1\; k_2 \; k_3)\in \mathbb{N}^3$, with $k_\ell =1,\ldots, L_\ell$,
Given a multi-index ${\bf L}=(L_1, L_2, L_3)$,
we denote $|{\bf L}|=L_1\, L_2\, L_3$. 
A matrix class ${\cal BC} (d,{\bf L},m_0)$ ($d=1,2,3$)
of $d$-level block circulant matrices can be introduced by the following recursion.
\begin{definition}\label{def:Bcirc}
%Given ${\bf L}=(L,1,1) \in \mathbb{N}^d$, then
For $d=1$, define a class of one-level block circulant matrices by
${\cal BC} (1,{\bf L},m)\equiv {\cal BC} (L_1,m)$ (see \S\ref{ssec_Append:block-circ}), 
where ${\bf L}=(L_1,1,1)$. For $d=2$, we say that a matrix 
$A\in \mathbb{R}^{|{\bf L}|m_0 \times |{\bf L}| m_0}$ belongs to a class
${\cal BC} (d,{\bf L},m_0)$ if
\[
 A = \operatorname{bcirc}(A_1,...,A_{L_1})\quad \mbox{with}\quad 
 A_j\in {\cal BC}(d-1,{\bf L}_{[1]},m_0),\; j=1,...,L_1,
\]
where ${\bf L}_{[1]}=(L_2,L_3)\in \mathbb{N}^{d-1} $. Similar recursion applies to the case $d=3$.
\end{definition}

Likewise to the case of one-level BC matrices, it can be seen that a matrix 
$A \in {\cal BC} (d,{\bf L},m_0)$, $d=1,2,3$, 
of size $|{\bf L}| m_0 \times  |{\bf L}| m_0$ is completely defined (parametrized) by a $d$th order 
matrix-valued tensor 
${\bf A}=[A_{k_1 ... k_d}]$ of size $L_1\times ... \times L_d $,
($k_\ell=1,...,L_\ell$, $\ell=1,...,d$), with $m_0\times m_0$ matrix entries $A_{k_1 ... k_d}$,  
obtained by folding of the generating first column vector in $A$.

Recall that a symmetric block Toeplitz matrix  $A\in {\cal BT}_s (L,m_0)$ 
is defined by \cite{Davis},
\begin{equation}\label{eqn:block_SToepl}
A=\operatorname{BToepl}_s\{A_0,A_1,...,A_{L-1}\}=
 \begin{bmatrix}
A_0     &  A_{1}^T &  \cdots  & A_{L-2}^T &  A_{L-1}^T \\
A_{1} &  A_0 &  \cdots  & \vdots & A_{L-2}^T \\
\vdots  &  \vdots & \ddots &  A_0 & \vdots  \\
A_{L-1}   &  A_{L-2}  & \cdots & A_{1} & A_0 \\
\end{bmatrix}
\in \mathbb{R}^{L m_0\times L m_0},
\end{equation}
where $A_k \in \mathbb{R}^{m_0\times m_0}$ for $k=0,1, \ldots ,L-1$, 
is a matrix of a general structure.

Similar to Definition \ref{def:Bcirc}, a matrix class ${\cal BT}_s (d,{\bf L},m_0)$ 
of symmetric $d$-level block Toeplitz matrices can be introduced by the following recursion.
\begin{definition}\label{def:BToepl}
For $d=1$, ${\cal BT}_s (1,{\bf L},m_0)\equiv {\cal BT}_s (L_1,m_0)$ is the class 
of one-level symmetric block circulant matrices with ${\bf L}=(L_1,1,1)$.
For $d=2$ we say that a matrix 
$A\in \mathbb{R}^{|{\bf L}|m \times |{\bf L}| m_0}$ belongs to a class
${\cal BT}_s (d,{\bf L},m_0)$ if
\[
 A= \operatorname{btoepl}_s(A_1,...,A_{L_1})\quad 
 \mbox{with}\quad A_j\in {\cal BT}_s(d-1,{\bf L_{[1]}},m_0),\; j=1,...,L_1.
\]
Similar recursion applies to the case $d=3$.
%where ${\bf n_{[1]}}=(n_2,n_3,...,n_d)\in \mathbb{N}^{d-1}$.
\end{definition} 

\subsection{ Rank-structured tensor formats}
\label{ssec_Append:Tensors}

We consider a tensor of order $d$, as a multidimensional array numbered 
by a $d$-tuple index set, ${\bf A}=[a_{i_1,...,i_d}]\; \in 
\mathbb{R}^{n_1 \times \ldots \times n_d}$. 
A tensor is an element of a linear vector space equipped with the Euclidean scalar product.
In particular, a tensor with equal sizes $n_\ell =n$, $\ell=1,\ldots d$, is 
denoted as $n^{\otimes d}$ tensor. 
The  required storage for entry-wise representation of tensors
scales exponentially in the dimension, $n^d$, (the so-called ''curse of dimensionality``). 
To get rid of exponential scaling in the dimension, one can apply  the 
rank-structured separable representations of multidimensional tensors.

The rank-$1$ canonical tensor, $ {\bf A} = {\bf u}^{(1)}\otimes ... 
\otimes {\bf u}^{(d)}\in \mathbb{R}^{n_1 \times \ldots \times n_d}$, with entries 
$a_{i_1,\ldots i_d}=  a^{(1)}_{i_1}\cdot \cdot\cdot a^{(d)}_{i_d} $ 
requires only $d n$ numbers to store it. 
A tensor in the $R$-term canonical format (CP tensors) is defined by the parametrization
 \begin{equation}
\label{CP_form}
   {\bf A} = \sum\limits_{k =1}^{R} c_k
   {\bf u}_k^{(1)}  \otimes \ldots \otimes {\bf u}_k^{(d)},
 \quad  c_k \in \mathbb{R},
\end{equation}
where $u_k^{(\ell)}$ are normalized vectors, and $R$ is called the canonical rank of a tensor.
The storage size is bounded by $d n R \ll n^d$.

Given the rank parameter ${\bf r}=(r_1,...,r_d)$,
a tensor in the rank-$\bf r$ Tucker format is defined by the parametrization
\[
  {\bf A} ={\sum}_{\nu_1 =1}^{r_1}\ldots
{\sum}^{r_d}_{{\nu_d}=1} \beta_{\nu_1, \ldots ,\nu_d}
\,  {\bf v}^{(1)}_{\nu_1} \otimes \ldots \otimes {\bf v}^{(d)}_{\nu_d},\quad \ell=1,\ldots,d, 
\]
completely specified by a set of orthonormal vectors 
${\bf v}^{(\ell)}_{\nu_\ell}\in \mathbb{R}^{n_\ell}$,
and the Tucker core tensor $\boldsymbol{\beta}=[\beta_{\nu_1,...,\nu_d}]$.
The storage demand is bounded by $|{\bf r}| + (r_1+\ldots +r_d)n$.

The remarkable approximating properties of the Tucker and canonical tensor decomposition 
applied to the wide class of function related tensors  were revealed in 
\cite{Khor1:06,HaKhtens:04I,KhKh3:08}, 
promoting using  tensor tools for the numerical treatment of the multidimensional PDEs.
It was proved
for the CP/Tucker decomposition of some classes of function related tensors 
that the rank-$\bf r$ Tucker approximation with $ r=O(\log n)$, i.e., $r_\ell \ll n $, 
provides the exponentially small error of order $e^{-\alpha  r}$, \cite{Khor1:06}
(here we discuss for simplicity the equal rank decompositions, $r_\ell=r$).
% Thus, validity and efficiency of the Tucker-type approximations and 
% combined tensor formats to  function related tensors in many dimensions 
% were demonstrated in \cite{Khor1:06}. 

In the case of many spacial dimensions the product type tensor formats provide 
the stable rank-structured approximation.
The matrix-product states (MPS) decomposition was long since used 
in quantum chemistry and quantum information theory, see the survey paper \cite{Scholl:11}.
 % \cite{White:93,Cirac_TC:04}.
The particular case of MPS representation is called a tensor train (TT) format 
\cite{OselTy:09}.
The quantics-TT (QTT) tensor approximation method 
for functional $n$-vectors  was introduced in \cite{KhQuant:09} and shown to 
provide the logarithmic complexity, $O(d\log n)$, on the wide class of generating functions.
%The matrix case was presented in \cite{Osel-TT-LOG:09}.
Furthermore, a combination of different tensor formats proved to be successful in 
the numerical solution of the multidimensional PDEs \cite{KhorSurv:10,DoKh:13}.

\begin{footnotesize}

\end{footnotesize}

\end{document}